	\numberwithin{equation}{section}
	\def\MR#1{\href{http://www.ams.org/mathscinet-getitem?mr=#1}{MR-#1}}
	\def\ARXIV#1{\href{https://arxiv.org/abs/#1}{arXiv:#1}}
	\newtheorem{thm}{Theorem}[section]
	\newtheorem{lem}[thm]{Lemma}
	\newtheorem{prop}[thm]{Proposition}
	\newtheorem{cro}[thm]{Corollary}
	\theoremstyle{definition}
	\newtheorem{rem}[thm]{Remark}
	\newtheorem{defi}[thm]{Definition}
	\newenvironment{asp}[1]{\innerasp}{\endinnerasp}
\begin{document}
\title
	[Spine decompositions and limit theorems for critical superprocesses]
	{\large Spine decompositions and limit theorems for a class of critical superprocesses}
\author{Yan-Xia Ren, Renming Song and Zhenyao Sun}
\address
	{Yan-Xia Ren\\
	School of Mathematical Sciences \& Center for
Statistical Science\\
	Peking University\\
	Beijing, P. R. China, 100871}
\email{yxren@math.pku.edu.cn}
\thanks{The research of Yan-Xia Ren is supported in part by NSFC (Grant Nos. 11671017  and 11731009), and LMEQF}
\address
	{Renming Song\\
	Dept of Mathematics\\
	University of Illinois at Urbana-Champaign\\
	Urbana, IL 61801}
\email{rsong@illinois.edu}
\thanks{The research of Renming Song is supported in part by the Simons Foundation (\#429343, Renming Song).}
\address
	{Zhenyao Sun\\
	School of Mathematical Sciences\\
	Peking University\\
	Beijing, P. R. China, 100871}
\curraddr
	{Department of Mathematics\\
	University of Illinois at Urbana-Champaign\\
	Urbana, IL 61801}
\email{zhenyao.sun@pku.edu.cn}
\thanks{Zhenyao Sun is supported by the China Scholarship Council. Corresponding author.}
\begin{abstract}
In this paper we first  establish a decomposition theorem for size-biased Poisson random measures.
As consequences of this decomposition theorem, we get a spine decomposition theorem  and a 2-spine decomposition theorem for some critical superprocesses.
Then we use these spine decomposition theorems to give probabilistic proofs of the
asymptotic behavior of the survival probability and Yaglom's exponential limit law for 
critical superprocesses.
\end{abstract}
\subjclass[2010]{60J80, 60F05}
\keywords{
	Critical superprocess, size-biased Poisson random measure, spine decomposition, 2-spine decomposition, asymptotic behavior of the survival probability, Yaglom's exponential limit law, martingale change of measure}
\maketitle
\section{Introduction}
\subsection{Motivation}
\label{sec: Motivation}
	It is well known that for a critical Galton-Watson process $\{(Z_n)_{n\in \mathbb N};P\}$, we have
\begin{equation}
\label{eq:Mot-1}
	nP(Z_n>0)
	\xrightarrow[n\to\infty]{} \frac{2}{\sigma^2}
\end{equation}
	and
\begin{equation}
\label{eq:Mot-2}
	\Big\{\frac{Z_n}{n}; P(\cdot|Z_n>0)\Big\}
	\xrightarrow[n\to\infty]{law} \frac{\sigma^2}{2} \mathbf e,
\end{equation}
	where $\sigma^2$ is the variance of the offspring distribution and $\mathbf e$ is an exponential random variable with mean 1.
	The result \eqref{eq:Mot-1} was first proved by Kolmogorov in \cite{Kolmogorov1938Zur-losung} under a third moment condition, and the result \eqref{eq:Mot-2} is due to Yaglom \cite{Yaglom1947Certain}.
	For further references to these results, see \cite{Harris2002The-theory, KestenNeySpitzer1966The-Galton-Watson}.
	Ever since these pioneering papers of Kolmogorov and Yaglom, lots of analogous results
	have been obtained for more general critical branching processes.
	For continuous time critical branching processes, see \cite{AthreyaNey1972Branching}; for discrete time multitype critical branching processes, see \cite{AthreyaNey1972Branching, JoffeSpitzer1967On-multitype}; for continuous time multitype critical branching processes, see \cite{AthreyaNey1974Functionals}; and for critical branching Markov processes, see \cite{AsmussenHering1983Branching}.
	We will call results like \eqref{eq:Mot-1} Kolmogorov type results and results like \eqref{eq:Mot-2} Yaglom type results.
	Similar results have also been obtained for some superprocesses.
	Evans and Perkins \cite{EvansPerkins1990Measure-valued} obtained both Kolmogorov type and Yaglom type results for critical superprocesses when the branching mechanism is $(x,z) \mapsto z^2$ and the spatial motion satisfies some ergodicity conditions.
	Recently, Ren, Song and Zhang \cite{RenSongZhang2015Limit} obtained similar limit results for a class of critical superprocesses with general branching mechanisms and general spatial motions.

	The proofs of the limit results in the papers mentioned above 
	are all analytic in nature and thus not very transparent.
	More intuitive probabilistic proofs would be very helpful.
	This was first accomplished for critical Galton-Watson processes, see \cite{Geiger1999Elementary, LyonsPemantlePeres1995Conceptual} for probabilistic proofs of \eqref{eq:Mot-1}, and \cite{Geiger2000A-new, LyonsPemantlePeres1995Conceptual, RenSongSun2018A-2-spine} for probabilistic proofs of \eqref{eq:Mot-2}.
	For more general models, Vatutin and Dyakonova \cite{VatutinDyakonova2001The-survival} gave a probabilistic proof of a Kolmogorov type result for multitype critical branching processes.
	Recently, Powell \cite{Powell2016An-invariance} gave probabilistic proofs of both Kolmogorov type and Yaglom type results for a class of critical branching diffusions. As far as we know, there is no probabilistic proof of Yaglom type result for multitype critical branching processes, and there are no probabilistic proofs of both Kolmogorov type and Yaglom type results for critical superprocesses yet.

In this paper, we will use the spine method to give probabilistic proofs of both Kolmogorov type and Yaglom type results for a class of critical superprocesses. 
	We will first establish a size-biased decomposition theorem for superprocesses (Theorem \ref{prop:sizBiasDecSupProc}) which will serve as a general framework for the spine method.
	Then, we will establish a spine decomposition theorem for superprocesses (Theorem \ref{prop:sizBiasNMeas}) which is more general than those previously considered in \cite{EckhoffKyprianouWinkel, EnglanderKyprianou2004Local, LiuRenSong2009LlogL}.
	We will also establish a 2-spine decomposition theorem for a class of critical superprocesses (Theorem \ref{prop:2-spine-decomposition}).
	Those spine decompositions are all special forms of
		the aforementioned size-biased decomposition.
	Finally, we use these tools to give probabilistic proofs of a Kolmogorov type result (Theorem \ref{thm:Kolmogorov-type-of-theorem}) and a Yaglom type result (Theorem \ref{thm:Yaglom-type-theorem}) 
for critical superprocesses under slightly weaker conditions than \cite{RenSongZhang2015Limit}.
To develop our decomposition for critical superprocesses, we first prove a size-biased decomposition theorem for Poisson random measures (Theorem \ref{prop:sizBaisPoissRandMeas}),
which we think is of independent interest.
Before we present our main results, we first give a brief review of earlier results on the spine method.
	
The spine method was first introduced in \cite{LyonsPemantlePeres1995Conceptual}.
	Roughly speaking, the spine decomposition theorem says that the size-biased transform of the branching process can be interpreted as an immigration branching process along with an immortal particle.
	This spine approach is generic in the sense that it can be adapted to a variety of general branching processes and is powerful in studying limit behaviors due to its relation with the size-biased transforms.
	In this paper, by the \emph{size-biased transform of a stochastic process} we mean the following:
Suppose that we are given, on some probability space $(\Omega,\mathscr F,P)$, a process $(X_t)_{t\in \Gamma}$, with $\Gamma$ being an arbitrary index set, and a non-negative random variable $G$ with $ P[G] \in (0,\infty)$.
	We say a process $\{(\dot X_t)_{t\in \Gamma}; \dot P\}$ is a \emph{$G$-transform} of the process $\{(X_t)_{t\in \Gamma}; P\}$ if $\{(\dot X_t)_{t\in \Gamma}; \dot P\} \overset{f.d.d.}{=} \{(X_t)_{t\in \Gamma}; P^G\}$, where $P^G$ is a probability  measure on $\Omega$ given by $dP^G := (G/P[G]) dP$. (This also give the definition of a \emph{size-biased transform of a random variable} since a random variable can be considered as a stochastic process 
whose index is a singleton.)

	Using the spine decomposition theorem for the Galton-Watson process $(Z_n)_{n\geq 0}$, Lyons, Pemantle and Peres \cite{LyonsPemantlePeres1995Conceptual} investigated the $Z_n$-transform of the process $(Z_k)_{0\leq k\leq n}$, which is denoted by $(\dot Z_k)_{0\leq k\leq n}$.
	Their key observation in the critical case is that $U\cdot \dot Z_n$ is distributed approximately like $Z_n$ conditioned on $\{Z_n > 0\}$, where $U$ is an independent uniform random variable on $[0,1]$.
	If one denotes by $X$ the weak limit of $\frac{Z_n}{n}$ conditioned on $\{Z_n > 0\}$, and by $\dot X$  the weak limit of $\frac{\dot Z_n}{n}$, then \cite{LyonsPemantlePeres1995Conceptual} proved that $\dot X$ is the $X$-transform of the positive random variable $X$ and
$X\overset{law}{=} U \cdot \dot X,$
	which implies that $X$ is an exponential random variable.

    The spine method is also used by Powell \cite{Powell2016An-invariance}
	to establish results parallel to \eqref{eq:Mot-1} and \eqref{eq:Mot-2} for a class of critical branching diffusion $\{(Y_t)_{t\geq 0}; (P_x)_{x\in D}\}$ in a bounded smooth domain $D\subset \mathbb R^d$.
	As have been discussed in \cite{Powell2016An-invariance}, a direct study of the partial differential equation satisfied by the survival probability $(t,x) \mapsto P_{x}(\|Y_t\| \neq 0)$ is tricky.
	Instead, by using a spine decomposition approach, Powell \cite{Powell2016An-invariance} showed that the survival probability decays like $a(t)\phi(x)$, where $\phi(x)$ is the principal eigenfunction of the mean semigroup of $(Y_t)$ and $a(t)$ is a function capturing the uniform speed.
	In this paper, our proof of the Kolmogorov type result 
   for critical superprocesses follows a similar argument.

The spine method for superprocesses was
	developed in \cite{EckhoffKyprianouWinkel, EnglanderKyprianou2004Local, LiuRenSong2009LlogL} and is very useful in studying limit behaviors of supercritical superprocesses.
	Heuristically, the spine is the trajectory of an immortal moving particle and the spine decomposition theorem says that, after a martingale change of measure, the transformed superprocess can be decomposed in law as an immigration process along this spine.
	The spine decomposition theorem established in this paper is more general than those in \cite{EckhoffKyprianouWinkel, EnglanderKyprianou2004Local, LiuRenSong2009LlogL}.
   We will say more about this in the next subsection.

	Very recently, we developed a 2-spine decomposition technique in \cite{RenSongSun2018A-2-spine} for critical Galton-Watson processes
	and used it to give a new probabilistic proof of Yaglom's result \eqref{eq:Mot-2}.
	One of the facts we used in \cite{RenSongSun2018A-2-spine} is that, if $X$ is a strictly positive random variable with finite second moment,
	then $X$ is an exponential random variable if and only if
\begin{equation}\label{eq:intro-1}
	\ddot X
	\overset{law}{=} \dot X + U \cdot \dot X'
\end{equation}
	where $\dot X$ and $\dot X'$ are independent $X$-transforms of $X$;
	$\ddot X$ is the $X^2$-transform of $X$;
	and $U$ is again an independent uniform random variable on $[0,1]$.
	We then proved in \cite{RenSongSun2018A-2-spine} that the $Z_n(Z_n-1)$-transform of the critical Galton-Watson process $(Z_k)_{0\leq k\leq n}$, which is denoted as $(\ddot Z_k^{(n)})_{0\leq k\leq n}$, can be interpreted as an immigration branching process along a 2-spine skeleton.
	One of those two spines is longer than the other.
	The spirit of our proof in \cite{RenSongSun2018A-2-spine} is to show that the immigration along the longer spine at generation $n$ is distributed approximately like $\dot Z_n$, while the immigration along the shorter spine at generation $n$ is distributed approximately like $\dot Z'_{[U\cdot n]}$.
	Here $\dot Z_n$ and $\dot Z_n'$ are independent $Z_n$-transforms of $Z_n$.
	Roughly speaking, we have
$
	\ddot Z_n^{(n)}
	\overset{law}{\approx} \dot Z_n + \dot Z'_{[U\cdot n]},
$
	and therefore, if $X$ is the weak limit of $\frac{Z_n}{n}$ conditioned on $\{Z_n>0\}$, then $X$ is a positive random variable satisfying \eqref{eq:intro-1}.
In this paper, we adapt the method of \cite{RenSongSun2018A-2-spine} to develop a 2-spine
decomposition for critical superprocesses and then use this 2-spine decomposition to give
probabilistic proofs of Kolmogorov type and Yaglom type results for superprocesses. The spirit of this paper is similar to that of \cite{RenSongSun2018A-2-spine}, but the arguments are more
complicated.

	The idea of multi-spine decomposition is not new. It was first introduced by Harris and Roberts \cite{HarrisRoberts2017The-many} in the context of branching processes.
Our 2-spine methods for Galton-Watson trees \cite{RenSongSun2018A-2-spine} and for superprocesses in this paper are both inspired by \cite{HarrisRoberts2017The-many}.
	An analogous $k$-spine decomposition theorem also appeared in \cite{HarrisJohnstonRoberts2017The-coalescent} and \cite{Johnston2017Coalescence} in the context of continuous time Galton-Watson processes.
	The $k$-th size-biased transform of Galton-Watson trees is also considered in \cite{AbrahamPierre2018Penalization}.
	A closely related infinite spine decomposition is also established in \cite{AbrahamPierre2018Penalization} for the supercritical Galton-Watson tree.
	
	There is another decomposition theorem for supercritical Galton-Watson trees with infinite spines which is first introduced in \cite[Section 12]{AthreyaNey1972Branching} and is now known as the skeleton decomposition.
	The infinite spines in \cite{AbrahamPierre2018Penalization} and the skeleton decomposition 
	in \cite[Section 12]{AthreyaNey1972Branching} are two different decomposition theorems.
	Our 2-spine methods for Galton-Watson trees \cite{RenSongSun2018A-2-spine} and for
    superprocesses in this paper are more relevant to \cite{AbrahamPierre2018Penalization}.

	We mention here that the analog of the skeleton decomposition in \cite[Section 12]{AthreyaNey1972Branching} for supercritical superprocesses is also available and is very popular.
	Heuristically, the skeleton is the trajectories of all the prolific individuals, that is, individuals with infinite lines of descent.
	The skeleton decomposition says that the supercritical superprocess itself can be decomposed in law as an immigration process along this skeleton.
	For the skeleton methods and its applications under a variety of names, see \cite{BerestyckiKyprianouMurillo-Salas2011The-prolific,  BertoinFontbonaMartinez2008On-prolific, DuquesneWinkel2007Growth, EckhoffKyprianouWinkel, EnglanderPinsky1999On-the-construction, EvansOConnell1994Weighted,  KyprianouPerezRen2014The-backbone, KyprianouRen2012Backbone,  Milos2018Spatial,  RenSongZhang2014Central}.
	If we consider critical superprocesses conditioned to be never extinct,
	then we will get the transformed superprocesses (after a Doob's $h$-transformation) considered in \cite{EckhoffKyprianouWinkel, EnglanderKyprianou2004Local, LiuRenSong2009LlogL} for the classical spine decomposition theorem. In this situation, there will be only one prolific individual which is exactly the spine particle.
	So the natural analog of the skeleton decomposition in the critical case is the classical spine decomposition.
	The skeleton decomposition will not be used in this paper.

\subsection{Main results}	
\label{sec: Main results}
	Let $E$ be a locally compact separable metric space.
	We will use $b\mathscr B_E$ and $p\mathscr B_E$ to denote the collection of all bounded Borel functions and positive Borel functions on $E$ respectively.
	We write $bp\mathscr B_E$ for $b\mathscr B_E \cap p\mathscr B_E$.
	For any functions $f,g$ and measure $\mu$ on $E$, we write $\|f\|_\infty := \sup_{x\in E} |f(x)|$, $\mu(f) := \int_E f d\mu$, $\langle \mu,f \rangle := \int_E f d\mu$ and $\langle f,g \rangle_\mu := \int_E fg d\mu$ as long as they have meanings.
	We use $\mathbf 0$ to denote the null measure and use $f\equiv 0$ to mean that $f$ is the zero function.
	If $g(t,x)$ is a function on $[0,\infty)\times E$, we say $g$ is \emph{locally bounded} if $\sup_{t\in [0,T],x\in E} |g(t,x)|<\infty$ for every $T\geq 0$.

	Let the \emph{spatial motion} $\xi=\{(\xi_t)_{t\geq 0};(\mathbb P_x)_{x\in E}\}$ be an $E$-valued Hunt process with its lifetime denoted by $\zeta$ and its transition semigroup denoted by $(P_t)_{t\geq 0}$.
Let the \emph{branching mechanism} $\psi$ be defined as a function on $E\times[0,\infty)$ by
\[
  \psi(x,z) 
  = -\beta(x)z + \alpha(x)z^2+\int_0^\infty (e^{-zr}-1+zr )\pi(x, dr),
  \qquad x\in E, z\geq0,
\]
with $\beta\in b\mathscr B_E,\alpha\in bp\mathscr B_E$ and $\pi(x,dy)$ being a kernel from $E$ to $(0,\infty)$ satisfying that \[\sup_{x\in E} \int_{(0,\infty)} (y\wedge y^2) \pi(x,dy) < \infty.\]
Define an operator $\Psi$ on $p\mathscr B_E$ by
\[
	(\Psi f) (x)
	:= \psi(x,f(x)),
	\quad f\in p\mathscr B_E, x\in E.
\]
Let $\mathcal M_f$ denote the space of all finite measures on $E$ equipped with the weak topology.
A \emph{$(\xi,\psi)$-superprocess} is an $\mathcal M_f$-valued Hunt process
	$X=\{(X_t)_{t\geq 0}; (\mathbf P_\mu)_{\mu \in \mathcal M_f}\}$ satisfying
\begin{equation}
\label{eq: Defi of Vt}
  \mathbf P_\mu [e^{-X_t(f)}] = e^{-\mu(V_tf)},
  \quad t\geq 0, \mu \in \mathcal M_f, f\in bp\mathscr B_E,
\end{equation}
  where, for each $f\in bp\mathscr B_E$, the function $(t,x) \mapsto V_tf(x)$ on $[0,\infty) \times E$ is the unique locally bounded positive solution to the equation
\begin{align}\label{eq:FKPP_in_definition}
  	V_t f(x) + \mathbb P_x \Big[ \int_0^t (\Psi V_{t-s} f)(\xi_s) ds \Big]
	=\mathbb P_x[f(\xi_t)], \quad t \geq 0, x\in E.
\end{align}
	We refer our readers to \cite{Dawson1993Measure-valued, Dynkin1993Superprocesses}
	and \cite[Section 2.3 \& Theorem 5.11]{Li2011Measure-valued}
	for detailed discussions about the existence of such processes.
	Notice that we always have $\mathbf P_{\mathbf 0}(X_t = \mathbf 0) = 1$ for each $t\geq 0$, i.e. the null measure $\mathbf 0$ is an absorption state of the superprocess.

	We will always assume that our superprocess is \emph{non-persistent}:
\begin{asp}{$1$}\label{asp:1}
	$\mathbf P_{\delta_x}(X_t = \mathbf 0) > 0$ for each $x \in E$ and $t>0$.
\end{asp}
	By a \emph{size-biased transform of a measure} we mean the following:
	For a non-negative measurable function $g$ on a measure space $(D,\mathscr F_D,\mathbf D)$ with $\mathbf D(g)\in (0,\infty)$, we define  the \emph{$g$-transform $\mathbf D^g$ of the measure $\mathbf D$} by
\[
	d\mathbf D^g
	:= \frac{g}{\mathbf D(g)} d\mathbf D.
\]
	Note that, the measure $\mathbf D$ is not necessarily a probability measure, but after the $g$-transform, $\mathbf D^g$ is always a probability measure.
	
	Our first result is about a decomposition theorem of the size-biased transforms of superprocesses.
	To state it, we need to introduce the Kuznetsov measures $(\mathbb N_x)_{x\in E}$ (also known as the excursion measures or $\mathbb N$-measures) of the superprocess $X$.
\begin{lem}[{\cite[Section 8.4 \& Theorem 8.24]{Li2011Measure-valued}}]
\label{lem: Kuznetsov measures}
	Under Assumption \ref{asp:1}, there exists an unique family of $\sigma$-finite measures $(\mathbb N_x)_{x\in E}$ defined on the Skorokhod space of measure-valued paths
\[
  \mathcal W :=\{ w= (w_t)_{t\geq 0}: w \text{ is an $\mathcal M_f$-valued c\`{a}dl\`{a}g function on $[0,\infty)$ having $\mathbf 0$ as a trap}\}
\]
	such that
\begin{enumerate}
\item
    $\mathbb N_x \{\forall t > 0, w_t=\mathbf 0\} =0$ for each $x\in E$;
\item
    $\mathbb N_x\{w_0 \neq \mathbf 0\} = 0$ for each $x\in E$;
\item
    for each $\mu \in \mathcal M_f$, if $\mathcal N(dw)$ is a Poisson random measure on $\mathcal W$ with mean measure
\[
  \mathbb N_\mu(dw):= \int_E \mathbb N_x(dw)\mu(dx), \quad w\in \mathcal W,
\]
  then the process defined by
\[
	\widetilde X_0 := \mu;
	\quad \widetilde X_t
	:=\int_{\mathcal W}w_t~\mathcal N(dw),
	\quad t>0,
\]
	is a realization of the superprocess $\{X; \mathbf P_{\mu}\}$.
\end{enumerate}
\end{lem}
The measures $(\mathbb N_x)_{x\in E}$ are called the \emph{Kuznetsov measures of the superprocess $X$}.
		Note that, the superprocess $X$ itself can be considered as a $\mathcal W$-valued random element.
    Roughly speaking, the branching property of superprocess  says that $X$ can be considered as an ``infinitely divisible'' $\mathcal W$-valued random element.
		The Kuznetsov measure $\mathbb N_x$ can then be interpreted as the ``L\'{e}vy measure'' of $X$ under $\mathbf P_{\delta_x}$.
	We refer our readers to \cite{DynkinKuznetsov2004N-measure} and \cite[Section 8.4]{Li2011Measure-valued} for more details about such measures.

In the remainder of this paper, we will always use $(\mathbb N_x)_{x\in E}$ to denote the
Kuznetsov measures of our superprocess $X$.
We will always use $w = (w_t)_{t\geq 0}$ to denote a generic element in $\mathcal W$.
With a slight abuse of notation,
we always assume that our superprocess $X$ is given by
\[
	X_0 := \mu;
	\quad X_t
	:=\int_{\mathcal W}w_t~\mathcal N(dw),
	\quad t>0,
\]
	where, for each $\mu \in \mathcal M_f$, $\{\mathcal N; \mathbf P_\mu\}$ is a Poisson random measure on $\mathcal W$ with mean measure $\mathbb N_\mu$.
Recall that, for any $w\in \mathcal W$ and $t\geq 0$, $w_t$ is a finite measure on $E$, and thus $w_t(f)=\int_Ef(x)w_t(dx)$ for any $f \in p\mathscr B_E$.

Our first result is about the $\mathcal N(F)$-transform of the superprocess $X$, where $F$ is a non-negative measurable function on $\mathcal W$ with $\mathbb N_\mu[F] \in (0,\infty)$ for a given $\mu \in \mathcal M_f$. In this case, according to Campbell's formula, we have
\[
	\mathbf P_\mu[\mathcal N(F)]= \mathbb N_\mu[F] \in (0,\infty).
\]	
	Therefore, both $\mathbb N_\mu^F$ --- the $F$-transform of $\mathbb N_\mu$, and $\mathbf P_\mu^{\mathcal N(F)}$ --- the $\mathcal N(F)$-transform of $\mathbf P_\mu$, are well defined probability measures.
\begin{thm}\label{prop:sizBiasDecSupProc}
	Suppose that Assumption \ref{asp:1} holds.
Let $\mu \in \mathcal M_f$ and $F$ be a non-negative measurable function on $\mathcal W$ with $\mathbb N_\mu(F)\in (0,\infty)$ .
	Let $\{(Y_t)_{t\geq 0}; \mathbf Q_\mu\}$ be a $\mathcal W$-valued random element with
 law $\mathbb N^F_\mu$.
	Then we have
$
	\{(X_t)_{t\geq 0}; \mathbf P_\mu^{\mathcal N(F)}\}
	\overset{f.d.d.}{=} \{(X_t + Y_t)_{t\geq 0}; \mathbf P_\mu \otimes \mathbf Q_\mu\}.
$
\end{thm}
In order to prove Theorem \ref{prop:sizBiasDecSupProc},
	we develop a decomposition theorem for size-biased transforms of Poisson random measures which we think should be of independent interest:
\begin{thm}\label{prop:sizBaisPoissRandMeas}
	Let $(S, \mathscr S)$ be a measurable space with a $\sigma$-finite measure $N$.
	Let $\{\mathbf N; P\}$ be a Poisson random measure on $(S, \mathscr S)$
	with mean measure $N$.
	Let $g \in p \mathscr S$ satisfy $N(g)\in (0,\infty).$
	Denote by $N^g$ and $P^{\mathbf N(g)}$ the $g$-transform of $N$ and the $\mathbf N(g)$-transform of $P$, respectively.
	Let $\{\vartheta;Q\}$ be an $S$-valued random element
	with law $N^g$.
	Then we have
$
	\{\mathbf N;P^{\mathbf N(g)}\}
	\overset{law}{=} \{\mathbf N + \delta_\vartheta;P\otimes Q\}.
$
\end{thm}
	Define $(S_t)_{t \geq 0}$ the \emph{mean semigroup} of the superprocess $X$ by
\begin{equation}
\label{eq: mean semigroup}
	S_t f(x)
	:= \mathbb P_x[e^{\int_0^t \beta(\xi_s) ds} f(\xi_t)],
	\quad x \in E,t \geq 0,f \in p\mathscr B_E.
\end{equation}
	For each $\mu \in \mathcal M_f$, we define $ (\mu\mathbb P)(\cdot):= \int_E \mathbb P_x(\cdot)\mu(dx)$.
	Note that $\mu\mathbb P$ is not necessarily a probability measure.
	It is well known (see \cite[Proposition 2.27]{Li2011Measure-valued} for example) that for each $\mu \in \mathcal M_f, t \geq 0$ and $f \in p\mathscr B_E,$
\begin{equation}
\label{eq: mean formula}
	\mathbf P_\mu[X_t(f)]
	= \mathbb N_\mu[w_t(f)]
	= (\mu\mathbb P)[e^{\int_0^T \beta(\xi_s)ds}f(\xi_T)\mathbf 1_{T<\zeta}]
	=\mu(S_t f).
\end{equation}

	Thanks to Theorem \ref{prop:sizBiasDecSupProc}, in order to study the size-biased transform of a superprocess we only have to study the corresponding size-biased transform of its Kuznetsov measures. 	
	We first consider the case when the function $F$ in Theorem \ref{prop:sizBiasDecSupProc} takes the form of $F(w)=w_T(g)$ where $T>0$ and $g\in p\mathscr B_E$ with $\mu(S_Tg)\in (0,\infty)$ for a given $\mu \in \mathcal M_f$.
	In this case, according to \eqref{eq: mean formula}, we have
\[
	\mathbf P_\mu [X_T(g)]
	= \mathbb N_\mu[w_T(g)] 
	= (\mu\mathbb P)[e^{\int_0^T \beta(\xi_s)ds}g(\xi_T)\mathbf 1_{T < \zeta}]
	\in (0,\infty).
\]
	Therefore, $\mathbf P_\mu^{X_T(g)}$ --- the $X_T(g)$-transform of $\mathbf P_\mu$, $\mathbb N_\mu^{w_T(g)}$ --- the $w_T(g)$-transform of the Kuznetsov measure $\mathbb N_\mu$, 
	and $\mathbb P^{(g,T)}_\mu$ --- the $(e^{\int_0^T\beta(\xi_s)ds} g(\xi_T)\mathbf 1_{T< \zeta})$-transform of the measure $\mu\mathbb P$, 
	are all well defined probability measures.
	Also note that, in this case, we have $X_T(g) = \mathcal N(F)$, therefore $\mathbf P_\mu^{X_T(g)} = \mathbf P_\mu^{\mathcal N(F)}$.
Recall that the superprocess $X$ itself can be considered as a $\mathcal W$-valued random element.
	Denote by $\mathbf P_\mu(X \in dw)$ the push-forward of $\mathbf P_\mu$ under $X$, i.e., the distribution of $X$ under $\mathbf P_\mu$. Then, $\mathbf P_\mu(X \in dw)$ is a probability measure on $\mathcal W$.
	Recall that we always assume that Assumption \ref{asp:1} holds.
\begin{defi}
\label{def: Spine representation}
Suppose that $\mu \in \mathcal M_f$, $T >0$ and $g \in p\mathscr B_E$ satisfy $\mu(S_Tg)\in (0,\infty)$.
	We say $\{(\xi_t)_{0\leq t\leq T}, (Y_t)_{0\leq t\leq T}, \mathbf n_T; \dot {\mathbf P}^{(g,T)}_\mu\}$ is a \emph{spine representation of $\mathbb N_\mu^{w_T(g)}$} if the following are true:
\begin{enumerate}
\item \label{def: Spine representation 1}
	The \emph{spine process} $\{(\xi_t)_{0\leq t\leq T}; \dot{\mathbf P}^{(g,T)}_\mu\}$ is a copy of $\{(\xi_t)_{0\leq t\leq T}; \mathbb P^{(g,T)}_\mu\}$.
\item
Conditioned on $\sigma(\xi_t: 0 \leq t\leq T)$, the \emph{immigration process}
	$\{(Y_t)_{0\leq t\leq T}; \dot{\mathbf P}^{(g,T)}_\mu\}$ is an $\mathcal M_f$-valued process given by
\begin{equation}\label{eq:defSpinImmigr}
	Y_t
	:= \int_{(0,t] \times \mathcal W} w_{t-s} \mathbf n_T(ds,dw),
	\quad 0 \leq t\leq T,
\end{equation}
	where,
	$\mathbf n_T$
	is a Poisson random measure on $[0,T] \times \mathcal W$ with mean measure
\begin{equation}\label{eq:meanMeasImmigr}
	\mathbf m^\xi_T(ds,dw)
	:= 2 \alpha(\xi_s) \mathbb N_{\xi_s}(dw)\cdot ds +  \int_{(0,\infty)} y \mathbf P_{y\delta_{\xi_s}}(X\in dw) \pi(\xi_s,dy)\cdot ds.
\end{equation}
\end{enumerate}
\end{defi}
	We are now ready to present our theorem on the spine decomposition of superprocesses:
\begin{thm}\label{prop:sizBiasNMeas}
	Suppose that Assumption \ref{asp:1} holds.
	Suppose that $\mu \in \mathcal M_f$, $T >0$ and $g \in p\mathscr B_E$ satisfy $\mu(S_Tg)\in (0,\infty)$.
	Let $\{(\xi_t)_{0\leq t\leq T}, (Y_t)_{0\leq t\leq T}, \mathbf n_T; \dot {\mathbf P}^{(g,T)}_\mu\}$ be a spine representation of $\mathbb N_\mu^{w_T(g)}$.
	Then,
$
	\{(Y_t)_{t\leq T}; \dot{\mathbf P}^{(g,T)}_\mu\}
	\overset{f.d.d.}{=} \{(w_t)_{t\leq T}; \mathbb N_\mu^{w_T(g)}\}.
$
\end{thm}
	As a simple consequence of Theorems \ref{prop:sizBiasDecSupProc} and  \ref{prop:sizBiasNMeas}, we have the following:
\begin{cro}\label{cro: spine decomposition}
	Suppose that Assumption \ref{asp:1} holds.
	Suppose that $\mu \in \mathcal M_f$, $T >0$ and $g \in p\mathscr B_E$ satisfy $\mu(S_Tg)\in (0,\infty)$.
	Let $\{(\xi_t)_{0\leq t\leq T}, (Y_t)_{0\leq t\leq T}, \mathbf n_T; \dot {\mathbf P}^{(g,T)}_\mu\}$ be a spine representation of $\mathbb N_\mu^{w_T(g)}$.
	Then,
$
	\{(X_t)_{t\geq 0}; \mathbf P_\mu^{X_T(g)}\}
	\overset{f.d.d.}{=} \{(X_t + Y_t)_{t\geq 0}; \mathbf P_\mu \otimes \dot {\mathbf P}^{(g,T)}_\mu\}.
$
\end{cro}
	
	Corollary \ref{cro: spine decomposition} can be considered as a generalization of the classical spine decomposition theorem for superprocesses developed in \cite{EckhoffKyprianouWinkel, EnglanderKyprianou2004Local, LiuRenSong2009LlogL}.
		In these earlier papers, the testing function $g$ is chosen specifically to be  the principal eigenfunction $\phi$ of the mean semigroup of the superprocess (which will be introduced shortly).
In the classical case (i.e. $g = \phi$), the four families of probability measures
	$(\mathbf P_\mu^{X_T(g)})_{T\geq 0}$, $(\mathbb P_\mu^{(g, T)})_{T\geq 0}$, $(\dot{\mathbf P}_{\mu}^{(g,T)})_{T>0}$
		and $(\mathbb N_\mu^{w_T(g)})_{T> 0}$ are all consistent,
	but  in the general case ( i.e. $g\neq \phi$), they are typically not consistent.
	More details about these consistencies will be provided in Lemma \ref{lem: measure of spine} and Remark \ref{rem: consistency}.
	
In the papers mentioned in the paragraph above, the Kuznetsov measures have already been used to describe infinitesimal immigrations along the spine.
	However, our Theorem \ref{prop:sizBiasNMeas} provides
   another  relation between
	immigration and the Kuznetsov measures: the total immigration $\{(Y_t)_{t\geq 0}; \dot {\mathbf P}^{(g,T)}_\mu\}$ actually has law of a size-biased transform of the Kuznetsov measures.
It seems that this fact has not been exploited before, even in the classical case.

	The study of the limit behavior of superprocesses $X$
		relies heavily on the spectral property of the mean semigroup.
 In this paper, we assume the following:
\begin{asp}{$2$}\label{asp:2}
	There exist a $\sigma$-finite Borel measure $m$ with full support on $E$ and a family of strictly positive, bounded continuous functions $\{ p(t,\cdot,\cdot): t > 0 \}$ on $E \times E$ such that,
\begin{align}
	P_t f(x)
	= \int_E p(t,x,y) f(y) m(dy),
	&\quad t>0, x \in E,f \in b\mathscr B_E,\label{eq:kernMeanSemGroup}
	\\ \int_E p(t,x,y)m(dx)
	\leq 1,
	&\quad t>0,y\in E,\label{eq:kernMeanSemGroup2}
	\\ \int_E \int_E p(t,x,y)^2 m(dx) m(dy)
	<\infty,
	&\quad t> 0,\label{eq:kernMeanSemGroup3}
\end{align}
	and that $x \mapsto \int_E p(t,x,y)^2 m(dy)$ and $y \mapsto \int_E p(t,x,y)^2 m(dx)$ are both continuous on $E$.
\end{asp}

	In the reminder of this paper, we will always use $m$ to denote the reference measure in Assumption \ref{asp:2}.

	Assumption \ref{asp:2} is a pretty weak assumption. \eqref{eq:kernMeanSemGroup2} implies that the adjoint operator $P^*_t$
of $P_t$ is also Markovian, and  \eqref{eq:kernMeanSemGroup3} implies that
$P_t$ and $P^*_t$ are Hilbert-Schmidt operators.
	Under Assumption \ref{asp:2}, it is proved in \cite{RenSongZhang2015Limit} and \cite{RenSongZhang2017Central} that the semigroup $(P_t)_{t \geq 0}$ and its adjoint semigroup $(P^*_t)_{t \geq 0}$ are both strongly continuous semigroups of compact operators on $L^2(E,m)$.
	According to \cite[Lemma 2.1]{RenSongZhang2015Limit}, there exists a function $q(t,x,y)$ on $(0,\infty) \times E \times E$ which is continuous in $(x,y)$ for each $t>0$ such that
\[
	e^{-\|\beta\|_\infty t} p(t,x,y)
	\leq q(t,x,y)
	\leq e^{\|\beta\|_\infty t} p(t,x,y),
	\quad t>0, x, y\in E,
\]
	and that for any $t>0, x\in E$ and $f \in b\mathscr B_E$,
\begin{equation}\label{eq: density of S}
	S_t f(x)
	= \int_E q(t,x,y) f(y) m(dy).
\end{equation}
	(From \eqref{eq: mean formula}, we see that $q(t,x,y) m(dy)$ can be
	roughly interpreted as the density of the expected mass of $X_t$ at position $y$,
	under probability $\mathbf P_{\delta_x}$.)
	Define a family of transition kernels $(S^*_t)_{t \geq 0}$ on $E$ by
\[
	S^*_0 = I;
	\quad S^*_t f(y)
	:= \int_E q(t,x,y) f(x) m(dx),
	\quad t>0, y\in E, f\in b\mathscr B_E.
\]
	It is clear that $(S^*_t)_{t \geq 0}$ is the adjoint semigroup of $(S_t)_{t \geq 0}$ in $L^2(E,m)$.
	It is proved in \cite{RenSongZhang2015Limit} and \cite{RenSongZhang2017Central} that $(S_t)_{t \geq 0}$ and $(S^*_t)_{t \geq 0}$ are also strongly continuous semigroups of compact operators in $L^2(E,m)$.
	Let $L$ and $L^*$ be the generators of the semigroups $(S_t)_{t \geq 0}$ and $(S^*_t)_{t \geq 0}$, respectively.
	Denote by $\sigma(L)$ and $\sigma(L^*)$ the spectra of $L$ and $L^*$, respectively.
	According to \cite[Theorem V.6.6.]{Schaefer1974Banach}, $\lambda := \sup \text{Re}(\sigma(L)) = \sup \text{Re}(\sigma(L^*))$ is a common eigenvalue of multiplicity $1$ for both $L$ and $L^*$.
	Using the argument in \cite{RenSongZhang2015Limit}, the eigenfunctions $\phi$ of $L$ and $\phi^*$ of $L^*$ associated with the eigenvalue $\lambda$ can be chosen to be strictly positive and continuous everywhere on $E$.
	We further normalize $\phi$ and $\phi^*$ so that $\langle\phi, \phi\rangle_m = \langle\phi,\phi^*\rangle_m = 1$.
	Moreover, for each $t\geq 0,x\in E$, we have $S_t \phi(x) = e^{\lambda t} \phi(x)$ and $S^*_t \phi^*(x) = e^{\lambda t} \phi^*(x)$.
	We call $\phi$ the \emph{principal eigenfunction} of the mean semigroup $(S_t)_{t\geq 0}$.
\begin{rem}
	Note that we do not require the operators $(P_t)_{t\geq 0}$ to be self-adjoint in $L^2(E,m)$, i.e., we do not assume $p(t,x,y)= p(t,y,x)$ for each $x,y\in E$ and $t>0$. In other word, the spatial motion $\xi$ considered in this paper is not necessarily a symmetric Markov process with respect to the measure $m$.
	As a consequence, $(S_t)_{t\geq 0}$ are not necessarily self-adjoint either.
	\end{rem}

We will use the following function
 \[
	A(x)
	:= 2\alpha(x) + \int_{(0,\infty)} y^2\pi(x,dy),
	\quad x\in E
  \]
in Assumption \ref{asp:3} below.

	For all $t \geq 0$ and $x\in E$, it is now clear that
$
  \mathbf P_{\delta_x}[X_t(\phi)]
  = S_t \phi(x)
  = e^{\lambda t} \phi(x).
$
  If $\lambda > 0$, the mean of $X_t(\phi)$ will increase exponentially; if $\lambda < 0$, the mean of $X_t(\phi)$ will decrease exponentially; and if $\lambda = 0$, the mean of $X_t(\phi)$ will be a constant.
  Because of this, we say $X$ is \emph{supercritical, critical} or \emph{subcritical}, according to $\lambda > 0$, $\lambda = 0$ or $\lambda < 0$, respectively.
	In this paper, we are mainly interested in critical superprocesses with finite second moments.
So, for the remainder of this paper, we always assume the following:
\begin{asp}{$3$}\label{asp:3}
\begin{enumerate}
\item \label{asp:3 1} 
	The superprocess $X$ is critical, i.e., $\lambda = 0 $.
\item \label{asp:3 2} 
	The function $\phi A:x\mapsto \phi(x)A(x)$  is bounded on $E$.
\end{enumerate}
\end{asp}

	Assumption \ref{asp:3}.\eqref{asp:3 2} is satisfied, for example, when $\phi$ and $A$ are bounded on $E$.
	These conditions appeared in the literature and was used by \cite{RenSongZhang2015Limit} in the proof of the Kolmogorov type and the Yaglom type results for critical superprocesses.

 	Denote by $\mathcal M_f^\phi$ the collection of all the measures $\mu \in \mathcal M_f$ such that $\mu(\phi) \in (0,\infty)$.
  	It will be proved in Proposition \ref{prop:covanrance} that $\mathbf P_\mu[X_t(\phi)^2]< \infty$ for each $\mu \in \mathcal M^\phi_f$ and $t>0$ provided the function
$
	\phi A: x\mapsto \phi(x) A(x)
$
	is bounded on $E$.

	Taking $\mu \in \mathcal M_f^{\phi}$, $T\geq 0$ and $g = \phi$ in Definition \ref{def: Spine representation}.\eqref{def: Spine representation 1}, it will be proved in Lemma \ref{lem: measure of spine} that the family of probability measures $(\mathbb P_\mu^{(\phi,T)})_{T\geq 0}$ is consistent,
	i.e., there exists an $E$-valued process $\{(\xi_t)_{t\geq 0}; \dot{\mathbb P}_\mu\}$ such that
\[	\{(\xi_t)_{0\leq t\leq T}; \mathbb P_\mu^{(\phi,T)}\}
	\overset{f.d.d}{=}\{(\xi_t)_{0\leq t\leq T}; \dot{\mathbb P}_\mu\},
	\quad T\geq 0.
\]	
	The process $\{(\xi_t)_{t\geq 0}; \dot{\mathbb P}_\mu\}$ is exactly the spine process in the classical spine decomposition.

It will also be proved in Proposition \ref{prop:covanrance} that, under Assumptions \ref{asp:1}, \ref{asp:2} and \ref{asp:3}, for all $\mu \in \mathcal M_f^\phi$ and $T>0$, we have
\[
	\mathbb N_\mu[w_T(\phi)^2]
	= \langle\mu, \phi\rangle \dot{\mathbb P}_{\mu} \Big[ \int_0^T (A\phi)(\xi_s) ds\Big]\in (0,\infty).
\]
	As a consequence, $\mathbb N_\mu^{w_T(\phi)^2}$ --- the $w_T(\phi)^2$-transform of $\mathbb N_\mu$, and $\ddot{\mathbb P}^{(T)}_\mu$ --- the $(\int_0^T (A\phi)(\xi_s) ds)$-transform of $\dot{\mathbb P}_{\mu}$,  are both well defined probability measures.
Recall that we always assume that Assumptions \ref{asp:1}, \ref{asp:2} and \ref{asp:3} hold.
\begin{defi}
	Let $\mu \in \mathcal M_f^\phi$ and $T>0$.
	We say
\[
	\{(\xi_t)_{0\leq t\leq T}, \kappa, (\xi_t')_{\kappa\leq t\leq T}, (Y_t)_{0\leq t\leq T}, \mathbf n_T,(Y'_t)_{\kappa \leq t\leq T}, \mathbf n'_T, (
	X'_t)_{\kappa \leq t\leq T}, (Z_t)_{0\leq t\leq T} ; \ddot {\mathbf P}_\mu^{(T)}\}
\]
	is a \emph{2-spine representation} of $\mathbb N_\mu^{w_T(\phi)^2}$ if the following are true:
\begin{enumerate}
\item
	\emph{The main spine} $\{(\xi_t)_{0\leq t\leq T}; \ddot{\mathbf P}_\mu^{(T)}\}$ is a copy of $\{(\xi_t)_{0\leq t\leq T}; \ddot{\mathbb P}_{\mu}^{(T)}\}$.
\item
	Conditioned on $(\xi_t)_{0\leq t \leq T}$, the \emph{splitting time} $\kappa$ is a random variable taking values in $[0,T]$ with law
\[
	\ddot{\mathbf P}_\mu^{(T)}\big(\kappa \in ds\big|(\xi_t)_{0\leq t\leq T}\big)
	=\frac {\mathbf 1_{0\leq s\leq T} (A\phi)(\xi_s) ds} {\int_0^T (A\phi)(\xi_r) dr}.
\]
\item
	Conditioned on $(\xi_t)_{t \leq T}$ and $\kappa$, the \emph{auxiliary spine} $(\xi'_t)_{\kappa \leq t \leq T}$ is defined such that
\begin{equation}\label{eq:defAuxilSpin}
	\{(\xi'_{\kappa+t})_{0 \leq t \leq T - \kappa}; \ddot{\mathbf P}_\mu^{(T)}(\cdot | \xi,\kappa) \}
	\overset{law}{=} \{(\xi_t)_{0 \leq t \leq T - \kappa}; \dot{\mathbb P}_{\xi_\kappa} \}.
\end{equation}
\item
	Write $\mathscr G
	:= \sigma \{ (\xi_t)_{t \leq T}, \kappa, (\xi_t')_{\kappa \leq t \leq T} \}$.
	Conditioned on $\mathscr G$, \emph{the main immigration} $(Y_t)_{0 \leq t\leq T}$ is given by
\[
	Y_t
	:= \int_{(0,t] \times \mathcal W} w_{t-s} \mathbf n_T(ds, dw),
	\quad t\in [0,T],
\]
	where $\mathbf n_T$ is a
	Poisson random measure on $[0,T] \times \mathcal W$ with mean measure
\[
	\mathbf m_T^\xi (ds,dw)
:=  2 \alpha(\xi_s)  \mathbb N_{\xi_s}(dw)\cdot ds
 + \int_{(0,\infty)} y \mathbf P_{y\delta_{\xi_s}}(X\in dw) \pi(\xi_s,dy)\cdot ds.
\]
\item
	Conditioned on $\mathscr G$, \emph{the auxiliary immigration} $(Y'_t)_{\kappa \leq t \leq T}$ is given by
\[
	Y'_t
	:= \int_{(\kappa,t] \times \mathcal W} w_{t-s} \mathbf n'_T(ds,dw),
	\quad t \in [\kappa,T],
\]
	where $\mathbf n'_T$ is a
	Poisson random measure on $[\kappa,T] \times \mathcal W$ with mean measure
\[
	\mathbf m^{\xi'}_{\kappa,T}(ds,dw)
:=  2 \alpha(\xi'_s)  \mathbb N_{\xi'_s}(dw)\cdot ds
+  \int_{(0,\infty)} y \mathbf P_{y\delta_{\xi'_s}}(X\in dw) \pi(\xi'_s,dy)\cdot ds.
\]
\item
	Conditioned on $\mathscr G$, \emph{the splitting-time immigration}
$(X'_t)_{\kappa \leq t \leq T}$ is defined by
\[
	\{(X'_{\kappa+t})_{0\leq t\leq T-\kappa}; \ddot{\mathbf P}_\mu(\cdot | \mathscr G)\}
	\overset{law}{=} \{(X_t)_{0 \leq t \leq T-\kappa}; \widetilde{\mathbf P}_{\xi_\kappa}\},
\]
where, for each $x\in E$, the probability measure $\widetilde{\mathbf P}_{x}$ is given by
\begin{equation}\label{eq:def-tilde-P}
	\widetilde{\mathbf P}_{x}(\cdot)
	:=
\begin{cases}
	\frac{2\alpha(x) \mathbf P_{\mathbf 0}(\cdot)+\int_{(0,\infty)}y^2\mathbf P_{y\delta_x}(\cdot)\pi(x,dy)}{2\alpha(x)+\int_{(0,\infty)}y^2\pi(x,dy)},
	&\quad \mbox{if } A(x)>0,\\
	\mathbf P_{\mathbf 0}(\cdot),
	&\quad \mbox{if } A(x)=0.
\end{cases}
\end{equation}
\item
	Conditioned on $\mathscr G$, the main immigration
	$\{Y,\mathbf n_T\}$, the auxiliary immigration $\{Y',\mathbf n'_T\}$ and the splitting-time immigration $X'$ are mutually independent.
	Setting $Y'_t = \mathbf 0$ and $X'_t = \mathbf 0$ for each $t\leq \kappa$, the \emph{total immigration $(Z_t)_{0\leq t\leq T}$} is given by
\[
	Z_t
	:= Y_t + Y_t' + X_t',
	\quad 0\leq t\leq T.
\]
\end{enumerate}
\end{defi}

We are now ready to state our 2-spine decomposition theorem for critical superprocesses:

\begin{thm}\label{prop:2-spine-decomposition}
	Suppose that Assumptions \ref{asp:1}, \ref{asp:2} and \ref{asp:3} hold.
	Let $\mu\in\mathcal M_f^\phi$ and $T>0$.
	Suppose that
$
	\{(\xi_t)_{0\leq t\leq T}, \kappa, (\xi_t')_{\kappa\leq t\leq T}, (Y_t)_{0\leq t\leq T}, \mathbf n_T,(Y'_t)_{\kappa \leq t\leq T}, \mathbf n'_T, (
	X'_t)_{\kappa \leq t\leq T}, (Z_t)_{0\leq t\leq T} ; \ddot {\mathbf P}_\mu^{(T)}\}
$
	is a 2-spine representation of $\mathbb N_\mu^{w_T(\phi)^2}$.
	Then
$
	\{(Z_t)_{t\leq T}; \ddot {\mathbf P}^{(T)}_\mu\}
	\overset{f.d.d.}{=} \{(w_t)_{t\leq T}; \mathbb N^{w_T(\phi)^2}_\mu\}.
$
\end{thm}
As mentioned earlier in Subsection \ref{sec: Motivation},
	this 2-spine decomposition theorem for superprocesses is an analog of the 2-spine decomposition theorem for Galton-Watson trees in \cite{RenSongSun2018A-2-spine}, and is closely related to the multi-spine theory appeared in  \cite{HarrisRoberts2017The-many}, \cite{HarrisJohnstonRoberts2017The-coalescent}, \cite{Johnston2017Coalescence} and \cite{AbrahamPierre2018Penalization}.
Of course, depend on the choice of $F$, there are many versions of Theorem \ref{prop:sizBiasDecSupProc}.
	We only consider the cases when $F(w)$ takes the forms of $w_t(g)$ and $w_t(\phi)^2$, because they are sufficient for our purpose to give probabilistic proofs of the Kolmogorov type and Yaglom type results for critical superprocesses.

	We now turn our attention to the limit behavior of critical superprocesses. First, we want to consider the asymptotic behavior of $v_t(x):= - \log \mathbf P_{\delta_x}(X_t = \mathbf 0)$, where $t>0$ and $x\in E$. (They are well defined thanks to Assumption \ref{asp:1}.)
	From \eqref{eq: Defi of Vt} and monotone convergence, we have
\begin{equation}
\label{eq: defi of vt}
	v_t(x) = \lim_{\theta \to \infty}V_t(\theta \mathbf 1_E)(x),
	\quad t> 0, x\in E,
\end{equation}
and
\begin{equation}\label{eq: extinction probability with vt}
	\mathbf P_{\mu}(X_t = \mathbf 0) = e^{- \mu(v_t) },
	\quad \mu \in \mathcal M_f, t\geq 0,
\end{equation}
 where the operators $(V_t)_{t\geq 0}$ are given by \eqref{eq: Defi of Vt}.
	We call $(V_t)_{t \geq 0}$ the \emph{cumulant semigroup} of the superprocess $X$, because it satisfies the semigroup property in the sense that, for all $f\in p\mathscr B_E, t, s \geq 0$ and $x \in E$, it holds that $V_t V_sf(x) = V_{t+s} f(x)$ (see \cite[Theorem 2.21]{Li2011Measure-valued}).

	Let $\psi_0$ be a function on $E\times[0,\infty)$ defined by
\[
	\psi_0(x,z)
	:= \psi(x,z) + \beta(x)z
	= \alpha(x)z^2 + \int_{(0,\infty)}(e^{-rz}-1+rz) \pi(x,dr),
	\quad x\in E, z\geq 0.
\] 
	Let $\Psi_0$ be an operator on $p\mathscr B_E$ defined by
\[
	(\Psi_0 f)(x)
	:= \psi_0(x,f(x)),
	\quad f\in p\mathscr B_E, x\in E.
\]
	It is known, see \cite[Theorem 2.23]{Li2011Measure-valued} for example, that for each $f\in bp\mathscr B_E$, $(t,x) \mapsto V_tf(x)$ is the solution of the equation
\begin{equation}
\label{eq:mean-fkpp}
	V_t f(x) + \int_0^t (S_{t-s}\Psi_0 V_s f)(x) ds
	= S_t f(x),\quad t\geq 0, x\in E.
\end{equation}
	Indeed, \eqref{eq:mean-fkpp} can be obtained from \eqref{eq:FKPP_in_definition} using a Feynman–Kac type  argument.
	It is also clear that
\begin{equation}\begin{split}\label{eq: simigroup for small vt}
	V_t v_s(x)
	&= -\log \mathbf P_{\delta_x}[e^{-\langle X_t,\lim_{\theta\to\infty } V_s(\theta \mathbf 1_E)\rangle}]
	= -\lim_{\theta \to \infty} \log \mathbf P_{\delta_x}[e^{-\langle X_t, V_s(\theta \mathbf 1_E)\rangle}]\\
	&= - \lim_{\theta\to\infty} V_t V_s(\theta \mathbf 1_E)(x)
	= v_{t+s}(x),
	\quad s,t>0, x\in E.
\end{split}\end{equation}
	So, if we allow extended values, it follows from  \eqref{eq:mean-fkpp} and \eqref{eq: simigroup for small vt} that we have the following equation for $(v_t)_{t\geq 0}$:
\begin{equation}\label{eq:reason-for-asp2'}
	v_{t+s}(x) + \int_0^t  (S_{t-r} \Psi_0 v_{r+s})(x)  dr
	= S_tv_s(x),
	\quad x\in E,t\geq 0.
\end{equation}
	 In order to study the asymptotic behavior of $(v_t)_{t\geq 0}$ using \eqref{eq:reason-for-asp2'},
	we need to understand the asymptotic behavior of the mean semigroup $(S_t)_{t\geq 0}$. The following assumption is commonly used for this purpose:
\begin{asp}{$2'$}
\label{asp:2'}
	In addition to Assumption \ref{asp:2}, we further assume that the mean semigroup $(S_t)_{t\geq 0}$ is \emph{intrinsically ultracontractive},
	that is, for each $t>0$ there exists $c_t>0$ such that for all $x,y\in E$, we have $q(t,x,y)\leq c_t\phi(x)\phi^*(y)$.
\end{asp}

		The concept of intrinsic ultracontractivity was first introduced by Davies and Simon \cite{DS} in the symmetric setting and was extended to the non-symmetric setting in \cite{KimSong2008Intrinsic}.  Assumption 	\ref{asp:2'} is a pretty strong condition on the mean semigroup $(S_t)_{t\geq 0}$. For instance, it excludes the case of super Brownian motions in the whole space. However, it is satisfied in a lot of cases.
	For a long list of (symmetric and non-symmetric) Markov processes satisfying Assumption \ref{asp:2'}, see \cite{RenSongZhang2015Limit}.

		A consequence of this assumption is that (see \cite[Theorem 2.7]{KimSong2008Intrinsic})
there exist constants $c>0$ and $\gamma >0$ such that
\begin{equation}\label{eq:IU}
	\Big|\frac{q(t,x,y)}{\phi(x)\phi^*(y)} - 1\Big| \leq c e^{-\gamma t}, \quad x\in E, t> 1.
\end{equation}
	We will see in Subsection \ref{sec: classcal spine decomposition} that, under Assumption \ref{asp:2}, the spine process $\{(\xi_t)_{t\geq 0}; (\dot{\mathbb P}_x)_{x\in E}\}$ in the classical spine decomposition is a time homogeneous Markov process with invariant measure $\phi(x)\phi^*(x)m(dx)$.
	It can be verified that its transition density with respect to measure $\phi(x)\phi^*(x)m(dx)$ is
$
\frac{q(t,x,y)}{\phi(x)\phi^*(y)}.
$
Therefore Assumption \ref{asp:2'} implies that the spine process in classical spine decomposition is exponentially ergodic.

	Define $\nu(dy) := \phi^*(y)m(dy)$.
	Under Assumption \ref{asp:2'},  $\nu(dy)$ is a finite measure on $E$.
	In fact, according to \eqref{eq:IU}, for $t>0$ large enough, there is a $c'_t>0$ such that $\phi^*(y) \leq q(t,x,y)  (c'_t)^{-1}\phi^{-1}(x)$,
	and clearly, the right hand of this inequality is integrable in $y$ with respect to measure $m$.
    Therefore, we can consider a superprocess $X$ with initial configuration $\nu$.
    Under Assumptions \ref{asp:1} and \ref{asp:2'}, it will be proved in Lemma \ref{lem:discuss-of-assumption2'} that the following statements are equivalent:
\begin{itemize}
\item
	$S_tv_s(x)<\infty$ for some $s>0,t>0$ and some $x\in E$.
\item
	$\mathbf P_\nu (X_t = \mathbf 0) > 0$ for some $t>0$.
\end{itemize}
	Note that, in order to take advantage of \eqref{eq:reason-for-asp2'}, we need $S_t v_s(x)$ to be finite at least for some large $s,t>0$ and  some $x\in E$.
	Therefore, we also need the following assumption:
	
	\begin{asp}{$1'$}\label{asp:1'}
	In addition to Assumption \ref{asp:1}, we further assume that $\mathbf P_\nu (X_t = \mathbf 0) > 0$ for some $t>0$.
\end{asp}
	
	We are now ready to state our Kolmogorov type and Yaglom type limit results for superprocesses:
\begin{thm}\label{thm:Kolmogorov-type-of-theorem}
	Suppose that Assumptions  \ref{asp:1'}, \ref{asp:2'} and \ref{asp:3} hold. Then,
\[
	t\mathbf P_\mu(X_t\neq \mathbf 0)
	\xrightarrow[t\to\infty]{} \frac{\langle \mu,\phi\rangle} {\frac{1}{2}\langle  A \phi,\phi \phi^*\rangle_m},
	\quad \mu\in \mathcal M^\phi_f,
\]
where $m$ is the reference measure appeared in Assumption \ref{asp:2}.
\end{thm}

\begin{thm}\label{thm:Yaglom-type-theorem}
	Suppose that Assumptions  \ref{asp:1'}, \ref{asp:2'} and \ref{asp:3} hold.
	Let $f\in bp\mathscr B^\phi_E$ and $\mu\in\mathcal M^\phi_f$.
	Then,
\[\begin{split}
	\big\{t^{-1}X_t(f);\mathbf P_\mu(\cdot | X_t\neq \mathbf 0)\big\}
	\xrightarrow[t\to\infty]{law} \frac{1}{2}\langle \phi^*, f\rangle_m\langle \phi A, \phi\phi^*\rangle_m \mathbf e,
\end{split}\]
	where $\mathbf e$ is an exponential random variable with mean 1,
 and $m$ is the reference measure  in Assumption \ref{asp:2}.
\end{thm}

	As mentioned earlier, our Kolmogorov type and Yaglom type results for critical superprocesses are established under slightly weaker conditions than \cite{RenSongZhang2015Limit}.
We now make this more precise.
	In \cite{RenSongZhang2015Limit}, the authors considered a $(\xi,\psi)$-superprocess $\{(X_t)_{t\geq 0}; (\mathbf P_\mu)_{\mu \in \mathcal M_f}\}$ which also satisfies Assumption \ref{asp:1}, \ref{asp:2} and \ref{asp:3}.\eqref{asp:3 1} as the basic setting. In addition to that, \cite{RenSongZhang2015Limit} assumed the following
\begin{itemize}
\item[(a)]
	the transition semigroup $(P_t)$ of the spatial motion is intrinsically ultracontractive,
\item[(b)]
	the principal eigenfunction of $(P_t)$ is bounded,
\item[(c)]
	the function $A$ is bounded, and
\item[(d)]
	there exists $t_0>0$ such that $\inf_{x\in E} \mathbf P_{\delta_x}(X_{t_0}= \mathbf 0)>0$.
\end{itemize}
It is shown in \cite{RenSongZhang2015Limit} that, under conditions (a) and (b), the mean semigroup $(S_t)$ is also intrinsically ultracontractive, and the principal eigenfunction $\phi$ of $(S_t)$ is also bounded. Therefore, conditions (a), (b) and (c) combined together are stronger than our Assumption \ref{asp:1'} and \ref{asp:3}. Condition (d) is stronger than our Assumption \ref{asp:2'} because according to \eqref{eq: extinction probability with vt}, we always have the following:
\[
	\mathbf P_\nu(X_t  = \mathbf 0) = \exp\{-\langle v_t,\nu \rangle\} = \exp\{\langle \log \mathbf P_{\delta_\cdot}(X_t = \mathbf 0),\nu \rangle\},\quad t>0.
\]

\section{Size-biased decomposition}
\subsection{Size-biased transform of Poisson random measures}
	In this subsection, we digress briefly from superprocesses and
   prove the size-biased decomposition theorem for Poisson random measures,
	i.e., Theorem \ref{prop:sizBaisPoissRandMeas}.
	Let $(S, \mathscr S)$ be a measurable space with a $\sigma$-finite measure $N$.
Let $\{\mathbf N; P\}$ be a Poisson random measure on $(S, \mathscr S)$ with mean measure $N$.
	Campbell's theorem, see \cite[Proof of Theorem 2.7]{Kyprianou2014Fluctuations} for example,
characterizes the law of $\{ \mathbf N; P \}$ by its Laplace functionals:
\[
	P [e^{-\mathbf N(g)}]
	= e^{-N(1 - e^{-g})},
	\quad g\in p\mathscr S.
\]
	According to \cite[Theorem 2.7]{Kyprianou2014Fluctuations}, we also have that $P [\mathbf N(g)] = N(g)$ for each $g\in \mathscr S$ with $N(|g|) < \infty.$
	By monotonicity, one can verify that
\[
	P [\mathbf N(g)] = N(g),
	\quad g\in p\mathscr S.
\]
\begin{lem}\label{lem:size-biased-lemma}
	If $g\in L^1(N)$ and $ f\in p\mathscr S$, then $\mathbf N(g) e^{-\mathbf N(f)}$ is integrable and
\begin{equation}\label{eq:size-biased-equation}
	P[\mathbf N(g) e^{-\mathbf N(f)}]
	= P[e^{-\mathbf N(f)}] N[g e^{-f}].
\end{equation}
	Furthermore, \eqref{eq:size-biased-equation} is true for each $g,f\in p\mathscr S$ if we allow extended values.	
\end{lem}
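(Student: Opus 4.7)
The plan is to exploit Campbell's formula $P[e^{-\mathcal N(h)}] = \exp(-N(1-e^{-h}))$, already recalled in the text for $h \in p\mathscr S$, by perturbing the test function linearly and differentiating at the origin. As a preliminary case, assume $g \in p\mathscr S$ is bounded with $N(g) < \infty$, and set $\Phi(\lambda) := P[e^{-\mathcal N(f + \lambda g)}] = \exp(-N(1 - e^{-f - \lambda g}))$ for $\lambda \geq 0$. Computing $\Phi'(0^+)$ from the left-hand side (by pulling the derivative inside the expectation) yields $-P[\mathcal N(g) e^{-\mathcal N(f)}]$, while computing $\Phi'(0^+)$ from the closed form on the right gives $-N(g e^{-f}) \exp(-N(1 - e^{-f})) = -N(g e^{-f}) \, P[e^{-\mathcal N(f)}]$. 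Equating these two expressions produces \eqref{eq:size-biased-equation} in this restricted setting.

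Next I would extend the identity in two steps. To reach arbitrary $g \in p\mathscr S$ with extended values (the ``furthermore'' statement), I pick $A_n \uparrow S$ with $N(A_n) < \infty$ using $\sigma$-finiteness of $N$, set $g_n := (g \wedge n) \mathbf 1_{A_n}$, apply the preliminary case to each $g_n$, and let $n\to\infty$: monotone convergence gives $\mathcal N(g_n) \uparrow \mathcal N(g)$ almost surely and $N(g_n e^{-f}) \uparrow N(g e^{-f})$, so both sides of \eqref{eq:size-biased-equation} pass to the limit in $[0,\infty]$. For signed $g \in L^1(N)$, the decomposition $g = g^+ - g^-$ together with linearity reduces to the non-negative case; the integrability of $\mathcal N(g) e^{-\mathcal N(f)}$ is automatic from $|\mathcal N(g) e^{-\mathcal N(f)}| \leq \mathcal N(|g|)$ and $P[\mathcal N(|g|)] = N(|g|) < \infty$.

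The main technical obstacle is justifying the interchange of derivative and expectation at $\lambda = 0^+$ in the preliminary step. For this I would use dominated convergence: the non-negative difference quotient $\lambda^{-1}(e^{-\mathcal N(f)} - e^{-\mathcal N(f+\lambda g)})$ is controlled, via the elementary inequality $1 - e^{-t} \leq t$ for $t \geq 0$, by $\mathcal N(g) e^{-\mathcal N(f)} \leq \mathcal N(g)$, and $\mathcal N(g)$ is integrable precisely under the temporary assumption $N(g) < \infty$. The analogous interchange on the analytic side is straightforward since $\lambda \mapsto N(1 - e^{-f-\lambda g})$ is a smooth finite function of $\lambda$ in the same regime. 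Beyond this point the argument is routine measure-theoretic bookkeeping and needs no finer structure of the Poisson measure than Campbell's formula and the $\sigma$-finiteness of $N$.
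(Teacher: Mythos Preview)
Your proposal is correct and follows essentially the same route as the paper: differentiate Campbell's formula $P[e^{-\mathcal N(f+\lambda g)}]=\exp(-N(1-e^{-f-\lambda g}))$ at $\lambda=0$ for a restricted class of $g$, then extend to general $g\in p\mathscr S$ by monotone approximation and to $g\in L^1(N)$ by splitting into positive and negative parts. The only cosmetic difference is that the paper approximates via $g^{(n)}=h\cdot(h^{-1}g\wedge n)$ for a fixed strictly positive $h$ with $N(h)<\infty$, whereas you use $(g\wedge n)\mathbf 1_{A_n}$ with $A_n\uparrow S$, $N(A_n)<\infty$; both achieve the same monotone limit.
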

\begin{proof}
	Since $N$ is a $\sigma$-finite measure on $(S, \mathscr S)$, there exists a strictly positive measurable function $h$ on $S$ such that $N(h)<\infty$.
	According to \cite[Theorem 2.7.]{Kyprianou2014Fluctuations}, $\mathbf N(h)$ has finite mean.
	For any $g\in bp\mathscr S^h :=\{g\in p\mathscr S: \|h^{-1}g\|_\infty <\infty\}$ and $f\in p\mathscr S$, it is clear that $\mathbf N(g)$ and $\mathbf N(g)e^{-\mathbf N(f)}$ are integrable.
	Therefore, by the dominated convergence theorem, we deduce that
\[\begin{split}
	P[\mathbf N(g) e^{-\mathbf N(f)}]
	&=P[-\partial_\theta|_{\theta=0} e^{-\mathbf N(f+\theta g)}]
	= -\partial_\theta|_{\theta=0} P[e^{-\mathbf N(f+\theta g)}]\\
	&= -\partial_\theta|_{\theta=0} e^{-N(1-e^{-(f+\theta g)})}
	= e^{- N(1-e^{-f})} \partial_\theta|_{\theta=0}N(1-e^{-(f+\theta g)})\\
	&= P[e^{-\mathbf N(f)}]N[ge^{-f}].
\end{split}\]
	For any $g\in p\mathscr S$ and $s\in S$, define $g^{(n)}(s) := h(s) \min\{h(s)^{-1}g(s),n\}$.
	Then $(g^{(n)})_{n\in \mathbb N}$ is a $bp\mathscr S^h$-sequence which increasingly
	converges to $g$ pointwise.
	Note that \eqref{eq:size-biased-equation} is true for each $g^{(n)}$ and $f$.
	Letting $n\to\infty$, by monotonicity, we see that if we allow extended values, then \eqref{eq:size-biased-equation} is true for each $g,f\in p\mathscr S$.
	In the case when $g\in L^1(N)$, we simply consider its positive and negative parts.
\end{proof}

\begin{proof}[Proof of Theorem \ref{prop:sizBaisPoissRandMeas}]
	By Lemma \ref{lem:size-biased-lemma}, it is easy to see that, for any $f\in p\mathscr S$,
\[\begin{split}
	P^{\mathbf N(g)}[e^{-\mathbf N(f)}]
	&= N(g)^{-1} P[\mathbf N(g) e^{-\mathbf N(f)}]
	= N(g)^{-1} P[e^{-\mathbf N(f)}] N[ge^{-f}]\\
	&=  P[e^{-\mathbf N(f)}] N^g[e^{-f}]
	= (P\otimes Q)[e^{-\mathbf N(f) - f(\vartheta)}]
	= (P\otimes Q)[e^{-(\mathbf N + \delta_\vartheta)(f)}],
\end{split}\]
	which completes the proof.
\end{proof}
\begin{lem}\label{lem:covPoissRandMeas}
	For all $g, f \in L^1(N) \cap L^2(N)$, $\mathbf N(g) \mathbf N(f)$ is integrable and
\begin{equation}\label{eq:2rdMomPoissRandMeas}
	P [\mathbf N(g) \mathbf N(f)]
	= N(g) N(f) + N(g f).
\end{equation}
	Furthermore, \eqref{eq:2rdMomPoissRandMeas} is true for all $g,f\in p\mathscr S$ if we allow extended values.
\end{lem}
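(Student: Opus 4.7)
The plan is to mimic the differentiation strategy used in the proof of Lemma~\ref{lem:size-biased-lemma}, but now to extract a second-order derivative. I would first establish the identity for $g,f\in p\mathscr S$ with $N(g),N(f)<\infty$ via a subtract-and-divide argument based on that lemma, and then extend to all $g,f\in p\mathscr S$ by monotone approximation, paralleling the truncation step in the proof of Lemma~\ref{lem:size-biased-lemma}.

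Assume first that $g,f\in p\mathscr S$ with $N(g),N(f)<\infty$. Applying Lemma~\ref{lem:size-biased-lemma} with $f$ replaced by $\theta f$ for $\theta>0$ gives
\[
    P[\mathcal N(g) e^{-\theta \mathcal N(f)}]
    = P[e^{-\theta \mathcal N(f)}]\, N[g e^{-\theta f}].
\]
Subtracting this from $P[\mathcal N(g)]=N(g)$ (both finite), dividing by $\theta$, and using the elementary identity $1-e^{-\theta \mathcal N(f)} = \int_0^\theta \mathcal N(f) e^{-s\mathcal N(f)}\, ds$ together with Tonelli on the left yields
\[
    \frac{1}{\theta} \int_0^\theta P[\mathcal N(g)\mathcal N(f) e^{-s\mathcal N(f)}]\, ds
    = \frac{N(g) - P[e^{-\theta\mathcal N(f)}]\,N[ge^{-\theta f}]}{\theta}.
\]
The integrand on the left is nonincreasing in $s\geq 0$ and, by monotone convergence applied to $e^{-s\mathcal N(f)}\uparrow 1$, increases to $P[\mathcal N(g)\mathcal N(f)]\in[0,\infty]$ as $s\downarrow 0$. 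Thus the left-hand average is sandwiched between its endpoint values $P[\mathcal N(g)\mathcal N(f) e^{-\theta\mathcal N(f)}]$ and $P[\mathcal N(g)\mathcal N(f)]$, both of which converge monotonically to $P[\mathcal N(g)\mathcal N(f)]$ as $\theta\downarrow 0$.

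On the right-hand side, I would split the numerator as $N[g(1-e^{-\theta f})] + (1-P[e^{-\theta\mathcal N(f)}])\,N[ge^{-\theta f}]$. A short calculus check shows that $\theta\mapsto (1-e^{-\theta x})/\theta$ is nonincreasing on $(0,\infty)$ with $\lim_{\theta\downarrow 0}=x$, so two applications of monotone convergence yield
\[
    \frac{N[g(1-e^{-\theta f})]}{\theta}\uparrow N(gf),
    \qquad
    \frac{1-P[e^{-\theta\mathcal N(f)}]}{\theta}=P\Bigl[\tfrac{1-e^{-\theta\mathcal N(f)}}{\theta}\Bigr]\uparrow N(f),
\]
while $N[ge^{-\theta f}]\uparrow N(g)$. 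Combining these (each factor monotone and nonnegative, so the limit of the product equals the product of the limits in $[0,\infty]$), the right-hand side converges to $N(gf)+N(g)N(f)$; this proves \eqref{eq:2rdMomPoissRandMeas} for $g,f$ with finite $N$-integrals. To obtain the extended-value version, fix a strictly positive $h\in p\mathscr S$ with $N(h)<\infty$ and apply the identity to the truncated sequences $g^{(n)}=h\cdot\min(h^{-1}g,n)$ and $f^{(n)}=h\cdot\min(h^{-1}f,n)$, then pass to the limit via monotone convergence on each of the three terms. Finally, for $g,f\in L^1(N)\cap L^2(N)$, Cauchy--Schwarz gives $N(|gf|)\leq N(g^2)^{1/2}N(f^2)^{1/2}<\infty$, so all terms are finite; the signed case follows by splitting $g$ and $f$ into positive and negative parts. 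The only technical point worth flagging is the need to have $N(g),N(f)<\infty$ before performing the subtraction, which is precisely why the monotone extension to extended values is carried out as a separate step.
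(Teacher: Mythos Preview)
Your argument is correct. The overall architecture matches the paper's proof: both start from Lemma~\ref{lem:size-biased-lemma}, differentiate the identity $P[\mathcal N(g)e^{-\theta\mathcal N(f)}]=P[e^{-\theta\mathcal N(f)}]N[ge^{-\theta f}]$ at $\theta=0$, extend to all of $p\mathscr S$ by monotone truncation against a strictly positive $h$ with $N(h)<\infty$, and handle the signed $L^1\cap L^2$ case by splitting into positive and negative parts.

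The difference is in how the derivative at $\theta=0$ is justified. The paper first manufactures a reference function $h=\min\{\tilde h,\tilde h^{1/2}\}$ with $N(h)<\infty$ \emph{and} $N(h^2)<\infty$, so that for $g,f\in bp\mathscr S^h$ the random variable $\mathcal N(g)\mathcal N(f)$ is dominated by a constant times $\mathcal N(h)^2\in L^1$; this licenses a direct application of dominated convergence to pass $\partial_\theta|_{\theta=0}$ through the expectation. You instead work with difference quotients and monotone convergence: since $(1-e^{-\theta x})/\theta$ is nonincreasing in $\theta$ and increases to $x$ as $\theta\downarrow 0$, every limit you take is monotone, and no $L^2$ domination is needed up front. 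This buys you a slightly larger first-step class (you only need $N(g),N(f)<\infty$, not membership in $bp\mathscr S^h$ for an $h$ with finite second moment), at the cost of a more hands-on computation. Both routes are short; the paper's is a bit slicker once the $h$ is in hand, while yours is more self-contained and avoids the $\tilde h^{1/2}$ trick entirely.
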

\begin{proof}
	Since $N$ is a $\sigma$-finite measure on $(S, \mathscr S)$, there exists a strictly positive measurable function $\tilde h$ on $S$ such that $N(\tilde h)<\infty$.
	Define $h(s) := \min\{ \tilde h(s), \tilde h(s)^{1/2} \}$ for each $s \in S$.
	It is clear that $h$ is a strictly positive measurable function on $S$ such that $N(h) < \infty$ and $N(h^2) < \infty$.
	According to \cite[Theorem 2.7]{Kyprianou2014Fluctuations}, $\mathbf N(h)$ has finite 1st and 2nd moments.
	For any $g, f \in bp\mathscr S^h := \{g \in p\mathscr S: \|h^{-1} g\|_\infty < \infty\}$, it is easy to see that $\mathbf N(g), \mathbf N(f), \mathbf N(f) \mathbf N(g)$ are integrable.
	Thus, using Lemma \ref{lem:size-biased-lemma} and the dominated convergence theorem, we have
\[\begin{split}
	P [\mathbf N(g) \mathbf N(f)]
	&= - P[\partial_\theta|_{\theta = 0} \mathbf N(g) e^{-\mathbf N(\theta f)}]
	= - \partial_\theta|_{\theta = 0} P[\mathbf N(g) e^{-\mathbf N(\theta f)}]\\
	&= - \partial_\theta|_{\theta = 0} P[e^{-\mathbf N(\theta f)}]N(ge^{-\theta f})\\
	&= - N[g] \partial_\theta|_{\theta = 0} P[e^{-\mathbf N(\theta f)}] - \partial_\theta|_{\theta = 0} N(ge^{- \theta f}) \\
	&= - N(g) P[ \partial_\theta|_{\theta = 0} e^{-\mathbf N(\theta f)}] - N( \partial_\theta|_{\theta = 0} g e^{-\theta f}) \\
	&= N(g) N(f) + N(g f).
\end{split}\]
	For any $g,f\in p\mathscr S$ and $s \in S$, define $g^{(n)}(s) := h(s) \min\{h(s)^{-1}g(s),n\}$.
	Then $(g^{(n)})_{n\in \mathbb N}$ is a $bp\mathscr S^h$-sequence which increasingly converges to $g$ pointwise.
	Define $f^{(n)}$ similarly.
	Then from what we have proved, \eqref{eq:2rdMomPoissRandMeas} is true for $g^{(n)}$ and $f^{(n)}$.
	Letting $n\to\infty$, by monotonicity, \eqref{eq:2rdMomPoissRandMeas} is true for each $g,f\in p\mathscr S$ if we allow extended values.
	In the case when $g,f\in L^1(N) \cap L^2(N)$ we simply consider their positive and negative parts.
\end{proof}

\subsection{Size-biased transform of the superprocesses.}
\label{sec: size-biased transform of the superprocesses}
	Let $X=\{(X_t)_{t\geq 0}; (\mathbf P_\mu)_{\mu \in \mathcal M_f}\}$ be the $(\xi,\psi)$-superprocess introduced in Subsection \ref{sec: Main results} which satisfies Assumption \ref{asp:1}.
	In this subsection, we will give a proof of Theorem \ref{prop:sizBiasDecSupProc}.
 Recall that, for any $\mu\in \mathcal M_f$, $\{\mathcal N; \mathbf P_\mu\}$ is a Poisson random measure
	with mean measure $\mathbb N_\mu$,
	and our $(\xi,\psi)$-superprocess $(X_t)_{t\ge 0}$ is  given by
\[
	X_0 := \mu;
	\quad X_t(\cdot)
	:=\mathcal N[w_t(\cdot)],
	\quad t>0.
\]

	For any $T>0$, we write $(K,f)\in \mathcal K_T$ if $f: (s,x) \mapsto f_s(x)$ is a bounded non-negative Borel function on $(0,T] \times E$ and $K$ is an atomic measure on $(0,T]$ with finitely many atoms.
	For any $(K,f)\in\mathcal K_T$ and any $\mathcal M_f$-valued  process $(Y_t)_{t> 0}$, we define the random variable
\[	
	K_{(s, T]}^f(Y)
	:= \int_{(s,T]} Y_{r-s}(f_r) K(dr),
	\quad s\in [0,T].
\]	
	It is clear that the two $\mathcal M_f$-valued processes $(Y_t)_{t>0}$ and $(X_t)_{t>0}$ have same finite-dimensional distributions if and only if
\[
	\mathbf E[e^{-K_{(0,T]}^f(X)}]
	=\mathbf E[e^{-K^f_{(0,T]}(Y)}],
	\quad (K,f)\in \mathcal K_T, T>0.
\]

\begin{proof}[Proof of Theorem \ref{prop:sizBiasDecSupProc}]
	Since $\mathbb N_\mu(F) \in (0,\infty)$, it follows from Campbell's formula that $\mathbf P_\mu[\mathcal N(F)] = \mathbb N_\mu(F) \in (0,\infty)$.
	Therefore, $\mathbf P_\mu^{\mathcal N(F)}$ -- the $\mathcal N(F)$-transform of $\mathbb P_\mu$, and $\mathbb N_\mu^F$ --- the $F$-transform of $\mathbb N_\mu$, are both well defined probability measures.
	Notice that, under $\mathbf P_\mu^{\mathcal N(F)}$, $X_0\overset{\text{a.s.}}{=}\mu$ is deterministic,
	and so is $X_0+Y_0$ under  $\mathbf P_\mu \otimes \mathbf Q_\mu$ since $X_0+Y_0\overset{\text{a.s.}}=\mu$.
	Therefore, we only have to show that,
\[
	\{(X_t)_{t > 0}; \mathbf P_{\mu}^{\mathcal N(F)}\}
	\overset{f.d.d.}{=} \{(X_t + Y_t)_{t > 0}; \mathbf P_\mu\otimes \mathbf Q_\mu\}.
\]
	It then immediately follows from Theorem \ref{prop:sizBaisPoissRandMeas} that
\[
	\{\mathcal N; \mathbf P^{\mathcal N(F)}_\mu\}
	\overset{law}{=} \{\mathcal N + \delta_Y; \mathbf P_\mu\otimes \mathbf Q_\mu\}.
\]
	This completes the proof since for any $T > 0$ and $(K,f) \in \mathcal K_T$,
\[\begin{split}
	\mathbf P_\mu^{\mathcal N(F)} [e^{-K_{(0, T]}^f(X)}]
	&= \mathbf P_\mu^{\mathcal N(F)} [e^{-\mathcal N[K_{(0, T]}^f(w)]}]
	= (\mathbf P_\mu\otimes \mathbf Q_\mu) [e^{-(\mathcal N+\delta_Y)[K_{(0, T]}^f(w)]}]\\
	&= (\mathbf P_\mu\otimes \mathbf Q_\mu) [e^{-K_{(0, T]}^f(X+Y)}].
	\qedhere
\end{split}\]
\end{proof}

\section{Spine decomposition of superprocesses}
	The  classical spine decomposition theorem characterizes the superprocess  $X$ after a martingale change of measure, and has been investigated in the literature in different situations, see \cite{EckhoffKyprianouWinkel, EnglanderKyprianou2004Local, LiuRenSong2009LlogL} for example.
	The martingale that is used for the change of measure is defined by $M_t := e^{-\lambda t}X_t(\phi)$, where $\phi$ is the principal eigenfunction of the generator of the mean semigroup of $X$ with $\lambda$ being the corresponding eigenvalue.
	After this martingale change of measure, the transformed process preserves the Markov property, and thus, to prove the spine decomposition theorem, one only needs to focus on the one-dimensional distribution of the transformed process.
\par
	In this section, we generalize this classical result by considering the $X_T(g)$-transform of the superprocess $X$, where $g$ is a non-negative Borel function on $E$.
	If $g$ is not equal to $\phi$, the $X_T(g)$-transformed process is typically not a Markov process.
	So we have to use a different method to develop the theorem.
	Thanks to Theorem \ref{prop:sizBiasDecSupProc},
	we only have to consider the $w_T(g)$-transform of the Kuznetsov measures.
\subsection{Spine decomposition theorem}
	Let $X=\{(X_t)_{t\geq 0}; (\mathbf P_\mu)_{\mu \in \mathcal M_f}\}$ be the $(\xi,\psi)$-superprocess introduced in Subsection \ref{sec: Main results} which satisfies Assumption \ref{asp:1}.
	In this subsection, we will give a proof of Theorem \ref{prop:sizBiasNMeas}.
	Recall that $(\mathbb N_x)_{x\in E}$ are the Kuznetsov measures defined in Lemma \ref{lem: Kuznetsov measures}. We now recall a result from \cite{Li2011Measure-valued} which is useful for calculations related to $(\mathbb N_x)_{x\in E}$.
\begin{lem}[{\cite[Theorems 5.15 and 8.23]{Li2011Measure-valued}}]\label{lem:EquatDescNmeas}
	Under Assumption \ref{asp:1}, for all $T> 0$ and $(K,f) \in \mathcal K_T$, we have
\[
	\mathbb N_\mu \big[ 1 - e^{-K_{(s, T]}^f(w)} \big]
	= \mu(u_s)
	= -\log \mathbf P_{\mu} \big[ e^{-K_{(s, T]}^f(X)} \big],
	\quad s\in [0,T], \mu \in \mathcal M_f,
\]
	where the function $u: (s,x) \mapsto u_s(x)$ on $[0,T] \times E$ is the unique bounded positive solution to the following integral equation:
\[
	u_s(x)
    =\mathbb P_x \Big[\int_{(s,T]} f_r(\xi_{r-s}) K(dr) -
    \int_s^T (\Psi u_r)(\xi_{r-s}) dr \Big],
    \quad s \in [0,T], x \in E.
\]
\end{lem}

We now prove the following lemmas:
\begin{lem}\label{lem:relSpinNMeas}
	For all $x\in E, T>0, (K,f) \in \mathcal K_T$ and $g \in p\mathscr B_E$, we have
\begin{equation}\label{eq:relSpinNMeas}
	\mathbb N_x[w_T(g) e^{-K_{(0, T]}^f(w)}]
	= \mathbb P_x[g(\xi_T) e^{-\int_0^T \psi'(\xi_s,u_s(\xi_s)) ds}],
\end{equation}
	where
\[
	\psi'(x,z)
	:= \partial_z \psi(x,z)
	= - \beta(x) + 2 \alpha(x) z + \int_{(0,\infty)} (1 - e^{-yz}) y \pi(x,dy),
	\quad x \in E, z \geq 0,
\]
	and $u: (s,x) \mapsto u_s(x)$ on $[0,T] \times E$ is defined in Lemma \ref{lem:EquatDescNmeas}.
\end{lem}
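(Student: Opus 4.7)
The plan is to derive the identity by perturbing the functional inside the exponential by a small multiple of $w_T(g)$ and differentiating at the perturbation parameter equal to zero. Concretely, for $\theta\ge 0$ I pass from $(K,f)\in\mathcal K_T$ to a pair $(K^{(\theta)},f^{(\theta)})\in\mathcal K_T$ obtained by augmenting an atom at time $T$ (or adding one if $K$ has none there) so that
\[
K^{(\theta),f^{(\theta)}}_{(s,T]}(w) \;=\; K^{f}_{(s,T]}(w) \;+\; \theta\,\mathbf 1_{\{s<T\}}\,w_{T-s}(g),\qquad s\in[0,T].
\]
This places the perturbed functional within the scope of Lemma \ref{lem:EquatDescNmeas}, at least for bounded $g$; the unbounded case can be handled at the end by monotone approximation $g\wedge n\uparrow g$.

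Applying Lemma \ref{lem:EquatDescNmeas} with $\mu=\delta_x$ to $(K^{(\theta)},f^{(\theta)})$ gives
\[
\mathbb N_x\bigl[1-e^{-K^{f}_{(0,T]}(w)-\theta w_T(g)}\bigr] \;=\; u^{(\theta)}_0(x),
\]
where $u^{(\theta)}:[0,T]\times E\to\mathbb R_+$ is the unique bounded positive solution of
\[
u^{(\theta)}_s(x) \;=\; \mathbb P_x\!\Big[\int_{(s,T]}f_r(\xi_{r-s})\,K(dr)+\theta\,g(\xi_{T-s})-\int_s^T\Psi(u^{(\theta)}_r)(\xi_{r-s})\,dr\Big].
\]
Differentiating in $\theta$ at $\theta=0^+$, the left-hand side yields $\mathbb N_x\bigl[w_T(g)\,e^{-K^{f}_{(0,T]}(w)}\bigr]$, while writing $u:=u^{(0)}$ and $h_s(x):=\partial_\theta u^{(\theta)}_s(x)|_{\theta=0^+}$ and using $\partial_z\Psi=\Psi'$, the function $h$ satisfies the linear integral equation
\[
h_s(x) \;=\; \mathbb P_x\Big[g(\xi_{T-s})-\int_s^T\Psi'(u_r)(\xi_{r-s})\,h_r(\xi_{r-s})\,dr\Big].
\]
A standard Feynman--Kac argument then identifies this solution uniquely as
\[
h_s(x) \;=\; \mathbb P_x\Big[g(\xi_{T-s})\exp\Big(-\!\int_s^T\Psi'(u_r)(\xi_{r-s})\,dr\Big)\Big],
\]
and setting $s=0$ produces precisely \eqref{eq:relSpinNMeas}.

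The main obstacle is justifying the differentiation, both under the $\sigma$-finite measure $\mathbb N_x$ and inside the nonlinear integral equation. For the former, I would dominate by noting that $\mathbb N_x[w_T(g)]=S_Tg(x)<\infty$ when $g$ is bounded, so $\theta\mapsto\mathbb N_x[1-e^{-K^f_{(0,T]}(w)-\theta w_T(g)}]$ is convex and right-differentiable at $0$ with the expected derivative. For the latter, I would first establish that $\{u^{(\theta)}\}_{\theta\in[0,1]}$ is uniformly bounded on $[0,T]\times E$; since $\Psi(x,\cdot)$ is $C^1$ on $\mathbb R_+$ with $\Psi'$ bounded on bounded sets (this uses $\beta,\alpha$ bounded and $\sup_x\int(y\wedge y^2)\pi(x,dy)<\infty$), a Gr\"onwall/contraction argument gives the $L^\infty$-differentiability of $\theta\mapsto u^{(\theta)}$ and lets me pass the $\theta$-derivative through both the Poisson integral and the time integral, producing the advertised linear equation for $h$. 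The Feynman--Kac representation then follows from the uniqueness of bounded solutions to this linear equation, together with the fact that $\Psi'(u_r)$ is bounded on $[0,T]\times E$, which makes the exponential weight well-defined.
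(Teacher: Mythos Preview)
Your proposal is correct and follows essentially the same approach as the paper: perturb $K^f_{(s,T]}(w)$ by $\theta w_{T-s}(g)$ so that Lemma~\ref{lem:EquatDescNmeas} applies, differentiate the resulting nonlinear integral equation at $\theta=0$ using the uniform bounds on $u^{(\theta)}$ and the local Lipschitz property of $\Psi$, then identify the derivative via Feynman--Kac, and finally pass from bounded $g$ to $g\in p\mathscr B_E$ by $g\wedge n\uparrow g$. The only cosmetic difference is that the paper justifies the interchange of $\partial_\theta$ and the integrals by explicit dominated-convergence bounds rather than a Gr\"onwall/contraction argument, but the substance is identical.
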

\begin{proof}
	We first prove assertion \eqref{eq:relSpinNMeas} in the case when $g \in bp\mathscr B_E$.
	Throughout this proof, we fix $(K,f) \in \mathcal K_T$ and consider $0 \leq \theta \leq 1$.
	Define
\begin{equation}\label{eq:mm-1}
	u_s^\theta(x)
	:=	\mathbb N_x \big[ 1 - e^{ - K_{(s, T]}^f(w) - w_{T-s}(\theta g)} \big],
	\quad s \geq 0, x\in E.
\end{equation}
	Let
\[\begin{split}
	\tilde K(dr)
	&:=	\mathbf 1_{0\leq r<T} K(dr) + \delta_T (dr), \\
	\tilde f_r
	&:=	\mathbf 1_{0\leq r<T} f_r + \mathbf 1_{r=T} \big( K(\{T\}) f_T + \theta g \big).
\end{split}\]
	Then $(\tilde K, \tilde f) \in \mathcal K_T$ and \eqref{eq:mm-1} can be rewritten as
\[
	u_s^\theta(x)
	:=	\mathbb N_x \big[ 1 - e^{ - \tilde K_{(s, T]}^{\tilde f} (w)} \big],
	\quad s \geq 0, x\in E.
\]
	It follows from Lemma \ref{lem:EquatDescNmeas} that, for any
$\theta \geq 0$, $(s,x) \mapsto u^\theta_s(x)$ is the unique bounded positive solution to the equation
\[	
	u_s^\theta(x)
	=\mathbb P_{x} \Big[ \int_{(s,T]} \tilde f_r(\xi_{r-s}) \tilde K(dr) - \int_s^T (\Psi u_r^\theta)(\xi_{r-s}) dr \Big],
	\quad s\in [0,T],x\in E,
\]
	which is equivalent to
\begin{equation}
\label{eq:mewth}
	u_s^\theta(x)
    = \mathbb P_{x} \Big[\int_{(s,T]} f_r(\xi_{r-s}) K(dr) + \theta g(\xi_{T-s}) - \int_s^T (\Psi u^\theta_r)(\xi_{r-s})dr\Big].
\end{equation}
	We claim that $u_s^\theta(x)$ is differentiable in $\theta$ at $\theta = 0$.
	In fact, since
\begin{equation}\label{eq:sbN-1}
	\frac {|e^{-K_{(s, T]}^f(w)-w_{T-s}(\theta g)} - e^{-K_{(s, T]}^f(w)}|} {\theta}
	\leq w_{T-s}(g),
	\quad 0 < \theta \leq 1,
\end{equation}
	and
\begin{equation}
\label{eq:sbN-2}
	\mathbb N_x[w_{T-s}(g)]
	= S_{T-s} g (x)
	= \mathbb P_x[e^{\int_0^{T-s} \beta(\xi_r) dr} g(\xi_{T-s})]
	\leq e^{T \|\beta\|_\infty} \|g\|_\infty,
\end{equation}
	it follows from \eqref{eq:mm-1} and the dominated convergence theorem that
\begin{equation}
\label{eq:mm-3}
	\dot u_s(x)
	:= \partial_\theta|_{\theta=0} u_s^\theta(x)
	= \mathbb N_x[w_{T-s}(g) e^{-K^f_{(s,T]}(w)}]
	\leq e^{T \|\beta\|_\infty} \|g\|_\infty.
\end{equation}
	From \eqref{eq:mm-1}, we also have the following upper bound for $u_s^\theta (x)$ with $0 \leq \theta \leq 1$:
\begin{equation}\label{eq:uppBoundUThet}\begin{split}
	u_s^\theta(x)
	&\leq \mathbb N_x \Big[\int_{(s,T]} w_{r-s}(f_r) K(dr) + w_{T-s}(\theta g)\Big]\\
	&= \int_{(s,T]} \mathbb N_x[w_{r-s}(f_r)] K(dr ) + \mathbb N_x[w_{T-s}(\theta g)]\\
	&\leq e^{T\|\beta\|_\infty} \big( \|f\|_\infty K((0,T])+\|g\|_\infty \big)
	= : L_0.
\end{split}\end{equation}
	By elementary analysis, one can verify that, for each $L>0$, there exists a constant $C_{\psi,L}>0$ such that for each $x\in E$ and $0\leq z, z_0 \leq L$,
\begin{equation}\label{eq:lipschPhi}
	|\psi(x,z_0) - \psi(x,z)|
	\leq C_{\psi,L} |z - z_0|.
\end{equation}
	In fact, one can choose
	$C_{\psi,L} 
	:= \|\beta\|_\infty + 2L\|\alpha\|_\infty + \max\{L,1\} \sup_{x\in E} \int_{(0,\infty)} (y \wedge y^2) \pi(x,dy)$.
	This upper bound also implies that
\[
	|\psi' (x,z)|
	\leq C_{\psi,L},
	\quad x \in E, 0 \leq z \leq L.
\]
	Therefore, we can verify that  
$\mathbb P_x[\int_s^T (\Psi u^\theta_r)(\xi_{r-s}) dr]$ 
	is differentiable in $\theta$ at $\theta = 0$.
	In fact, by \eqref{eq:lipschPhi}, \eqref{eq:uppBoundUThet}, \eqref{eq:mm-1}, \eqref{eq:sbN-1} and \eqref{eq:sbN-2}, we have
\[\begin{split}
	\frac {|(\Psi u_r^\theta)(x)- (\Psi u^0_r)(x)|} {\theta}
	&\leq C_{\psi,L_0} \frac {|u_r^\theta(x) - u_r^0(x)|} {\theta}\\
	&\leq C_{\psi,L_0} \cdot e^{T \|\beta\|_\infty} \|g\|_\infty,
	\quad 0 \leq \theta \leq 1.
\end{split}\]
	Therefore, by the bounded convergence theorem, we have
\begin{equation}
\label{eq:mm-5}
\partial_\theta|_{\theta =0} \mathbb P_x \Big[ \int_s^T (\Psi u_r^\theta)(\xi_{r-s}) dr\Big]
	= \mathbb P_x \Big[ \int_s^T \psi'\big(\xi_{r-s},u_r^0(\xi_{r-s})\big) \dot u_r(\xi_{r-s})~dr \Big].
\end{equation}
	Now, taking $\partial_\theta|_{\theta=0}$ on the both sides of \eqref{eq:mewth}, we obtain from \eqref{eq:mm-5} that
\begin{equation}\label{eq:mm-2}
	\dot u_s (x)
	= \mathbb P_x\Big[ g(\xi_{T-s})- \int_s^T \psi'\big(\xi_{r-s},u_r^0(\xi_{r-s})\big) \dot u_r(\xi_{r-s})~ dr \Big],
	\quad s\in [0,T], x\in E.
\end{equation}
	Notice that the function $\dot u:(s,x) \mapsto \dot u_s(x)$ is bounded on $[0,T] \times E$ by $e^{T \|\beta\|_\infty} \|g\|_\infty$;
	$g$ is bounded on $E$ by $\|g\|_\infty$;
	and $\psi'(x, u_r^0(x))$ is bounded on $E$ by $C_{\psi,L_0}$.
	These bounds allow us to apply the classical Feynman-Kac formula, see \cite[Lemma A.1.5]{Dynkin1993Superprocesses} for example, to equation \eqref{eq:mm-2} and get that
\begin{equation}\label{eq:third}
	\dot u_0(x)
	=  \mathbb P_{x} [g(\xi_T) e^{-\int_0^T \psi'(\xi_s,u_s(\xi_s)) ds}].
\end{equation}
	The desired result when $g \in bp\mathscr B_E$ then follows from \eqref{eq:mm-3} and \eqref{eq:third}.
\par
	In the case when $g \in p\mathscr B_E$, we write $g^{(n)}(x) := \min\{g(x), n\}$ for $x \in E$ and $n \in \mathbb N$.
	Then, from what we have proved, we know that
\[
	\mathbb N_x[w_T(g^{(n)}) e^{- K_{(0,T]}^f(w)}]
	= {\mathbb P_x[g^{(n)}(\xi_T) e^{-\int_0^T \psi'(\xi_s,u_s(\xi_s))ds}]},
	\quad n \in \mathbb N.
\]
	Letting $n \to \infty$ we complete the proof.
\end{proof}
\begin{lem}\label{lem:spinImigrCondSpin}
 Let $T>0, k\in[0, T]$ and $(K,f) \in \mathcal K_T$. Let $\mu\in \mathcal M_f$ and $g\in p\mathscr B_E$ satisfy that $\mu(S_Tg) \in (0,\infty)$. Suppose that $\{(\xi_t)_{0\leq t\leq T}, (Y_t)_{0\leq t\leq T}, \mathbf n_T; \dot {\mathbf P}^{(g,T)}_\mu\}$ is a spine representation of $\mathbb N_\mu^{w_T(g)}$.
	Then, we have
\begin{equation}\label{eq:spinImigrCondSpin}
    -\log \dot{\mathbf P}^{(g,T)}_\mu[e^{-K^f_{(k, T]}(Y)}|\xi]
	=\int_k^T \psi'_0(\xi_{s-k},u_s(\xi_{s-k})) ds,
\end{equation}
	where the function $u$ is defined in Lemma \ref{lem:EquatDescNmeas}.
\end{lem}
\begin{proof}
	Throughout this proof,
we denote by $\mathbf n_{T-k}$ and $\mathbf m^\xi_{T-k}$
	the restriction of $\mathbf n_T$ and $\mathbf m^\xi_T$ on $[0,T-k] \times \mathcal W$ respectively.
	It follows from  properties of Poisson random measures that, conditioned on $\xi$, $\mathbf n_{T-k}$ is a Poisson random measure with mean measure $\mathbf m^\xi_{T-k}$.
\par
	It follows from \eqref{eq:defSpinImmigr} and Fubini's theorem that
\begin{equation}\label{eq:kfy_is_nkf}\begin{split}
    &K_{(k,T]}^f(Y)
    = \int_{(k,T]} Y_{r-k}(f_r) K(dr) \\
	&\quad = \int_{(k,T]} K(dr) \int_{(0,r-k] \times \mathcal M_f} w_{(r-k)-s}(f_r) \mathbf n_T(ds,dw)\\
	&\quad = \int_{(0,T-k] \times \mathcal M_f} \mathbf n_T(ds,dw) \int_{(k+s,T]} w_{r-(k+s)}(f_r) K(dr)\\
    &\quad = \int K^f_{(k + s, T]}(w)\mathbf n_{T-k}(ds,dw).
\end{split}\end{equation}
	Conditioned on $\xi$, it follows from Campbell's formula and Lemma \ref{lem:EquatDescNmeas} that
\[\begin{split}
    &-\log \dot{\mathbf P}^{(g,T)}_\mu[e^{-K^f_{(k, T]}(Y)}|\xi]
    = -\log \dot{\mathbf P}^{(g,T)}_\mu\big[e^{-\int K^f_{(k + s, T]}(w)\mathbf n_{T-k}(ds,dw)}\big|\xi\big]\\
	&\quad = \int(1 - e^{-K_{(k + s, T]}^f (w)})\mathbf m^\xi_{T-k}(ds,dw)\\
	&\quad = \int_0^{T-k} \Big(2\alpha(\xi_s) \mathbb N_{\xi_s}[1 - e^{-K_{(k + s, T]}^f(w)}] \\
	&\qquad\qquad\qquad + \int_{(0,\infty)} y \mathbf P_{y \delta_{\xi_s}}[1 - e^{-K_{(k + s, T]}^f(X)}] \pi(\xi_s,dy)\Big) ds\\
	&\quad = \int_0^{T-k} \Big(2\alpha(\xi_s) u_{k+s}(\xi_s) + \int_{(0,\infty)} (1 - e^{-yu_{k+s}(\xi_s)})y\pi(\xi_s,dy)\Big) ds
	\\&\quad =\int_0^{T-k} \psi'_0\big(\xi_s,u_{s+k}(\xi_s)\big) ds
	=\int_k^T \psi'_0\big(\xi_{s-k},u_s(\xi_{s-k})\big) ds,
\end{split}\]
	as desired.
\end{proof}
\begin{proof}[Proof of Theorem \ref{prop:sizBiasNMeas}]
    We only need to prove that
\[
	\{(Y_t)_{0<t\le T}; \dot{\mathbf P}^{(g,T)}_\mu\}
	\overset{f.d.d.}{=} \{(w_t)_{0<t\le T}; \mathbb N_\mu^{w_T(g)}\},
\]
	since both $\{Y_0; \dot{\mathbf P}^{(g,T)}_\mu\}$ and $\{w_0; \mathbb N_\mu^{w_T(g)}\}$ are deterministic with common value $\mathbf 0$.
	By Lemma \ref{lem:relSpinNMeas} and \ref{lem:spinImigrCondSpin}, we have
	\[\begin{split}
    &\mathbb N_\mu^{w_T(g)}\big[e^{-K_{(0, T]}^f(w)}\big]
	=\mathbb N_\mu[w_T(g)]^{-1} \mathbb N_\mu \big[w_T(g) e^{-K_{(0, T]}^f(w)}\big]\\
	&\quad =\mu(S_Tg)^{-1}  \mathbb P_\mu\big[g(\xi_T) e^{-\int_0^T \psi'(\xi_s,u_s(\xi_s)) ds}\big]\\
	&\quad =\mathbb P^{(g,T)}_{\mu}[e^{-\int_0^T \psi'_0(\xi_s, u_s(\xi_s)) ds}]
	=\dot {\mathbf P}^{(g,T)}_\mu\big[ \dot{\mathbf P}^{(g,T)}_\mu[e^{-K_{(0, T]}^f(Y)}|\xi] \big]\\
	&\quad = \dot{\mathbf P}^{(g,T)}_\mu[e^{-K_{(0, T]}^f(Y)}].
\end{split}\]
	The proof is complete.
\end{proof}

\subsection{Classical spine decomposition theorem}
\label{sec: classcal spine decomposition}
	Let $X=\{(X_t)_{t\geq 0}; (\mathbf P_\mu)_{\mu \in \mathcal M_f}\}$ be the $(\xi,\psi)$-superprocess introduced in Subsection \ref{sec: Main results} which satisfies Assumptions \ref{asp:1} and \ref{asp:2}.
	In this subsection, we will recover the classical spine decomposition theorem for $X$ which is developed previously in \cite{EckhoffKyprianouWinkel, EnglanderKyprianou2004Local, LiuRenSong2009LlogL}.

	It is clear that $\{(e^{-\lambda t}\phi(\xi_t) e^{\int_0^t \beta(\xi_s) ds}\mathbf 1_{t< \zeta})_{t \geq 0}; (\mathbb P_x)_{x\in E}\}$ is a non-negative martingale.
	Denote by $\{(\xi_t)_{t\geq 0}; (\dot{\mathbb P}_x)_{x\in E}\}$ the martingale transform (also known as Doob's $h$-transform) of $\{(\xi_t)_{t\geq 0}; (\mathbb P_x)_{x\in E}\}$ via this martingale in the sense that
\[
	\frac {d \dot{\mathbb P}_x|_{\mathscr F_t^\xi}} {d \mathbb P_x|_{\mathscr F_t^\xi}}
	:=e^{-\lambda t} \frac {\phi(\xi_t)} {\phi(x)} e^{\int_0^t \beta(\xi_s) ds} \mathbf 1_{t< \zeta},
	\quad x \in E,t \geq 0,
\]
	where $(\mathscr F^\xi_t)_{t\geq 0}$ is the natural filtration of the spatial motion $\xi$.
   It can be shown that
	(see \cite{KimSong2008Intrinsic} for example) $\{(\xi_t)_{t\geq 0}; (\dot{\mathbb P}_x)_{x\in E}\}$ is a time homogeneous Markov 
	process.
 Its semigroup is Doob's  $h$-transform of $(S_t)_{t\geq 0}$ with $h=\phi$ and its transition density with respect to the measure $m$ is
\[
	\dot{q}(t,x,y)
	:= e^{-\lambda t}\frac {\phi(y)} {\phi(x)} q(t,x,y),
	\quad x, y\in E, t> 0.
\]
	It can also be verified that $\phi(x)\phi^*(x)m(dx)$ is an invariant measure for $\{(\xi_t)_{t\geq 0}; (\dot{\mathbb P}_x)_{x\in E}\}$.

Recall that, for each $T>0$, $\mathbb P^{(\phi,T)}_\mu$ is defined as the $(e^{\int_0^T\beta(\xi_s)ds} \phi(\xi_T)\mathbf 1_{\zeta < T})$-transform of the measure 
	$\mu\mathbb P(\cdot):= \int_E \mathbb P_x(\cdot)\mu(dx)$.
\begin{lem}
\label{lem: measure of spine}
	Let $\mu \in \mathcal M_f^\phi$. Define
   a probability measure
	$\dot {\mathbb P}_\mu(\cdot):= \mu(\phi)^{-1}\int_E \phi(x)\dot {\mathbb P}_x(\cdot )\mu(dx)$.
Then, for each $T>0$, we have $\{(\xi_t)_{0\leq t\leq T}; \mathbb P_\mu^{(\phi,T)}\}\overset{law}{=}\{(\xi_t)_{0\leq t\leq T}; \dot{\mathbb P}_\mu\}$.
\end{lem}
\begin{proof}
	Let $A\in \mathscr F_T^\xi$. Then we have
\[\begin{split}
	&\mathbb P_\mu^{(\phi,T)}(A) = \frac{(\mu\mathbb P)[\mathbf 1_A e^{\int_0^T\beta(\xi_s)ds} \phi(\xi_T)\mathbf 1_{T < \zeta}]}{(\mu\mathbb P)[e^{\int_0^T\beta(\xi_s)ds} \phi(\xi_T)\mathbf 1_{T < \zeta}]}
	\\&=  \mu(\phi)^{-1}(\mu\mathbb P)[\mathbf 1_A e^{-\lambda T}e^{\int_0^T\beta(\xi_s)ds} \phi(\xi_T)\mathbf 1_{T < \zeta}]
	\\&=  \mu(\phi)^{-1}\int_E\mathbb P_x[\mathbf 1_A e^{-\lambda T}e^{\int_0^T\beta(\xi_s)ds} \phi(\xi_T)\mathbf 1_{T < \zeta}]~\mu(dx)
	\\&=  \mu(\phi)^{-1}\int_E\phi(x)\dot{\mathbb P}_x (A)~\mu(dx) = \dot{\mathbb P}_\mu(A). \qedhere
\end{split}\]
\end{proof}
\par
	Fix a measure $\mu \in \mathcal M^\phi_f$.
	Define $M_t := e^{-\lambda t}X_t(\phi)$ for each $t \geq 0$.
	It is clear that $\{(M_t)_{t\geq 0}; \mathbf P_\mu\}$ is a non-negative martingale.
	Let $\{(X_t)_{t\geq 0}; \mathbf P_\mu^M\}$ be the martingale transform of $\{(X_t)_{t\geq 0}; \mathbf P_\mu\}$ via this martingale in the sense that
\[
	\frac {d \mathbf P_\mu^M |_{\mathscr F_t^X}} {d \mathbf P_\mu |_{\mathscr F_t^X}}
	:= \frac {M_t} {\mu(\phi)},
	\quad t \geq 0.
\]
\par
	We now give the classical spine decomposition theorem:
\begin{thm}[{Spine decomposition, \cite{EckhoffKyprianouWinkel, EnglanderKyprianou2004Local,LiuRenSong2009LlogL}}]
\label{thm:spinDec}
	Suppose that Assumptions \ref{asp:1} and \ref{asp:2} hold.
	Let $\mu \in \mathcal M_f^\phi$.
Let the spine immigration
	$\{(\xi_t)_{t\geq 0}, (Y_t)_{t\geq 0}, \mathbf n; \dot {\mathbf P}_\mu\}$ be defined as follows:
\begin{enumerate}
\item
	The \emph{spine process} $\{(\xi_t)_{t\geq 0}; \dot{\mathbf P}_\mu\}$ is a copy of $\{(\xi_t)_{t\geq 0}; \dot{\mathbb P}_{\mu}\}$.
\item
	The \emph{immigration process} $\{(Y_t)_{t\geq 0}; \dot{\mathbf P}_\mu\}$ is an $\mathcal M_f$-valued process given by
\[
	Y_t
	:= \int_{(0,t] \times \mathcal W} w_{t-s} \mathbf n(ds,dw),
	\quad t \geq 0,
\]
	where, conditioned on $\xi$, $\mathbf n$ is a Poisson random measure on $[0,\infty) \times \mathcal W$ with mean measure
\[
	\mathbf m^\xi(ds,dw)
:=  2 \alpha(\xi_s)\mathbb N_{\xi_s}(dw)\cdot ds
+ \int_{(0,\infty)} y \mathbf P_{y \delta_{\xi_s}}(X\in dw) \pi(\xi_s,dy)\cdot ds.
\]
\end{enumerate}
	Then,
$
    \{(X_t)_{t\geq 0}; \mathbf P_\mu^M\}
    \overset{f.d.d.}{=} \{(X_t + Y_t)_{t\geq 0}; \mathbf P_\mu \otimes\dot{\mathbf P}_\mu\}.
$
\end{thm}	

\begin{proof}
	Fix $T>0$.
	We only need to show that
\[
	\{(X_t)_{t\leq T}; \mathbf P_\mu^M\}
	\overset{f.d.d.}{=} \{(X_t + Y_t)_{t\leq T}; \mathbf P_\mu\otimes\dot{\mathbf P}_\mu\}.
\]
From Lemma \ref{lem: measure of spine},
	we can verify that
\begin{equation}
\label{eq: consisten of dotPphiT}
	\{(Y_t)_{t\leq T}; \dot{\mathbf P}_\mu\}
	\overset{f.d.d.}{=} \{(Y_t)_{t\leq T}; \dot{\mathbf P}^{(\phi,T)}_\mu\}.
\end{equation}
Also it follows easily from the definitions of $\mathbf P_\mu^M$ and $\mathbf P_\mu^{X_T(\phi)}$ that
\begin{equation}
\label{eq: consisten of PXTphi}
	\{(X_t)_{t\leq T}; \mathbf P_\mu^M\}
	\overset{f.d.d.}{=} \{(X_t)_{t\leq T}; \mathbf P_\mu^{X_T(\phi)}\}.	
\end{equation}
	The desired result then follows from Corollary \ref{cro: spine decomposition}.
\end{proof}
\begin{rem}
\label{rem: consistency}
	Lemma \ref{lem: measure of spine} indicates that $\{(\xi_t)_{0\leq t\leq T};\mathbb P_\mu^{(\phi, T)}\}$
		are consistent.
	From \eqref{eq: consisten of PXTphi} we have that $\{(X_s)_{0\leq s\leq T};\mathbf P_\mu^{X_T(\phi)}\}$
		are consistent.
	From \eqref{eq: consisten of dotPphiT} we have that $\{(Y_t)_{t\leq T}; \dot{\mathbf P}^{(\phi,T)}_\mu\}$
		are consistent.
	According to Theorem \ref{prop:sizBiasNMeas}, we have $\{(w_t)_{t\leq T}; \mathbb N^{w_T(\phi)}_\mu\} \overset{f.d.d}{=}\{(Y_t)_{t\leq T}; \dot{\mathbf P}^{(\phi,T)}_\mu\}$ which implies that $\{(w_t)_{t\leq T}; \mathbb N^{w_T(\phi)}_\mu\}$ are
	also consistent.
\end{rem}

\section{2-spine decomposition of critical superprocesses}

\subsection{Second moment formula}
\label{sec: Second moment formula}
Let $X=\{(X_t)_{t\geq 0}; (\mathbf P_\mu)_{\mu \in \mathcal M_f}\}$ be the $(\xi,\psi)$-superprocess introduced in Subsection \ref{sec: Main results} which satisfies Assumptions \ref{asp:1}, \ref{asp:2} and \ref{asp:3}.
In this subsection, we give a second moment formula for  superprocesses.
\begin{lem}
\label{lem:1stMomSizBiasSupProc}
	Suppose that Assumptions  \ref{asp:1}, \ref{asp:2} and \ref{asp:3} hold.
	Let $g,f\in bp\mathscr B_E^\phi, \mu \in \mathcal M^\phi_f$ 
	and $t\geq 0$.
	Suppose that $\{(\xi_s)_{0\leq s\leq t}, (Y_s)_{0\leq s\leq t}, \mathbf n_t; \dot {\mathbf P}^{(g,t)}_\mu\}$ is the spine representation of $\mathbb N_\mu^{w_t(g)}$.
	Then,
\[
	\dot{\mathbf P}_{\mu}^{(g,t)} [Y_t(f)|\xi]
    = \int_0^t A(\xi_s) \cdot (S_{t-s} f) (\xi_s) ds
	\leq t \|A \phi\|_{\infty} \|\phi^{-1}f\|_\infty,
	\quad \dot{\mathbf P}^{(g,t)}_{\mu} \text{-a.s.}.
\]
\end{lem}

\begin{proof}
	Define $G(s,w) := \mathbf 1_{s\leq t} w_{t-s}(f)$ for all $s \geq 0$ and $w \in \mathcal W$.
	Under Assumption \ref{asp:3}, it is clear from \eqref{eq:meanMeasImmigr} that
\[\begin{split}
	\mathbf m_t^\xi(G)
	&= \int_0^t 2 \alpha(\xi_s) \mathbb N_{\xi_s}[w_{t-s}(f)]ds + \int_0^t ds \int_{(0,\infty)} y \mathbf P_{y \delta_{\xi_s}}[X_{t-s}(f)] \pi(\xi_s,dy) \\
	&= \int_0^t 2 \alpha(\xi_s)\cdot (S_{t-s}f)(\xi_s) ds + \int_0^t ds \int_{(0,\infty)} y^2 \cdot (S_{t-s} f)(\xi_s) \pi(\xi_s, dy)\\
    &= \int_0^t A(\xi_s) \cdot (S_{t-s} f) (\xi_s) ds.
\end{split}\]
	Since, conditioned on $\xi$,  $\{\mathbf n_t;\dot{\mathbf P}^{(g,t)}_\mu\}$ is a
	Poisson random measure on $[0,t] \times \mathcal W$
	with mean measure $\mathbf m_t^\xi$, we conclude from Campbell's theorem that
\[
	\dot{\mathbf P}^{(g,t)}_{\mu}[Y_t(f)|\xi]
	= \dot{\mathbf P}^{(g,t)}_{\mu} [\mathbf n_t(G)|\xi]
	= \mathbf m_t^\xi(G)
    = \int_0^t A(\xi_s) \cdot (S_{t-s} f) (\xi_s) ds,
	\quad \dot{\mathbf P}^{(g,t)}_{\mu} \text{-a.s.}.
\]
	Noticing that
\[
   	\int_0^t A(\xi_s) \cdot (S_{t-s} f) (\xi_s) ds
	 = \int_0^t [(A \phi) \phi^{-1} S_{t-s}(\phi \cdot \phi^{-1} f)](\xi_s)ds
	\leq t \|A\phi\|_\infty \|\phi^{-1}f\|_\infty,
\]
	we have our result as desired.
\end{proof}
\begin{prop}\label{prop:covanrance}
	Under Assumptions \ref{asp:1}, \ref{asp:2} and \ref{asp:3}, for all $g,f\in b\mathscr B^\phi_E$, $\mu \in \mathcal M^\phi_f$
	and $t\geq 0$,
	we have that $X_t(g)X_t(f)$ is integrable with respect to $\mathbf P_\mu$ and
\begin{equation}\label{eq:covanrance}
	\mathbf P_\mu[X_t(g) X_t( f)]
	= \langle\mu, S_t g \rangle \langle\mu, S_t f\rangle + \langle\mu, \phi\rangle \dot{\mathbb P}_{\mu} \Big[(\phi^{-1} g)(\xi_t) \int_0^t A(\xi_s) \cdot (S_{t-s} f)(\xi_s) ds\Big].
\end{equation}
\end{prop}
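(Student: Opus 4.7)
The plan is to combine the Poissonian representation of $X$ (Lemma \ref{lem:lislem}) with the second-moment identity for Poisson random measures (Lemma \ref{lem:covPoissRandMeas}), and then apply the spine decomposition (Theorem \ref{prop:sizBiasNMeas}) to evaluate the cross term. By writing $g = g^+ - g^-$ and $f = f^+ - f^-$ and using linearity it suffices to assume $g, f \in bp\mathscr B^\phi_E$; moreover, by replacing $g$ with $g + \epsilon \phi$ and letting $\epsilon \downarrow 0$ we may assume $m(g) > 0$ so that Theorem \ref{prop:sizBiasNMeas} is directly applicable.

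Since $X_t(g) = \mathcal N[w_t(g)]$ and $X_t(f) = \mathcal N[w_t(f)]$ with $\{\mathcal N; \mathbf P_\mu\}$ a Poisson random measure of mean $\mathbb N_\mu$, Lemma \ref{lem:covPoissRandMeas} gives
\[
\mathbf P_\mu[X_t(g)X_t(f)] = \mathbb N_\mu[w_t(g)]\cdot\mathbb N_\mu[w_t(f)] + \mathbb N_\mu[w_t(g)w_t(f)].
\]
Campbell's formula combined with the known first-moment identity $\mathbf P_\mu[X_t(g)] = \mu(S_t g)$ yields $\mathbb N_\mu[w_t(g)] = \mu(S_t g)$, so the first term already reproduces $\langle\mu, S_t g\rangle\langle\mu, S_t f\rangle$. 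For the cross term, invoke Theorem \ref{prop:sizBiasNMeas} with $T = t$ to identify $\mathbb N_\mu^{w_t(g)}$ in law with the spine immigration $\{(Y_s)_{s\leq t}; \dot{\mathbf P}_\mu^{(g,t)}\}$, which gives
\[
\mathbb N_\mu[w_t(g) w_t(f)] = \mu(S_t g) \cdot \dot{\mathbf P}_\mu^{(g,t)}[Y_t(f)].
\]
Conditioning on the spine and applying Lemma \ref{lem:1stMomSizBiasSupProc} then yields
\[
\dot{\mathbf P}_\mu^{(g,t)}[Y_t(f)] = \mathbb P_{\mu S_t g}^{(g,t)}\Big[\int_0^t (AS_{t-s}f)(\xi_s)\,ds\Big].
\]

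The final step is a short Girsanov-type reconciliation between the two changes of measure. From their Radon–Nikodym densities one reads off
\[
S_t g(x)\,\mathbb P_x^{(g,t)}[H(\xi)] = \phi(x)\,\dot{\mathbb P}_x\big[\phi^{-1}(\xi_t) g(\xi_t) H(\xi)\big]
\]
for every positive $\mathscr F_t^\xi$-measurable functional $H$, since both sides equal the same $\mathbb P_x$-expectation of $g(\xi_t) e^{\int_0^t \beta(\xi_s) ds} H(\xi)$. Integrating against $\mu(dx)$, the left-hand side becomes $\mu(S_t g)\,\mathbb P_{\mu S_t g}^{(g,t)}[H]$ while the right-hand side becomes $\mu(\phi)\,\dot{\mathbb P}_{\mu\phi}[\phi^{-1}(\xi_t) g(\xi_t) H]$. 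Specialising to $H(\xi) = \int_0^t (AS_{t-s}f)(\xi_s)\,ds$ converts the cross term into
\[
\mu(\phi)\,\dot{\mathbb P}_{\mu\phi}\Big[\phi^{-1}g(\xi_t)\int_0^t (AS_{t-s}f)(\xi_s)\,ds\Big],
\]
which is exactly the second summand in \eqref{eq:covanrance}.

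The main obstacle is bookkeeping: there are two distinct Girsanov transforms ($\mathbb P^{(g,t)}$ and $\dot{\mathbb P}$) and two distinct weighted initial distributions ($\mu S_t g$ and $\mu\phi$) to juggle, and the key observation is that the ratio of the two path densities is precisely $\phi^{-1}(\xi_t) g(\xi_t) \cdot \phi(x)/S_t g(x)$, which absorbs the mismatch between the weights. Integrability throughout is a routine consequence of the bounds $|g|, |f| \leq \phi\cdot \max\{\|\phi^{-1}g\|_\infty, \|\phi^{-1}f\|_\infty\}$, Assumption \ref{asp:3} (giving $AS_{t-s}f \leq \|A\phi\|_\infty \|\phi^{-1}f\|_\infty$), and $\mu(\phi) < \infty$; the finiteness of $\mathbf P_\mu[X_t(g)X_t(f)]$ then follows by Cauchy–Schwarz once the identity is established for $|g|$ and $|f|$.
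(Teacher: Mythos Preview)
Your proposal is correct and follows essentially the same route as the paper: Poissonian representation plus Lemma \ref{lem:covPoissRandMeas} to split off $\langle\mu,S_tg\rangle\langle\mu,S_tf\rangle$, then Theorem \ref{prop:sizBiasNMeas} and Lemma \ref{lem:1stMomSizBiasSupProc} to evaluate $\mathbb N_\mu[w_t(g)w_t(f)]$ via the spine. The only cosmetic differences are that the paper carries out the Girsanov reconciliation pointwise (computing $\mathbb N_x[w_t(g)w_t(f)]$ and then integrating in $\mu$) rather than at the level of $\mu$, and it dispatches the case $m(g)=0$ by observing both sides vanish rather than by perturbing to $g+\epsilon\phi$.
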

\begin{proof}
	We first consider the case when $g,f\in bp\mathscr B^\phi_E$.
	In this case, the right hand of \eqref{eq:covanrance} is finite.
	Actually,
	by Lemma \ref{lem:1stMomSizBiasSupProc},
	the right side of \eqref{eq:covanrance} is less than or equal to
\[\begin{split}
	&\langle \mu, S_tg \rangle \langle\mu,S_t f \rangle +
\langle \mu,\phi\rangle\dot{\mathbb P}_\mu\big[(\phi^{-1}g)(\xi_t)\big] t \|A\phi\|_\infty \|\phi^{-1}f\|_\infty\\
	&\quad \leq \langle \mu,\phi \rangle^2 + \langle \mu,\phi\rangle t \|A\phi\|_\infty \|\phi^{-1}g\|_\infty\|\phi^{-1}f\|_\infty
	< \infty.
\end{split}\]
	We can also assume that $m(g)>0$.
 Since if $g\in bp\mathscr B_E$ with $m(g)=0$, then according to \eqref{eq: density of S}, \eqref{eq: mean formula} and Lemma \ref{lem: measure of spine}, we have
\begin{align}
	S_t g(x) = \int_{E} q(t,x,y) g(y)m(dy) = 0,
	&\quad t > 0, x\in E,
	\\\mathbf P_\mu [X_t(g)] = \mu(S_t g)  = 0,
	&\quad \mu \in \mathcal M_f, t>0,
	\\\dot{\mathbb P}_\mu [\phi^{-1}g(\xi_t)] = \mathbb P_\mu^{(\phi,t)}[\phi^{-1}g(\xi_t)]
	= \frac{\mu(S_t g)}{\mu(\phi)}
	= 0,
	&\quad \mu \in \mathcal M_f, t>0.
\end{align}
These imply that the both sides of \eqref{eq:covanrance} are $0$.
\par
	Now in the case when $g,f\in bp\mathscr B^\phi_E$ and $m(g)>0$, from
    Theorem \ref{prop:sizBiasNMeas} and
	Lemma \ref{lem:1stMomSizBiasSupProc} we know that, for each $x\in E$,
	\[\begin{split}
	\mathbb N_x^{w_t(g)}[w_t(f)]
	&= \dot{\mathbf P}_{\delta_x}^{(g,t)}[Y_t(f)]
	= \dot{\mathbf P}_{\delta_x}^{(g,t)} \big[\dot{\mathbf P}_{\delta_x}^{(g,t)}[Y_t(f) | \xi]\big]\\
	&= \dot{\mathbf P}_{\delta_x}^{(g,t)} \Big[\int_0^t  A(\xi_s) \cdot (S_{t-s}  f)(\xi_s) ds\Big]
	= \mathbb P_x^{(g,t)} \Big[\int_0^t  A (\xi_s)\cdot (S_{t-s} f)(\xi_s) ds\Big]\\
	&= S_t g(x)^{-1} \mathbb P_{x} \Big[g(\xi_t) e^{\int_0^t \beta(\xi_s) ds} \int_0^t A(\xi_s)\cdot (S_{t-s} f)(\xi_s) ds\Big].
\end{split}\]
	Therefore,
\begin{equation}\begin{split}
	\mathbb N_x[w_t(g) w_t( f)]
	&= \mathbb N_x[w_t(g)] \mathbb N_x^{w_t(g)}[w_t(f)]\\
	&= \mathbb P_x \Big[g(\xi_t) e^{\int_0^t \beta(\xi_s) ds} \int_0^t A(\xi_s)\cdot (S_{t-s} f)(\xi_s) ds\Big]\\
	&= \phi(x) \dot{\mathbb P}_x \Big[(\phi^{-1} g)(\xi_t) \int_0^t A(\xi_s)\cdot (S_{t-s} f)(\xi_s)ds \Big].
\end{split}\end{equation}
	Integrating with $\mu \in \mathcal M_f^{\phi}$, we have
\begin{equation}
\label{eq: sec moment for N measure}
	\mathbb N_\mu[w_t(g)w_t(f)]
	= \langle\mu, \phi\rangle \dot{\mathbb P}_{\mu} \Big[(\phi^{-1} g)(\xi_t) \int_0^t A(\xi_s)\cdot (S_{t-s} f)(\xi_s) ds\Big].
\end{equation}
It then follows from Lemmas \ref{lem: Kuznetsov measures} and
	\ref{lem:covPoissRandMeas} that
\[\begin{split}
	\mathbf P_\mu[ X_t( g) X_t( f)]
	&= \mathbb N_\mu[ w_t( g)] \mathbb N_\mu[ w_t( f)] + \mathbb N_\mu[ w_t( g) w_t( f)]\\
	&= \langle \mu, S_t g\rangle \langle \mu, S_t f\rangle + \langle \mu, \phi\rangle \dot{\mathbb P}_{\mu} \Big[ ( \phi^{-1} g)( \xi_t) \int_0^t (A S_{t-s} f)( \xi_s) ds \Big]
\end{split}\]
	as desired.
	For the more general case when $g,f\in b\mathscr B^\phi_E$, we only need to consider their positive and negative parts.
\end{proof}

\subsection{2-Spine decomposition theorem}
\label{size-biased-equation}
	Let $X=\{(X_t)_{t\geq 0}; (\mathbf P_\mu)_{\mu \in \mathcal M_f}\}$ be the $(\xi,\psi)$-superprocess introduced in Subsection \ref{sec: Main results} which satisfies Assumptions \ref{asp:1}, \ref{asp:2} and \ref{asp:3}.
	In this subsection, we will prove the 2-spine decomposition theorem for  superprocesses, i.e., Theorem \ref{prop:2-spine-decomposition}.
	
	First, we give a lemma which says that $\mathbb N_\mu^{w_T(\phi)^2}$ --- the $w_T(\phi)^2$-transform of $\mathbb N_\mu$, and $\ddot{\mathbb P}^{(T)}_\mu$ --- the $(\int_0^T (A\phi)(\xi_s) ds)$-transform of $\dot{\mathbb P}_{\mu}$, are both well defined probability measures.
\begin{lem}
	$\mathbb N_\mu[w_T(\phi)^2]
	= \mu(\phi) \dot{\mathbb P}_{\mu} [ \int_0^T (A\phi)(\xi_s) ds]\in (0,\infty)$ for all $\mu \in \mathcal M_f^\phi$ and $T>0$.
\end{lem}
\begin{proof}
	According to \eqref{eq: sec moment for N measure}, we have
\begin{equation}
	\mathbb N_\mu[w_T(\phi)^2 ]
	= \mu(\phi) \dot{\mathbb P}_\mu \Big[ \int_0^T( A \phi)(\xi_s)ds \Big] \leq \mu(\phi) T \|A\phi\|_\infty <  \infty.
\end{equation}
	According to $\mathbb N_\mu[w_T(\phi)] = \mu(\phi) > 0$, we must have $\mathbb N_\mu[w_T(\phi)^2 ] > 0$.
\end{proof}
\begin{rem}\label{rem:chain-rule}
	Note that $\mathbb N^{w_T(\phi)^2}_\mu$ is also the $w_T(\phi)$-transform of $\mathbb N^{w_T(\phi)}_\mu$.
	In fact, the size-biased transforms satisfy the following chain rule:
	If $g,f$ are non-negative measurable functions on some measure space $(D,\mathscr F_D,\mathbf D)$ with $\mathbf D(g) \in (0,\infty)$ and $\mathbf D(gf) \in (0,\infty)$.
	Denoted by $\mathbf D^g$ the $g$-transform of $\mathbf D$, then $(\mathbf D^g)^f = \mathbf D^{gf}$, i.e., the $f$-transform of $\mathbf D^g$ is the $gf$-transform of $\mathbf D$.
	This is true because it is easy to see that
\[
	\mathbf D^{gf}(ds)
	:= \frac{g(s) f(s) \mathbf D(ds)}{\mathbf D[gf]}
	= \frac{f(s) \mathbf D^g(ds)}{\mathbf D^g[f]}
    = (\mathbf D^g)^f (ds),
	\quad s\in S.
\]
\end{rem}

 For each $\mu \in \mathcal M_f^\phi$, let the spine immigration $\{(\xi_t)_{t\geq 0}, (Y_t)_{t\geq 0}, \mathbf n; \dot {\mathbf P}_\mu\}$ be given by Theorem \ref{thm:spinDec}.
	We first state a property of $\{Y; \dot{\mathbf P}_\mu\}$, which is needed later.
\begin{lem}\label{lem:Y-is-immortal}
	$\dot{\mathbf P}_\mu(Y_t = \mathbf 0) = 0$ for all $\mu \in \mathcal M_f^\phi$ and $t>0$.
\end{lem}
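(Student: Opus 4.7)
The plan is to reduce the statement to an immediate consequence of the spine decomposition in Theorem \ref{prop:sizBiasNMeas}, by choosing the test function $g = \phi$ and the horizon $T = t$. Applying this theorem requires $\phi \in bp\mathscr B_E^\phi$ and $m(\phi) > 0$: the first holds because $\phi^{-1}\phi = \mathbf 1_E \in bp\mathscr B_E$, and the second holds because $\phi$ is strictly positive and $m$ has full support on $E$. Moreover, since $\lambda = 0$, we have $\mathbb N_\mu(w_t(\phi)) = \langle \mu, S_t\phi\rangle = \langle \mu,\phi\rangle$, which belongs to $(0,\infty)$ exactly because $\mu \in \mathcal M_f^\phi$, so the size-biased measure $\mathbb N_\mu^{w_t(\phi)}$ is well defined.

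With these verifications in hand, I would combine Theorem \ref{prop:sizBiasNMeas} with the remark right before it, namely
\[
	\{(Y_t)_{0\le t\le T}; \dot{\mathbf P}_\mu\}
	\overset{f.d.d.}{=} \{(Y_t)_{0\le t\le T}; \dot{\mathbf P}_\mu^{(\phi,T)}\},
\]
and specialize to $T = t$, yielding the one-dimensional equality
\[
	\{Y_t;\dot{\mathbf P}_\mu\}
	\overset{d}{=} \{w_t; \mathbb N_\mu^{w_t(\phi)}\}.
\]
Consequently,
\[
	\dot{\mathbf P}_\mu(Y_t = \mathbf 0)
	= \mathbb N_\mu^{w_t(\phi)}(w_t = \mathbf 0)
	= \frac{1}{\mu(\phi)}\int_{\{w_t = \mathbf 0\}} w_t(\phi)\,\mathbb N_\mu(dw).
\]
On the event $\{w_t = \mathbf 0\}$, the integrand satisfies $w_t(\phi) = \int_E \phi\,dw_t = 0$, so the right-hand side vanishes and the lemma follows.

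There is no genuine obstacle here: the argument is just the observation that size-biasing by $w_t(\phi)$ kills every trajectory whose state at time $t$ is the null measure, together with the identification of the law of $Y_t$ provided by Theorem \ref{prop:sizBiasNMeas}. The only routine checks are the applicability of that theorem at $g = \phi$ and the finiteness and positivity of $\mathbb N_\mu(w_t(\phi))$, both of which follow immediately from Assumption \ref{asp:2} and the hypothesis $\mu\in\mathcal M_f^\phi$.
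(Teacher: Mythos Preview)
Your proof is correct and follows essentially the same approach as the paper: invoke Theorem \ref{prop:sizBiasNMeas} with $g=\phi$ and $T=t$ to identify the law of $Y_t$ under $\dot{\mathbf P}_\mu$ with $\mathbb N_\mu^{w_t(\phi)}$, then observe that the size-biasing weight $w_t(\phi)$ vanishes on $\{w_t=\mathbf 0\}$. The paper's proof is the one-line version of exactly this, writing the event as $\{w_t(\phi)=0\}$ (equivalent to $\{w_t=\mathbf 0\}$ since $\phi>0$) and omitting the routine hypothesis checks you spelled out.
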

\begin{proof}
	According to Theorem \ref{prop:sizBiasNMeas}, we have
\[
	\dot{\mathbf P}_\mu (Y_t = 0)
	= \mathbb N^{w_t(\phi)}_\mu ( w_t(\phi) = 0 )
	= \langle \mu, \phi \rangle^{-1} \mathbb N_\mu [ w_t(\phi) \mathbf 1_{w_t(\phi) = 0}]
	= 0. \qedhere
\]
\end{proof}
\par
	The proof of Theorem \ref{prop:2-spine-decomposition} relies on the following lemma:
\begin{lem}
\label{lem:key-lemma}
	For any $\mu\in \mathcal M_f^\phi$, $T>0$ and $(K,f)\in \mathcal K_T$, we have
\[\begin{split}
    &\dot{\mathbf P}_\mu [Y_T(\phi) e^{-K_{(0, T]}^f(Y)}|\xi]\\
    &\quad =\dot{\mathbf P}_\mu[e^{-K_{(0, T]}^f(Y)}|\xi]\int_0^T  (A\phi)(\xi_s)\dot{\mathbf P}_{\delta_{\xi_s}}[e^{-K_{(s, T]}^f(Y)}]\widetilde {\mathbf P}_{\xi_s}[e^{-K^f_{(s, T]}(X)}]ds,
\end{split}\]
where $\widetilde {\mathbf P}_{x}$ is defined by \eqref{eq:def-tilde-P} for each $x\in E$.
\end{lem}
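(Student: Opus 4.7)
The plan is to express $Y_T(\phi)$ and $K^f_{(0,T]}(Y)$ as integrals against the Poisson random measure $\mathbf n$ that drives the spine immigration, and then invoke the size-biased identity of Lemma \ref{lem:size-biased-lemma} conditionally on $\xi$. Set
\[
G(s,w) := \mathbf 1_{0 < s \leq T}\, w_{T-s}(\phi), \qquad H(s,w) := \mathbf 1_{0 < s \leq T}\, K^f_{(s,T]}(w),
\]
so that $Y_T(\phi) = \mathbf n(G)$ and, by the rearrangement already carried out in \eqref{eq:kfy_is_nkf}, $K^f_{(0,T]}(Y) = \mathbf n(H)$. Conditioned on $\xi$, $\mathbf n$ is a Poisson random measure with mean $\mathbf m^\xi$, and $\mathbf m^\xi(G) = \int_0^T (A\phi)(\xi_s)\,ds \leq T\|A\phi\|_\infty < \infty$ under Assumption \ref{asp:3}, so $G\in L^1(\mathbf m^\xi)$ and Lemma \ref{lem:size-biased-lemma} gives
\[
\dot{\mathbf P}_\mu\bigl[Y_T(\phi)\,e^{-K^f_{(0,T]}(Y)}\bigm|\xi\bigr] = \dot{\mathbf P}_\mu\bigl[e^{-K^f_{(0,T]}(Y)}\bigm|\xi\bigr]\cdot \mathbf m^\xi\bigl[G\,e^{-H}\bigr].
\]
The remaining task is to identify $\mathbf m^\xi[G\,e^{-H}]$ with the integral on the right-hand side of the statement.

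Expanding via the definition of $\mathbf m^\xi$, the quantity $\mathbf m^\xi[G\,e^{-H}]$ splits into a piece coming from the Kuznetsov measures and a piece coming from the superprocess, which will be treated separately. For the Kuznetsov piece $\int_0^T 2\alpha(\xi_s)\,\mathbb N_{\xi_s}[w_{T-s}(\phi)\,e^{-K^f_{(s,T]}(w)}]\,ds$, the spine decomposition (Theorem \ref{prop:sizBiasNMeas} with $g=\phi$ on horizon $T-s$), combined with the criticality relation $S_{T-s}\phi=\phi$, yields
\[
\mathbb N_{\xi_s}\bigl[w_{T-s}(\phi)\,e^{-K^f_{(s,T]}(w)}\bigr] = \phi(\xi_s)\,\dot{\mathbf P}_{\delta_{\xi_s}}\bigl[e^{-K^f_{(s,T]}(Y)}\bigr].
\]
For the superprocess piece $\int_0^T\!\int_{(0,\infty)} y\,\mathbf P_{y\delta_{\xi_s}}[X_{T-s}(\phi)\,e^{-K^f_{(s,T]}(X)}]\,\pi(\xi_s,dy)\,ds$, the martingale change of measure gives $\mathbf P_{y\delta_{\xi_s}}[X_{T-s}(\phi)\,e^{-K^f_{(s,T]}(X)}] = y\phi(\xi_s)\,\mathbf P^M_{y\delta_{\xi_s}}[e^{-K^f_{(s,T]}(X)}]$, and then Theorem \ref{thm:spinDec} decomposes this as $y\phi(\xi_s)\,\mathbf P_{y\delta_{\xi_s}}[e^{-K^f_{(s,T]}(X)}]\cdot\dot{\mathbf P}_{\delta_{\xi_s}}[e^{-K^f_{(s,T]}(Y)}]$, using that $\dot{\mathbb P}_{y\phi(\xi_s)\delta_{\xi_s}} = \dot{\mathbb P}_{\xi_s}$ does not depend on $y$, so the spine immigration factor is the same one appearing in the Kuznetsov piece.

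Adding the two pieces and factoring out the common $\phi(\xi_s)\,\dot{\mathbf P}_{\delta_{\xi_s}}[e^{-K^f_{(s,T]}(Y)}]$ leaves in the bracket
\[
2\alpha(\xi_s) + \int_{(0,\infty)} y^2 \mathbf P_{y\delta_{\xi_s}}\bigl[e^{-K^f_{(s,T]}(X)}\bigr]\,\pi(\xi_s,dy),
\]
which, since $\mathbf P_{\mathbf 0}[e^{-K^f_{(s,T]}(X)}]=1$, rewrites by \eqref{eq:def-tilde-P} as $A(\xi_s)\,\widetilde{\mathbf P}_{\xi_s}[e^{-K^f_{(s,T]}(X)}]$; the degenerate case $A(\xi_s)=0$ causes both sides to vanish and is absorbed by the convention in \eqref{eq:def-tilde-P}. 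Multiplying back by $\phi(\xi_s)$ produces the desired factor $(A\phi)(\xi_s)$, and substituting into the conditional Campbell identity above gives the claim. The main obstacle I anticipate is the bookkeeping involved in simultaneously re-anchoring the time horizon from $T$ to $T-s$ in both spine decomposition statements and keeping the dependence on the starting mass $y$ under control; once these identifications are correctly set up, the remaining manipulations are essentially algebraic.
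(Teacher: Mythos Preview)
Your proposal is correct and follows essentially the same route as the paper's proof: both express $Y_T(\phi)$ and $K^f_{(0,T]}(Y)$ as integrals of $G$ and $H$ against the conditional Poisson random measure $\mathbf n$, apply Lemma~\ref{lem:size-biased-lemma} conditionally on $\xi$, and then evaluate $\mathbf m^\xi[G e^{-H}]$ by splitting into the Kuznetsov and superprocess contributions and invoking Theorems~\ref{prop:sizBiasNMeas} and~\ref{thm:spinDec} respectively before recombining via the definition of $\widetilde{\mathbf P}_x$. The only cosmetic difference is that you verify $G\in L^1(\mathbf m^\xi)$ by the direct computation $\mathbf m^\xi(G)=\int_0^T (A\phi)(\xi_s)\,ds\le T\|A\phi\|_\infty$, whereas the paper cites Lemma~\ref{lem:1stMomSizBiasSupProc} for the same bound.
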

\begin{proof}
	Define $G(s,w) := \mathbf 1_{s\leq T}w_{T-s}(\phi)$ for all $s\geq 0$ and $w\in\mathcal W$.
	Notice that from \eqref{eq:kfy_is_nkf}, under the probability $\dot {\mathbf P}_\mu$,
	we have $Y_T(\phi) = \mathbf n(G)$ and
	$K_{(0, T]}^f(Y) = \mathbf n(K^f_{(s,T]}(w))$.
	From Lemmas \ref{lem:1stMomSizBiasSupProc} and \ref{lem:Y-is-immortal} we know that
\[
	0
	< \dot{\mathbf P}_{\mu}[Y_T(\phi)|\xi]
	< \infty,
	\quad \dot{\mathbf P}_\mu \text{-a.s.}.
\]
	Therefore, we can apply Lemma \ref{lem:size-biased-lemma} to the conditioned
	Poisson random measure $\mathbf n$, and get
\begin{equation}\label{eq:condSizBiasEquatSpinImmigr}
    \dot{\mathbf P}_\mu [\mathbf n(G) e^{-\mathbf n(K^f_{(s, T]}(w))}|\xi]
	=\dot{\mathbf P}_\mu[e^{-\mathbf n(K^f_{(s, T]}(w))}|\xi]\mathbf m^\xi[Ge^{-K^f_{(s, T]}(w)}].
\end{equation}
	It is clear from the definitions of $\mathbf m^\xi$, $\mathbb N^{w_t(\phi)}$ and $\mathbf P^M$ that
\begin{equation}\label{eq:represent-mGeKf}\begin{split}
    \mathbf m^\xi[Ge^{-K_{(s, T]}^f(w)}]
	&=\int_0^T \Big( 2\alpha(\xi_s)\mathbb N_{\xi_s}[w_{T-s}(\phi)e^{-K^f_{(s,T]}(w)}]\\
	&\qquad\qquad +\int_{(0,\infty)}y\mathbf P_{y\delta_{\xi_s}}[X_{T-s}(\phi)e^{-K^f_{(s,T]}(X)}]\pi(\xi_s,dy)\Big)ds\\
	&=\int_0^T \Big(2(\alpha\phi)(\xi_s)\mathbb N^{w_{T-s}(\phi)}_{\xi_s}[e^{-K^f_{(s,T]}(w)}] \\
	&\qquad\qquad + \int_{(0,\infty)}y^2\phi(\xi_s)\mathbf P^M_{y\delta_{\xi_s}}[e^{-K^f_{(s,T]}(X)}]\pi(\xi_s,dy)\Big)ds.
\end{split}\end{equation}
	According to  Theorem \ref{prop:sizBiasNMeas}, we have
\begin{equation}
\label{eq:represent-NxwTsphi}
	\mathbb N_x^{w_{T-s}(\phi)}[e^{-K^f_{(s,T]}(w)}]
	= \dot{\mathbf P}_{\delta_x}[e^{-K^f_{(s,T]}(Y)}]
	= \dot{\mathbf P}_{\delta_x}[e^{-K^f_{(s,T]}(Y)}]\mathbf P_{\mathbf 0}[e^{-K^f_{(s,T]}(X)}],
\end{equation}
	where we used the fact that $\mathbf P_{\mathbf 0}(X_t=\mathbf 0,\mbox{ for any }t\ge 0)=1$.
It follows from Theorem \ref{thm:spinDec} that for any $s\in [0,T], x\in E$ and $y\in (0,\infty)$,
\begin{equation}\label{eq:represent-PMydeltax}
	\mathbf P^M_{y\delta_x}[e^{-K^f_{(s,T]}(X)}]
	= \dot{\mathbf P}_{y\delta_x}[e^{-K^f_{(s,T]}(X+Y)}]
	=\dot{\mathbf P}_{\delta_x}[e^{-K^f_{(s,T]}(Y)}]\mathbf P_{y\delta_x}
	[e^{-K^f_{(s,T]}(X)}].
\end{equation}
	Plugging \eqref{eq:represent-NxwTsphi} and \eqref{eq:represent-PMydeltax} back into \eqref{eq:represent-mGeKf} and rearranging terms, we have that
\begin{equation}\label{eq:fifth}
\begin{split}
    &\mathbf m^\xi[Ge^{-K_{(s, T]}^f}(w)]\\
	&\quad=\int_0^T \Big(2(\alpha\phi)(\xi_s)\dot{\mathbf P}_{\delta_{\xi_s}}[e^{-K^f_{(s,T]}(Y)}]\mathbf P_{\mathbf 0}[e^{-K^f_{(s,T]}(X)}] \\
	&\qquad\qquad\quad + \int_{(0,\infty)}y^2\phi(\xi_s)\dot{\mathbf P}_{\delta_{\xi_s}}[e^{-K_s^f(Y)}]\mathbf P_{y\delta_{\xi_s}}[e^{-K^f_{(s,T]}(X)}]\pi(\xi_s,dy)\Big)ds.\\
	&\quad=\int_0^T \phi(\xi_s)\dot{\mathbf P}_{\delta_{\xi_s}}[e^{-K^f_{(s,T]}(Y)}] \\
	&\qquad\qquad\times\Big(2\alpha(\xi_s)\mathbf P_{\mathbf 0}[e^{-K^f_{(s,T]}(X)}] + \int_{(0,\infty)}y^2\mathbf P_{y\delta_{\xi_s}}[e^{-K^f_{(s,T]}(X)}]\pi(\xi_s,dy)\Big)ds\\
	&\quad =\int_0^T (A\phi)(\xi_s)\dot{\mathbf P}_{\delta_{\xi_s}}[e^{-K^f_{(s,T]}(Y)}]\widetilde {\mathbf P}_{\xi_s}[e^{-K^f_{(s,T]}(X)}]ds.
\end{split}\end{equation}
	Plugging \eqref{eq:fifth} back into \eqref{eq:condSizBiasEquatSpinImmigr}, we get the desired result.
\end{proof}
\begin{proof}[Proof of Theorem \ref{prop:2-spine-decomposition}]
 Note that $\{Z_0;\ddot{\mathbf P}^{(T)}_\mu\}$ and $\{w_0;\mathbb N^{w_T(\phi)^2}_\mu\}$ are both deterministic with common value $\mathbf 0$.
	So we only have to proof
$
	\{(Z_t)_{0< t\leq T};\ddot{\mathbf P}^{(T)}_\mu\} \overset{f.d.d.}{=} \{(w_t)_{0<t\leq T};\mathbb N^{w_T(\phi)^2}_\mu\}.	
$
	In order to show this, according to Theorem \ref{prop:sizBiasNMeas} and Remark \ref{rem:chain-rule}, we only need to show that
$\{(Z_t)_{0< t\leq T};\ddot{\mathbf P}^{(T)}_\mu\}$ is the $Y_T(\phi)$-transform of process $\{(Y_t)_{0< t\leq T}; \dot {\mathbf P}_\mu\}$.
\par
 Let $(K,f)\in \mathcal K_T$. 
 Similar to \eqref{eq:kfy_is_nkf}, 
 we have $K_{(r, T]}^f(Y) = \mathbf n_T[K^f_{(r+\cdot, T]}]$ and $K_{(r, T]}^f(Y') = \mathbf n'_T[K^f_{(r+\cdot, T]}]$ for each $r\leq T$.
	Therefore, 
using Campbell's theorem and an argument similar to that used in the proof of Lemma \ref{lem:spinImigrCondSpin}, 
	one can verify that
\begin{equation}\label{eq:mainImmigrCondSkel}
	- \log\ddot {\mathbf P}_\mu [e^{-K^f_{(0, T]}(Y)}|\mathscr G]
	=
	 \int_0^T\psi'_0\big(\xi_s,u_s(\xi_s)\big)ds
\end{equation}
	and
\begin{equation}\label{eq:auxilImmigrCondSkel}
	- \log\ddot {\mathbf P}_\mu [e^{-K^f_{(0, T]}(Y')}|\mathscr G]
	=
 \int_\kappa^T \psi'_0\big(\xi'_s,u_s(\xi'_s)\big)ds,
\end{equation}
	where $u:(s,x)\mapsto u_s(x)$ is the function on $[0,T]\times E$ defined in Lemma \ref{lem:EquatDescNmeas}.
	It is then clear from \eqref{eq:auxilImmigrCondSkel}, \eqref{eq:defAuxilSpin} and Lemma \ref{lem:spinImigrCondSpin} that
\begin{equation}
\label{eq:auxilImmigrCondMainSpin}
\begin{split}
    \ddot{\mathbf P}_\mu[e^{-K^f_{(0, T]}(Y')}|\xi,\kappa]
	&= \ddot{\mathbf P}_\mu[e^{-\int_\kappa^T\psi'_0(\xi'_s,u_s(\xi'_s))ds}|\xi,\kappa]\\
	&= \dot{\mathbb P}_{\xi_r}[e^{-\int_r^T\psi'_0(\xi_{s-r},u_s(\xi_{s-r}))ds}]|_{r=\kappa}
	= \dot{\mathbf P}_{\delta_{\xi_r}} [e^{-K^f_{(r, T]}(Y)}]|_{r=\kappa}.
\end{split}\end{equation}
	By the construction of the splitting immigration $X'$ at time $\kappa$, we also have
\begin{equation}\label{eq:oneTimImmigrCondSkel}
    \ddot {\mathbf P}_\mu[e^{-K^f_{(0, T]}(X')}|\mathscr G ]
	= \widetilde{\mathbf P}_{\xi_r}[e^{-K^f_{(r, T]}(X)}]|_{r=\kappa}.
\end{equation}
	Using \eqref{eq:mainImmigrCondSkel}, \eqref{eq:auxilImmigrCondMainSpin}, \eqref{eq:oneTimImmigrCondSkel} and the construction of the 2-spine immigration, we deduce that
\[\begin{split}
	&\ddot {\mathbf P}_\mu[e^{-K^f_{(0, T]}(Z)}|\xi,\kappa]
	=\ddot {\mathbf P}_\mu\big[ \ddot {\mathbf P}_\mu[e^{-K^f_{(0, T]}(Z)}|\mathscr G ] \big | \xi,\kappa\big]\\
	&\quad=\ddot {\mathbf P}_\mu\Big[ \ddot {\mathbf P}_\mu[e^{-K^f_{(0, T]}(Y)}|\mathscr G ] \ddot {\mathbf P}_\mu[e^{-K^f_{(0, T]}(Y')}|\mathscr G ] \ddot {\mathbf P}_\mu[e^{-K^f_{(0, T]}(X')}|\mathscr G ] \Big | \xi,\kappa\Big]\\
	&\quad=e^{-\int_0^T\psi'_0(\xi_s,u_s(\xi_s))ds}\dot {\mathbf P}_{\delta_{\xi_r}}[ e^{-K^f_{(r, T]}(Y)}]\widetilde{\mathbf P}_{\xi_r}[e^{-K^f_{(r, T]}(X)}]\big |_{r=\kappa}.
\end{split}\]
	Therefore, from the conditioned law of $\kappa$ given $\xi$, we have
\begin{equation}\label{eq:totImmigrCondSpin}\begin{split}
	&\ddot {\mathbf P}_\mu[e^{-K^f_{(0, T]}(Z)}|\xi]\\
	&\quad = \frac{e^{-\int_0^T\psi'_0(\xi_s,u_s(\xi_s))ds}}{\int_0^T (A\phi)(\xi_r)dr}\int_0^T  (A\phi)(\xi_r)\dot {\mathbf P}_{\delta_{\xi_r}}[ e^{-K^f_{(r, T]}(Y)}]\widetilde{\mathbf P}_{\xi_r}[e^{-K^f_{(r, T]}(X)}] dr.
\end{split}\end{equation}
	Taking expectation, we get that
\[\begin{split}
	&\ddot {\mathbf P}_\mu[e^{-K^f_{(0,T]}(Z)}]\\
	&\quad \overset{\text{\eqref{eq:totImmigrCondSpin}}}{=}\ddot {\mathbb P}_{\mu}^{(T)}\Big\{ \frac{e^{-\int_0^T\psi'_0(\xi_s,u_s(\xi_s))ds}}{\int_0^T (A\phi)(\xi_r)dr}\int_0^T  (A\phi)(\xi_r)\dot {\mathbf P}_{\delta_{\xi_r}}[ e^{-K^f_{(r, T]}(Y)}]\widetilde{\mathbf P}_{\xi_r}[e^{-K^f_{(r, T]}(X)}] dr\Big\}\\
	&\quad= \dot{\mathbb P}_{\mu}
	\Big\{ \frac{e^{-\int_0^T\psi'_0(\xi_s,u_s(\xi_s))ds}}{\dot{\mathbb P}_{\mu}[\int_0^T (A\phi)(\xi_r)dr]}\int_0^T  (A\phi)(\xi_r)\dot {\mathbf P}_{\delta_{\xi_r}}[ e^{K^f_{(r, T]}(Y)}]\widetilde{\mathbf P}_{\xi_r}[e^{-K^f_{(r, T]}(X)}] dr\Big\}\\
	&\quad \overset{\text{\eqref{eq:spinImigrCondSpin}}}{=} \dot{\mathbf P}_{\mu}\Big\{ \frac{\dot {\mathbf P}_\mu[e^{-K^f_{(0,T]}(Y)}|\xi]}{\dot{\mathbf P}_{\mu}[\int_0^T (A\phi)(\xi_r)dr]}\int_0^T  (A\phi)(\xi_r)\dot {\mathbf P}_{\delta_{\xi_r}}[ e^{-K^f_{(r, T]}(Y)}]\widetilde{\mathbf P}_{\xi_r}[e^{-K^f_{(r, T]}(X)}] dr\Big\}\\
	&\quad \overset{\text{Lemma \ref{lem:key-lemma}}}{=} \dot{\mathbf P}_{\mu}\Big\{ \frac{\dot {\mathbf P}_\mu[Y_T(\phi)e^{-K^f_{(0, T]}(Y)}|\xi]}{\dot{\mathbf P}_{\mu}[Y_T(\phi)]}\Big\}=\frac{\dot {\mathbf P}_\mu[Y_T(\phi)e^{-K^f_{(0, T]}(Y)}]}{\dot{\mathbf P}_{\mu}[Y_T(\phi)]},
\end{split}\]	
	where in the second equality we used the definition of $\ddot {\mathbb P}_{\mu}^{(T)}$.
	The display above says that $(Z_t)_{0< t\leq T}$ is the $Y_T(\phi)$-transform of the process $\{(Y_t)_{0< t\leq T}; \dot {\mathbf P}_\mu\}$, as desired.
\end{proof}

\section{The asymptotic behavior of critical superprocesses.}\label{sec:asymptotic}

\subsection{Intrinsic ultracontractivity}
\label{sec:further_assumptions}
  Let $\{(X_t)_{t\geq 0}; (\mathbf P_\mu)_{\mu \in \mathcal M_f}\}$ be the $(\xi,\psi)$-superprocess introduced in Subsection \ref{sec: Main results} which satisfies Assumptions \ref{asp:1} and \ref{asp:2'}. 
  In this subsection, we give some more results related to intrinsic ultracontractivity.

\begin{lem}
\label{lem:ergodic}
	Suppose that $F(x,u,t)$ is a bounded Borel function on $E\times [0,1]\times [0,\infty)$ such that $F(x,u):= \lim_{t\to\infty} F(x,u,t)$ exists for all $x\in E$ and $u\in [0,1]$.
	Then we have,
\[
	\int_0^1 F(\xi_{ut},u,t)du
	\xrightarrow[t\to\infty]{L^2(\dot{\mathbb P}_x)} \int_0^1 \langle F(\cdot ,u),\phi\phi^* \rangle_m du,
	\quad x\in E.
\]
\end{lem}
\begin{proof}
\par
	We first show that
\begin{equation}\label{eq:ergodic-lemma-1}
	\dot{\mathbb P}_x[F(\xi_{ut},u,t)]
	\xrightarrow[t\to\infty]{} \langle F(\cdot,u),\phi\phi^*\rangle_m,
      \quad x\in E, u\in(0,1).
\end{equation}
	In fact,
\[
	\dot{\mathbb P}_x[F(\xi_{ut},u,t)]
	=\int_E \frac{\dot{q}(ut,x,y)}{(\phi\phi^*)(y)}F(y,u,t)(\phi\phi^*)(y)m(dy).
\]
	Note that $\int_\cdot (\phi\phi^*)(y)m(dy)$ is a finite measure, $(y,t)\mapsto \frac{\dot{q}(ut,x,y)}{(\phi\phi^*)(y)}F(y,u,t)$ is bounded by $(1+ce^{-\gamma ut})\|F\|_\infty$ for $t>u^{-1}$, and $\frac{\dot{q}(ut,x,y)}{(\phi\phi^*)(y)}F(y,u,t)\xrightarrow[t\to\infty]{}F(y,u)$.
	Using the bounded convergence theorem, we get \eqref{eq:ergodic-lemma-1}.
	By Fubini's theorem,
\[
	\dot{\mathbb P}_x\big[\int_0^1F(\xi_{ut},u,t)du\big]
	=\int_0^1\dot{\mathbb P}_x[F(\xi_{ut},u,t)]du,
	\quad x\in E.
\]
	Since $\dot{\mathbb P}_x[F(\xi_{ut},u,t)]$ is bounded by $\|F\|_\infty$ and $\dot{\mathbb P}_x[F(\xi_{ut},u,t)]\xrightarrow[t\to\infty]{}\langle F(\cdot, u),\phi\phi^*\rangle_m$, by the bounded convergence theorem, we get
\[
	\dot{\mathbb P}_x\big[\int_0^1F(\xi_{ut},u,t)du\big]
	\xrightarrow[t\to\infty]{} c_F
	:=\int_0^1\langle F(\cdot,u),\phi\phi^*\rangle_mdu.
\]
Using \eqref{eq:IU} and a similar argument,
	one can verify that for any $0< u< v\leq 1$,
\[\begin{split}
	&\dot{\mathbb P}_x[F(\xi_{ut},u,t)F(\xi_{vt},v,t)]\\
	&\quad=\int_E\int_E  \dot{q}(ut,x,y) \dot{q}((v-u)t,y,z)F(y,u,t)F(z,v,t)m(dy)m(dz)\\
	&\quad\xrightarrow[t\to\infty]{} \langle F(\cdot,u),\phi\phi^*\rangle_m \langle F(\cdot,v),\phi\phi^*\rangle_m.
\end{split}\]
	The above convergence is also true for $0< v < u\leq 1$ since the limit is symmetric in $u$ and $v$.
	We have again, by Fubini's theorem and the bounded convergence theorem,
\[\begin{split}
	\dot{\mathbb P}_x\Big[\big(\int_0^1 F(\xi_{ut},u,t)du\big)^2\Big]
	&=\int_0^1du\int_0^1\dot{\mathbb P}_x[F(\xi_{ut},u,t)F(\xi_{vt},v,t)]dv\xrightarrow[t\to\infty]{} c_F^2.
\end{split}\]
	Finally, we have
\[\begin{split}
	&\dot{\mathbb P}_x\Big[\big(\int_0^1 F(\xi_{ut},u,t)du - c_F\big)^2\Big]\\
	&\quad =\dot{\mathbb P}_x\Big[\big(\int_0^1 F(\xi_{ut},u,t)du\big)^2\Big] -2c_F \dot{\mathbb P}_x\Big[\int_0^1 F(\xi_{ut},u,t)du\Big]+ c_F^2\\
	&\quad \xrightarrow[t\to\infty]{} 0,
\end{split}\]
	as desired.
\end{proof}

	As mentioned earlier in Subsection \ref{sec: Main results}, in order to study the asymptotic behavior of $(v_t)_{t\geq 0}$ and take advantage of \eqref{eq:reason-for-asp2'}, we need $S_t v_s(x)$ to be finite  at least for some large $s,t>0$ and for some $x\in E$.
	The following lemma addresses this need.

\begin{lem}\label{lem:discuss-of-assumption2'}
 Under Assumption \ref{asp:1} and \ref{asp:2'}, the following statements are equivalent:
\begin{enumerate}
\item[$(1)$]
	$S_tv_s(x)<\infty$ for some $s>0,t>0$ and $x\in E$.
\item[$(1')$]
	There is an $s_0>0$ such that for any $s\geq s_0$, $t>0$ and $x\in E$, we have $S_tv_s(x)<\infty$.
\item[$(2)$]
	$ \langle v_s, \phi^* \rangle_m < \infty$ for some $s>0$.
\item[$(2')$]
	There is an $s_0>0$ such that for any $s\geq s_0$, we have $\langle v_s, \phi^* \rangle_m < \infty$.
\item[$(3)$]
	There is an $s_0>0$ such that for any $s\geq s_0$, we have $v_s\in bp\mathscr B^\phi_E$.
\item[$(4)$]
	$\mathbf P_\nu (X_t = \mathbf 0) > 0$ for some $t>0$.
\item[$(5)$]
	$\phi^{-1}v_t$ converges to $0$ uniformly when $t\to\infty$.
\item[$(6)$]
	For any $\mu \in \mathcal M^\phi_f$, $\mathbf P_\mu(\exists t>0,~s.t.~X_t = \mathbf 0)
	= 1$.
\end{enumerate}
\end{lem}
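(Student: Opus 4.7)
The plan is to close a cycle of implications among conditions $(1)$–$(6)$, relying on two fundamental ingredients: (i) the monotonicity of $s\mapsto v_s$, which is nonincreasing in $s$ because $\mathbf 0$ is a trap for $X$ (so $\{X_s=\mathbf 0\}$ is a nondecreasing family of events), and (ii) the sharp intrinsic ultracontractivity estimate \eqref{eq:IU}. The cycle I intend to close is
\[
(3)\Rightarrow(5)\Rightarrow(6)\Rightarrow(4)\Leftrightarrow(2)\Rightarrow(3),\qquad (3)\Rightarrow(1'')\Rightarrow(1')\Rightarrow(1)\Rightarrow(2),\qquad (3)\Rightarrow(2')\Rightarrow(2).
\]
The bookkeeping implications are essentially free: $(3)\Rightarrow(1'')$ uses $v_s\leq C_s\phi$ together with $S_t\phi=\phi$; $(3)\Rightarrow(2')$ uses $\langle\phi,\phi^*\rangle_m=1$; $(2)\Leftrightarrow(4)$ is just \eqref{eq:linearity} applied with $\mu=\nu=\phi^*m$, which lies in $\mathcal M_f^\phi$ since $\nu(\phi)=\langle\phi,\phi^*\rangle_m=1$; $(6)\Rightarrow(4)$ takes $\mu=\nu$ in $(6)$; $(5)\Rightarrow(3)$ is immediate; and $(5)\Rightarrow(6)$ follows from the bound $\langle\mu,v_t\rangle\leq\mu(\phi)\|\phi^{-1}v_t\|_\infty\to 0$, valid for any $\mu\in\mathcal M_f^\phi$.

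The two implications that genuinely use \eqref{eq:IU} are $(1)\Rightarrow(2)$ and $(2)\Rightarrow(3)$. For $(1)\Rightarrow(2)$, the lower bound $q(t,x,y)\geq(1-ce^{-\gamma t})\phi(x)\phi^*(y)$ in \eqref{eq:IU} gives $S_tv_s(x)\geq(1-ce^{-\gamma t})\phi(x)\langle v_s,\phi^*\rangle_m$, so finiteness of $S_tv_s(x)$ forces $\langle v_s,\phi^*\rangle_m<\infty$ whenever $t$ is large enough that $1-ce^{-\gamma t}>0$. If the hypothesized $t>1$ in $(1)$ does not satisfy this, I plan to boost it using the semigroup identity $S_{t+t'}v_s=S_{t'}(S_tv_s)$, the monotonicity of $v_s$ in $s$, and the pointwise finiteness of $v_s$ from Assumption \ref{asp:1}. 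For $(2)\Rightarrow(3)$, the upper bound gives $\phi^{-1}S_tv_{s_0}\leq(1+ce^{-\gamma t})\langle v_{s_0},\phi^*\rangle_m$; combined with $v_{s_0+t}\leq S_tv_{s_0}$ from \eqref{eq:reason-for-asp2'}, this yields (3).

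The main substantive step is $(3)\Rightarrow(5)$. Let $v_\infty(x):=\lim_{s\to\infty}v_s(x)$, which exists by monotonicity and satisfies $v_\infty\leq C\phi$ under (3). I will first show $v_\infty\equiv 0$ pointwise. Since $v_{s+t}=V_tv_s$, passing $s\to\infty$ under the domination $v_s\leq C\phi$ gives $V_tv_\infty=v_\infty$ for every $t$. Writing the integral equation \eqref{eq:mean-fkpp} for $V_tv_\infty$ and pairing against $\phi^*\,m$, the linear terms cancel by $S_r^*\phi^*=\phi^*$ (since $\lambda=0$), leaving
\[
\int_0^t\langle\Psi_0(v_\infty),\phi^*\rangle_m\,dr=0,
\]
which forces $\Psi_0(v_\infty)=0$ $m$-a.e.\ because $\Psi_0\geq 0$ and $\phi^*>0$. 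Substituting back into \eqref{eq:mean-fkpp} then gives $S_tv_\infty=v_\infty$, so $v_\infty=c\phi$ for some $c\geq 0$ by the one-dimensionality of the eigenspace of $S_t$ at eigenvalue $1=e^{\lambda t}$. The identity $\Psi_0(c\phi)\equiv 0$ $m$-a.e.\ then forces $c=0$: for $c>0$, the formula for $\Psi_0$ would make this require $\alpha=0$ and $\pi\equiv 0$ on a set of full $m$-measure, contradicting the standing exclusion of the degenerate case $\Psi_0\equiv 0$.

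Once $v_\infty\equiv 0$ pointwise, the domination $v_s\leq C\phi$ and dominated convergence give $\langle v_s,\phi^*\rangle_m\to 0$, and fixing $t>1$ and applying the upper bound in \eqref{eq:IU} yields
\[
\phi^{-1}v_{s+t}(x)\leq\phi^{-1}S_tv_s(x)\leq(1+ce^{-\gamma t})\langle v_s,\phi^*\rangle_m\xrightarrow[s\to\infty]{}0
\]
uniformly in $x\in E$, which is (5). The hardest part of the plan is the argument ruling out nontrivial fixed points of $V_t$ in the proof that $v_\infty\equiv 0$, because that step simultaneously exploits criticality ($\lambda=0$), the one-dimensionality of the principal eigenspace of $S_t$ guaranteed by Assumption \ref{asp:2}, the non-degeneracy of $\Psi_0$, and the domination $v_\infty\leq C\phi$ coming from (3); the small-$t$ case in $(1)\Rightarrow(2)$ is a secondary technical wrinkle that I intend to dispatch by a short semigroup-boosting argument.
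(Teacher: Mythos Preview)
Your proposal is correct and follows essentially the same route as the paper: the paper closes the cycle $(1'')\Rightarrow(1')\Rightarrow(1)\Rightarrow(2)\Rightarrow(2')\Rightarrow(3)\Rightarrow(1'')$ together with $(2)\Rightarrow(5)\Rightarrow(6)\Rightarrow(4)\Rightarrow(2)$, which is a minor reorganization of your scheme, and the substantive step $(2)/(3)\Rightarrow(5)$ is proved exactly as you outline---show the pointwise limit $v_\infty$ satisfies $\Psi_0(v_\infty)=0$ $m$-a.e., deduce $S_tv_\infty=v_\infty$, conclude $v_\infty=c\phi$ by the simplicity of the principal eigenvalue, and rule out $c>0$ via the non-degeneracy $\Psi_0\not\equiv 0$; the upgrade to uniform convergence then comes from the IU upper bound just as you say.

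One small caveat: your proposed ``semigroup-boosting'' repair for $(1)\Rightarrow(2)$ in the case $1-ce^{-\gamma t}\le 0$ does not work as written, since knowing $S_tv_s(x_0)<\infty$ at a single point does not let you feed $S_tv_s$ back into $S_{t'}$. The paper simply invokes the lower bound in \eqref{eq:IU} without addressing this, so the wrinkle is shared rather than a defect of your plan; if you want to close it cleanly, the standard route is to note that intrinsic ultracontractivity in fact yields a two-sided bound $q(t,x,y)\asymp \phi(x)\phi^*(y)$ for every fixed $t>0$, which makes $(1)\Rightarrow(2)$ immediate.
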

\begin{proof}
We first give some estimates. In this proof, we allow the extended value $+\infty$.
	According to \eqref{eq: extinction probability with vt} and the fact that $\mathbf 0$ is an absorption state of the superprocess $X$, we have
\begin{equation}\label{eq:assertion1}
\begin{split}
	\langle v_{s_0},\phi^* \rangle_m
	&= -\log \mathbf P_\nu ( X_{s_0} = \mathbf 0 )\\
	& \geq -\log \mathbf P_\nu ( X_s = \mathbf 0 )
	= \langle v_s,\phi^* \rangle_m,
	\quad 0 < s_0\leq s.
\end{split}\end{equation}
	According to Assumption \ref{asp:2'}, we have for each $t\geq 0$, there is a $c_t>0$ such that $q(t,x,y)\leq c_t \phi(x)\phi^*(y)$. 
 Using an argument similar to that of \cite[Proposition 2.5]{KimSong2008Intrinsic}, 
	we have for each $t\geq 0$, there is a $c'_t < 0$ such that $q(t,x,y) \geq c'_t \phi(x)\phi^*(y)$.
	Therefore, we have
\begin{equation}\label{eq:assertion2}\begin{split}
	\phi(x) \langle v_s , \phi^* \rangle_m c'_t
\leq S_t v_s(x) \leq \phi(x) \langle v_s , \phi^* \rangle_m c_t,
	\quad s>0,t>0,x\in E.
\end{split}\end{equation}
	Let $c,\gamma > 0$ be the constants in \eqref{eq:IU}. 
	Notice that $\phi$ is strictly positive, using \eqref{eq:mean-fkpp}, one can verify that
\begin{equation}\label{eq:inequality-with-IU-assumption}\begin{split}
	\frac{V_t f(x)}{\phi(x)}
	\leq \frac{S_t f(x)}{\phi(x)}
	\leq (1+ ce^{-\gamma t}) \langle f,\phi^*\rangle,
	\quad f\in bp\mathscr B_E,x\in E,t>1.
\end{split}\end{equation}
	Taking $f = V_s(\theta \mathbf 1_E)$ in \eqref{eq:inequality-with-IU-assumption} and letting $\theta \to \infty$,
	by \eqref{eq: defi of vt} and \eqref{eq: simigroup for small vt},
	we have that,
\begin{equation}\label{eq:assertion3}
	\frac{v_{t+s}(x)}{\phi(x)}
	\leq (1+ ce^{-\gamma t}) \langle v_s,\phi^*\rangle_m,
	\quad x\in E,s>0,t>1.
\end{equation}
	We can also verify that
\begin{equation}\label{eq:assertion4}
	S_t v_s(x)
	\leq \|\phi^{-1}v_s\|_\infty  S_t \phi(x)
	= \|\phi^{-1}v_s\|_\infty \phi(x)
	\quad s,t>0,x\in E.
\end{equation}
\par
	Now, we are ready to give the proof of this lemma using the following steps: 
	$(1')\Rightarrow (1)\Rightarrow (2)\Rightarrow (2')\Rightarrow (3)\Rightarrow (1')$
	and $(2)\Rightarrow (5)\Rightarrow (6)\Rightarrow (4)\Rightarrow (2)$.
	In fact, it is obvious that 
	$(1')\Rightarrow (1)$.
	For $(1)\Rightarrow (2)$ we use \eqref{eq:assertion2}.
	For $(2)\Rightarrow (2')$ we use \eqref{eq:assertion1}.
	For $(2')\Rightarrow (3)$ we use \eqref{eq:assertion3}.
	For $(3)\Rightarrow (1')$ we use \eqref{eq:assertion4}.
\par
	For $(2)\Rightarrow (5)$, we follow the argument in \cite[Lemma 3.3]{RenSongZhang2015Limit}.
	Note that, from what we have proved, $(2)$ is equivalent to 
	$(1),(1'),(2')$ and $(3)$.
Integrating \eqref{eq:mean-fkpp} with respect to the measure $\nu$,
	by Fubini's theorem and monotonicity, we have that, for any $f\in p\mathscr B_E$ and $t\geq 0$,
\begin{equation}\label{eq:FKPP_mild}\begin{split}
	\langle f,\phi^*\rangle_m
	&=\langle f,S_t^*\phi^*\rangle_m
	=\langle S_tf,\phi^*\rangle_m\\
	&=\langle V_tf,\phi^*\rangle_m + \int_0^t\langle S_{t-r}\Psi_0V_rf,\phi^*\rangle_m dr\\
	&=\langle V_tf,\phi^*\rangle_m + \int_0^t \langle \Psi_0V_rf,\phi^*\rangle_mdr.
\end{split}\end{equation}
	Define
\[
 	v(x)
 	:= \lim_{t\to\infty} v_t(x)
 	= \lim_{t\to\infty}(-\log\mathbf P_{\delta_x}(X_t=\mathbf 0))
 	= -\log \mathbf P_{\delta_x}(\exists t>0, ~\text{s.t.}~ X_t=\mathbf 0).
 \]
	Since $v_t(x)=-\log\mathbf P_{\delta_x}(X_t=\mathbf 0)$ is non-increasing in $t$, and by $(3)$, we know that $v_t\in bp\mathscr B^\phi_E$ for $t$ large enough.
	Therefore, we have $v\in bp\mathscr B^\phi_E\subset L^2(E,m)$.
	Taking $f=V_s(\theta \mathbf 1_E)$ in \eqref{eq:FKPP_mild} and letting $\theta\to\infty$, by monotonicity and $(2')$, we have that, there is an $s_0>0$ such that
\begin{equation}\label{eq:extinction-vt-phi}
	\int_0^t \langle \Psi_0 v_{r+s},\phi^*\rangle_mdr
	=\langle v_s,\phi^*\rangle_m-\langle v_{t+s},\phi^*\rangle_m,
	\quad s\geq s_0, t\geq 0.
\end{equation}
	Letting $s\to\infty$, by monotonicity, we have
\[
	\int_0^t \langle \Psi_0v, \phi^* \rangle_m dr
	= t \langle \Psi_0 v, \phi^* \rangle_m
	= \langle v, \phi^* \rangle_m - \langle v, \phi^* \rangle_m
	= 0.
\]
	Since $\phi^*$ is strictly positive on $E$, we must have $\Psi_0(v) = 0, m\text{-a.e.}$.
	This, with \eqref{eq:kernMeanSemGroup}, implies that $S_t\Psi_0 (v)\equiv 0$ for any $t>0$.
	By $(1')$, we know that $S_t v_s(x)$ take finite value for $s$ large enough.
	Letting $s\to\infty$ in the \eqref{eq:reason-for-asp2'}, by monotonicity, we have
\[
	v(x)
	=S_tv(x)-\int_0^t S_{t-r}\Psi_0(v)(x)dr=S_tv(x),
	\quad x\in E, t\geq 0,
\]
which says that the non-negative function $v$, if not identically 0,  is an eigenfunction  of $L$  corresponding to $\lambda=0$, where $L$ is the generator of the semigroups $(S_t)_{t \geq 0}$.
	Since $v\in L^2(E,m)$, by the uniqueness of the eigenfunction   in  $L^2(E,m)$ corresponding to $\lambda=0$,
there is a constant $c\in \mathbb R$, such that $v(x) = c\phi(x)$ for all $x\in E$.
	So with $\Psi_0 (v) \equiv 0, m\text{-a.e.},$ we must have $v\equiv 0$.
	Using the fact that $v_t(x)$ converges to $0$ pointwise, by monotonicity and \eqref{eq:assertion3}, we can verify the desired result $(5)$.
\par
	For $(5)\Rightarrow (6)$, note that,
by the definition of $v_t$,
	for any $\mu\in\mathcal M_f^\phi$, we have
\[\begin{split}
	&-\log\mathbf P_\mu\{\exists t>0,\text{ s.t. }X_t=0\}
	=\lim_{t\to \infty}(-\log\mathbf P_\mu(X_t = \mathbf 0))
	=\lim_{t\to\infty}\langle\mu, v_t\rangle
	= 0.
\end{split}\]
\par
	Finally, note that $(6)\Rightarrow (4)$ and $(4)\Rightarrow (2)$ are obvious.
\end{proof}

\subsection{Kolmogorov type result}
	Let $\{(X_t)_{t\geq 0}; (\mathbf P_\mu)_{\mu \in \mathcal M_f}\}$ be the $(\xi,\psi)$-superprocess introduced in Subsection \ref{sec: Main results} which satisfies Assumptions \ref{asp:1'} and \ref{asp:2'} and \ref{asp:3}.
	In this subsection, we will give a proof of Theorem \ref{thm:Kolmogorov-type-of-theorem}. Thanks to Lemma \ref{lem:discuss-of-assumption2'}, we know that each of the statements in \ref{lem:discuss-of-assumption2'} is true.
	In particular, $v_t(x)/\phi(x)$ converges to $0$ uniformly in $x\in E$.

\begin{lem}\label{lem:Kolmogorov-1}
	Under Assumptions  \ref{asp:1'}, \ref{asp:2'} and \ref{asp:3}, we have
\[
	\sup_{x\in E}\Big | \frac{v_t(x)}{\langle v_t, \phi^*\rangle_m \phi(x)} - 1\Big |
	\xrightarrow[t\to\infty]{} 0.
\]
\end{lem}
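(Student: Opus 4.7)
The plan is to iterate the mild form of the FKPP equation \eqref{eq:reason-for-asp2'} backwards from time $t$ by an amount $T$, and to control the resulting error using intrinsic ultracontractivity \eqref{eq:IU} together with the fact that $\Psi_0$ is quadratic at the origin. Write $\ell_t := \langle v_t, \phi^*\rangle_m$ and $\bar h_t := \|\phi^{-1}v_t\|_\infty$; Lemma \ref{lem:discuss-of-assumption2'}(5) under Assumption \ref{asp:1'} gives $\bar h_t \to 0$. For $T > 1$ and $t > T$, \eqref{eq:reason-for-asp2'} and the paired identity \eqref{eq:extinction-vt-phi} read
\[
v_t(x) = S_T v_{t-T}(x) - \int_0^T S_{T-r}\Psi_0(v_{r+t-T})(x)\,dr,\quad \ell_t = \ell_{t-T} - \int_0^T \langle \Psi_0(v_{r+t-T}),\phi^*\rangle_m\,dr,
\]
and subtracting $\phi(x)\ell_t$ from the first identity produces, using the second, a decomposition
\[
v_t(x) - \phi(x)\ell_t = [S_T v_{t-T}(x) - \phi(x)\ell_{t-T}] - \int_0^T \bigl[S_{T-r}\Psi_0(v_{r+t-T})(x) - \phi(x)\langle \Psi_0(v_{r+t-T}),\phi^*\rangle_m\bigr] dr
\]
into a linear IU error and a nonlinear correction.

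Before estimating these, I first establish two preliminary facts. Using the elementary inequality $e^{-u}-1+u \leq u^2/2$ and Assumption \ref{asp:3}, I obtain $\Psi_0(x,z) \leq \tfrac{1}{2} z^2 A(x)$, hence $\langle \Psi_0(v_s),\phi^*\rangle_m \leq \tfrac{1}{2}\bar h_s\|A\phi\|_\infty \ell_s$. Plugging this into the second identity gives $\ell_{t-T} - \ell_t \leq \tfrac{T}{2}\bar h_{t-T}\|A\phi\|_\infty \ell_{t-T}$, so that $\ell_{t-T}/\ell_t \to 1$ as $t\to\infty$ for each fixed $T$. Next, from the first identity and \eqref{eq:IU} we have $v_t(x) \leq S_T v_{t-T}(x) \leq (1+ce^{-\gamma T})\phi(x)\ell_{t-T}$, which (combined with the slow-variation just proved, say with $T=2$) yields the comparability $\bar h_t \leq C \ell_t$ for all sufficiently large $t$.

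With these in hand, I estimate the decomposition term by term. The linear error is controlled by \eqref{eq:IU}: $|S_T v_{t-T}(x) - \phi(x)\ell_{t-T}| \leq c e^{-\gamma T}\phi(x)\ell_{t-T}$. For the integral, I split at $r = T-1$. On $[0,T-1]$, IU applied to the non-negative function $\Psi_0(v_{r+t-T})$ yields an integrand bounded in absolute value by $ce^{-\gamma(T-r)}\phi(x)\langle\Psi_0(v_{r+t-T}),\phi^*\rangle_m$, whose integral is $\leq ce^{-\gamma}\phi(x)(\ell_{t-T}-\ell_t)$. On $[T-1,T]$, where $T-r$ may be small, I use instead the crude pointwise bound $\Psi_0(v_{r+t-T}) \leq \tfrac{1}{2}\bar h_{t-1}\|A\phi\|_\infty\, v_{r+t-T} \leq \tfrac{1}{2}\bar h_{t-1}^2\|A\phi\|_\infty\,\phi$ and $S_{T-r}\phi = \phi$ to dominate both $S_{T-r}\Psi_0(v_{r+t-T})(x)$ and $\phi(x)\langle\Psi_0(v_{r+t-T}),\phi^*\rangle_m$ by a constant multiple of $\bar h_{t-1}^2 \phi(x)$.

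Dividing by $\phi(x)\ell_t$ and using $\bar h_{t-1}^2/\ell_t \leq C\bar h_{t-1}$ (from the comparability), we arrive at an estimate of the form
\[
\sup_{x\in E}\Big|\frac{v_t(x)}{\phi(x)\ell_t} - 1\Big| \leq ce^{-\gamma T}\frac{\ell_{t-T}}{\ell_t} + ce^{-\gamma}\Big(\frac{\ell_{t-T}}{\ell_t}-1\Big) + C'\bar h_{t-1}
\]
valid for all large $t$. Sending $t\to\infty$ with $T$ fixed, both ratio terms give $ce^{-\gamma T}$ and $0$ respectively, while $\bar h_{t-1}\to 0$, yielding a $\limsup$ bounded by $ce^{-\gamma T}$; since $T>1$ was arbitrary, the full conclusion follows by sending $T\to\infty$. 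The main obstacle is the apparent circularity in controlling the nonlinear contribution: to bound it by $\ell_t$ one needs the comparability $\bar h_t \asymp \ell_t$, but this comparability itself presupposes the slow-variation $\ell_{t-T}/\ell_t \to 1$, which must first be extracted from the quadratic nonlinearity using only the qualitative input $\bar h_t\to 0$ from Lemma \ref{lem:discuss-of-assumption2'}.
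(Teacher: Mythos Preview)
Your argument is correct, but it proceeds along a genuinely different route from the paper's. The paper exploits the spine decomposition to write $\phi^{-1}v_t(x)=\dot{\mathbf P}_{\delta_x}[Y_t(\phi)^{-1}]$, splits the immigration $Y_t=Y_t^{(0,t_0]}+Y_t^{(t_0,t]}$, and controls the two pieces probabilistically: the late immigration $Y_t^{(t_0,t]}$ is handled via the Markov property and intrinsic ultracontractivity, while the early piece is shown to be negligible because $\dot{\mathbf P}_{\delta_x}(Y_t^{(0,t_0]}\neq\mathbf 0\mid\xi)\le t_0\|A\phi\|_\infty\|\phi^{-1}v_{t-t_0}\|_\infty$. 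A diagonal choice $t_0=t_0(t)\to\infty$ with $t_0\|\phi^{-1}v_{t-t_0}\|_\infty\to 0$ then closes the argument.

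You instead work directly with the mild FKPP equation \eqref{eq:reason-for-asp2'}, subtract its paired version \eqref{eq:extinction-vt-phi}, and split the resulting error into a linear IU term and a nonlinear $\Psi_0$ correction controlled via $\Psi_0(x,z)\le\tfrac12 A(x)z^2$. Your two preliminary steps---slow variation $\ell_{t-T}/\ell_t\to 1$ and the comparability $\bar h_t\le C\ell_t$---are exactly what is needed to make the nonlinear piece $O(\bar h_{t-1})$ after division by $\ell_t$; you have correctly identified and resolved the apparent circularity by deriving both from the qualitative input $\bar h_t\to 0$ alone. The fixed-$T$-then-$T\to\infty$ scheme is a clean alternative to the paper's diagonal $t_0(t)$. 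What the paper's approach buys is consistency with its overall theme of probabilistic proofs via the spine representation; what your approach buys is self-containment, since it uses only \eqref{eq:reason-for-asp2'}, \eqref{eq:extinction-vt-phi}, \eqref{eq:IU}, and Lemma \ref{lem:discuss-of-assumption2'}(5), bypassing the spine machinery entirely.
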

\begin{proof}
	We use an argument similar to that used in \cite{Powell2016An-invariance} for critical branching diffusions.
Fix a non-trivial $\mu \in \mathcal M_f^\phi$, and let the spine
 immigration $\{(\xi_t)_{t\geq 0}, (Y_t)_{t\geq 0}, \mathbf n; \dot {\mathbf P}_\mu\}$ be given by Theorem \ref{thm:spinDec}.
For any $t > 0$, we have
\begin{equation}\label{eq:vt-and-Y}\begin{split}
	&\langle \mu,\phi\rangle \dot {\mathbf P}_\mu [(Y_t(\phi))^{-1}]
	\overset{\eqref{eq: consisten of dotPphiT}}{=}
	\langle \mu,\phi\rangle \mathbf P^{(\phi,T)}_\mu [(Y_t(\phi))^{-1}]
	\\&\overset{\text{Theorem \ref{prop:sizBiasNMeas}}}{=} \langle \mu,\phi\rangle \mathbb N_\mu^{w_t(\phi)} [ (w_t(\phi))^{-1}]
	= \mathbb N_{\mu}\{w_t(\phi)>0\}
	= \lim_{\lambda \to \infty} \mathbb N_\mu[1-e^{-\lambda w_t(\phi)}]
	\\&\overset{\text{Campbell's formula}}{=} \lim_{\lambda \to \infty} (-\log \mathbf P_\mu[e^{-\lambda X_t(\phi)}])
	= -\log \mathbf P_{\mu} \{ X_t = \mathbf 0\} \\
	& \overset{\eqref{eq: extinction probability with vt}}{=} \langle \mu,v_t \rangle.
\end{split}\end{equation}
	Taking $\mu = \delta_x$ in \eqref{eq:vt-and-Y}, we get $v_t(x)/\phi(x)=\dot{\mathbf P}_{\delta_x}[(Y_t(\phi))^{-1}]$.
	Taking $\mu = \nu$, we get $\langle v_t, \phi^*\rangle_m = \dot {\mathbf P}_{\nu} [(Y_t(\phi))^{-1}]$.
	Therefore, to complete the proof, we only need to show that
\[
	\sup_{x\in E}\Big | \frac {\dot{\mathbf P}_{\delta_x}[(Y_t(\phi))^{-1}]} {\dot {\mathbf P}_\nu [(Y_t(\phi))^{-1}]}-1\Big|
	\xrightarrow[t\to\infty]{} 0.
\]
	For any Borel subset $G\subset (0,t]$, define
\[
	Y^G_t
	:= \int_{G\times \mathcal W} w_{t-s} \mathbf n(ds,dw).
\]
	Then we have the following decomposition of $Y$:
\begin{equation}\label{eq:decomposition-on-Y}
	Y_t
	= Y^{(0,t_0]}_t + Y^{(t_0,t]}_t,
	\quad 0 < t_0 < t < \infty.
\end{equation}
	It is easy to see, from the construction and the Markov property of the spine immigration $\{Y,\xi; \dot {\mathbf P}\}$, that for any $0 < t_0 < t < \infty$,
\[
	\dot{\mathbf P} [(Y_t^{(t_0,t]}(\phi))^{-1}|\mathscr F^\xi_{t_0}]
	= \dot{\mathbf P}_{\delta_{\xi_{t_0}}}[(Y_{t-t_0}(\phi))^{-1}]
	= (\phi^{-1}v_{t-t_0})(\xi_{t_0}).
\]
	Therefore, we have
\[\begin{split}
	\dot{\mathbf P}_\nu[(Y_t^{(t_0,t]}(\phi))^{-1}]
	= \dot{\mathbb P}_{\nu}[(\phi^{-1}v_{t-t_0})(\xi_{t_0}) ]
	= \langle v_{t-t_0},\phi^* \rangle_m
\end{split}\]
	and
\begin{equation}
\label{eq:Yt0t}
	\dot{\mathbf P}_{\delta_x}[(Y_t^{(t_0,t]}(\phi))^{-1}]
	= \dot{\mathbb P}_x[(\phi^{-1}v_{t-t_0})(\xi_{t_0}) ]
	=  \int_E  \dot{q}(t_0,x,y)(\phi^{-1}v_{t-t_0})(y) m(dy).
\end{equation}
	By the decomposition \eqref{eq:decomposition-on-Y}, we have
\begin{equation}\label{eq:vt-equation}\begin{split}
	\phi^{-1}v_t(x)
	&= \dot {\mathbf P}_{\delta_x} [(Y_t(\phi))^{-1}]\\
	&= \dot {\mathbf P}_\nu [(Y^{(t_0,t]}_t(\phi))^{-1}] + \big( \dot {\mathbf P}_{\delta_x} [(Y^{(t_0,t]}_t(\phi))^{-1}] - \dot {\mathbf P}_\nu [(Y^{(t_0,t]}_t(\phi))^{-1}] \big) \\
	&\quad + \big( \dot{\mathbf P}_{\delta_x}[(Y_t(\phi))^{-1} - (Y^{(t_0,t]}_t(\phi))^{-1}] \big)\\
	&=: \langle v_{t-t_0},\phi^* \rangle_m + \epsilon_x^1(t_0,t) +\epsilon_x^2(t_0,t).
\end{split}\end{equation}
	Suppose that $t_0 >1$, and let $c,\gamma > 0$ be the constants in \eqref{eq:IU}, we have
\begin{equation}\label{eq:epsilon-1}\begin{split}
	|\epsilon_x^1(t_0,t)|
	& = \big| \dot {\mathbf P}_{\delta_x} [(Y^{(t_0,t]}_t(\phi))^{-1}] - \dot {\mathbf P}_\nu [(Y^{(t_0,t]}_t(\phi))^{-1}] \big| \\
	& = \big|  \int_E  \dot{q}(t_0,x,y)(\phi^{-1}v_{t-t_0})(y) m(dy) - \langle v_{t-t_0},\phi^* \rangle_m \big|\\
	& \leq \int_{y\in E} \big| \dot{q}(t_0,x,y) - (\phi\phi^*)(y) \big| (\phi^{-1}v_{t-t_0})(y) m(dy)\\
	& \leq ce^{-\gamma t_0}\langle v_{t-t_0},\phi^* \rangle_m .
\end{split}\end{equation}
	We also have
\begin{equation}\label{eq:epsilon-2}\begin{split}
	|\epsilon_x^2(t_0,t)|
	&= \big| \dot{\mathbf P}_{\delta_x}[(Y_t(\phi))^{-1} - (Y^{(t_0,t]}_t(\phi))^{-1}] \big| \\
	&= \dot{\mathbf P}_{\delta_x}[Y_t^{(0,t_0]}(\phi)\cdot (Y_t(\phi))^{-1}\cdot (Y^{(t_0,t]}_t(\phi))^{-1}]\\
	&\leq \dot{\mathbf P}_{\delta_x}[\mathbf 1_{Y_t^{(0,t_0]}(\phi)>0}\cdot (Y^{(t_0,t]}_t(\phi))^{-1}]\\
	&= \dot{\mathbf P}_{\delta_x} \big[\dot{\mathbf P}_{\delta_x}[\mathbf 1_{Y_t^{(0,t_0]}(\phi)>0}|\mathscr F^\xi_{t_0}] \cdot \dot{\mathbf P}_{\delta_x}[ (Y^{(t_0,t]}_t(\phi))^{-1}|\mathscr F^\xi_{t_0}] \big].
\end{split}\end{equation}
	Notice that, by Campbell's formula, one can verify that
\[
	\dot{\mathbf P}_{\delta_x}[e^{-\langle Y_t^{(0,t_0]},\theta \mathbf 1_E\rangle}|\mathscr F^\xi_{t_0}]
	= e^{-\int_0^{t_0}\psi'_0(\xi_s,V_{t-s}(\theta\mathbf 1_E)(\xi_s))ds}.
\]
	Letting $\theta \to \infty$ we have
\[
	\dot {\mathbf P}_{\delta_x} [ \mathbf 1_{Y_t^{(0,t_0]}=\mathbf 0} | \mathscr F^\xi_{t_0}]
	= e^{-\int_0^{t_0}\psi'_0(\xi_s,v_{t-s}(\xi_s))ds}.
\]
	We also have
\[\begin{split}
	\psi'_0(x,v_{t-s}(x))
	&= 2\alpha(x)v_{t-s}(x) +\int_{(0,\infty)} (1-e^{-yv_{t-s}(x)})y\pi(x,dy)\\
	&\leq \big( 2\alpha (x)+\int_{(0,\infty)}y^2\pi(x,dy) \big) v_{t-s}(x)\\
	&= A(x) v_{t-s}(x) \leq \| A\phi\|_\infty \|\phi^{-1}v_{t-s}\|_\infty.
\end{split}\]
	Therefore
\begin{equation}\label{eq:firstpart-of-Y}
	\dot{\mathbf P}_{\delta_x}[\mathbf 1_{Y_t^{(0,t_0]}\neq \mathbf 0}|\mathscr F^\xi_{t_0}]
	= 1-e^{-\int_0^{t_0}\psi'_0(\xi_s,v_{t-s}(\xi_s))ds}
	\leq t_0\| A\phi\|_\infty \|\phi^{-1}v_{t-t_0}\|_\infty.
\end{equation}
	Plugging \eqref{eq:firstpart-of-Y} into \eqref{eq:epsilon-2}, using \eqref{eq:Yt0t} and letting $c,\gamma > 0$ be the constants in \eqref{eq:IU}, we have that
\begin{equation}\label{eq:epsilon-2-final}\begin{split}
	|\epsilon_x^2(t_0,t)|
	& \leq t_0\| A\phi\|_\infty \|(\phi^{-1}v_{t-t_0})\|_\infty \dot{\mathbf P}_{\delta_x}[ (Y^{(t_0,t]}_t(\phi))^{-1}|\mathscr F^\xi_{t_0}] \\
	& \leq t_0\|  A \phi\|_\infty\|(\phi^{-1}v_{t-t_0}) \|_\infty \int_{E} \dot{q} (t_0,x,y)(\phi^{-1}v_{t-t_0})(y) m(dy)\\
	& \leq t_0\| A\phi\|_\infty \| \phi^{-1}v_{t-t_0}\|_\infty (1+ce^{-\gamma t_0}) \langle v_{t-t_0},\phi^* \rangle_m.
\end{split}\end{equation}
	Combining \eqref{eq:vt-equation}, \eqref{eq:epsilon-1} and \eqref{eq:epsilon-2-final}, we have that
\begin{equation}\label{vts-inequality}\begin{split}
	\Big|\frac{\phi^{-1}v_t(x)}{\langle v_{t-t_0},\phi^* \rangle_m}-1 \Big|
	&\leq \frac{|\epsilon_x^1(t_0,t)|}{\langle v_{t-t_0},\phi^* \rangle_m} + \frac{|\epsilon_x^2(t_0,t)|}{\langle v_{t-t_0},\phi^* \rangle_m}\\
	&\leq ce^{-\gamma t_0} +t_0\| A\phi\|_\infty \| \phi^{-1}v_{t-t_0}\|_\infty (1+ce^{-\gamma t_0}).
\end{split}\end{equation}
	Since we know from
	Lemma \ref{lem:discuss-of-assumption2'}(5)
	that $\| \phi^{-1}v_t\|_\infty\to 0$ when $t\to\infty$, there exists a map $t\mapsto t_0(t)$ such that,
\[
	t_0(t)
	\xrightarrow[t\to\infty]{} \infty;
	\quad t_0(t)\| \phi^{-1}v_{t-t_0(t)}\|_\infty
	\xrightarrow[t\to\infty]{} 0.
\]
	Plugging this choice of $t_0(t)$ back into \eqref{vts-inequality}, we have that
\begin{equation}\label{eq:k1}
	\sup_{x\in E}\Big|\frac{\phi^{-1}v_t(x)}{\langle v_{t-t_0(t)},\phi^* \rangle_m}-1 \Big|
	\xrightarrow[t\to\infty]{} 0.
\end{equation}
	Now notice that
\begin{equation}\label{eq:k2}\begin{split}
	\Big |\frac {\langle v_t, \phi^*\rangle_m} {\langle v_{t-t_0(t)} , \phi^*\rangle_m} - 1 \Big |
	&\leq \int \Big | \frac{\phi^{-1}v_t(x)}{\langle v_{t-t_0(t)} , \phi^*\rangle} - 1 \Big| \phi \phi^*(x) m(dx)\\
	&\leq \sup_{x\in E}\Big|\frac{\phi^{-1}v_t(x)}{\langle v_{t-t_0(t)},\phi^* \rangle_m}-1 \Big|
	\xrightarrow[t\to\infty]{} 0.
\end{split}\end{equation}
	Finally, by \eqref{eq:k1}, \eqref{eq:k2} and the property of uniform convergence,
\[
	\sup_{x\in E}\Big|\frac{\phi^{-1}v_t(x)}{\langle v_{t},\phi^* \rangle_m}-1 \Big|
	\xrightarrow[t\to\infty]{} 0,
\]
	as desired.
\end{proof}
\begin{lem}\label{lem:Kolmogorov-2}
	Under Assumptions \ref{asp:1'}, \ref{asp:2'} and \ref{asp:3}, we have
\[
	\frac{1}{t\langle v_t,\phi^*\rangle_m}
	\xrightarrow[t\to\infty]{} \frac{1}{2}\langle  A\phi,\phi\phi^*\rangle_m.
\]
\end{lem}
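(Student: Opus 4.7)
Set $h(t) := \langle v_t,\phi^*\rangle_m$ and $c := \tfrac{1}{2}\langle A\phi,\phi\phi^*\rangle_m$. The plan is to extract an ODE-type identity for $1/h$ from equation \eqref{eq:extinction-vt-phi}, show that the integrand on its right-hand side tends to the constant $c$, and conclude by Cesaro averaging.

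First, by Lemma \ref{lem:discuss-of-assumption2'} and \eqref{eq:extinction-vt-phi}, for every $t \ge s \ge s_0$,
\[
h(t) = h(s) - \int_s^t \langle \Psi_0(v_r),\phi^*\rangle_m\, dr.
\]
Since we excluded the degenerate case $\Psi_0 \equiv 0$, $v_r$ is strictly positive on $E$ for every $r>0$, so $h$ is strictly positive, absolutely continuous and nonincreasing on $[s_0,\infty)$. Therefore
\[
\frac{1}{h(t)} - \frac{1}{h(s_0)} = \int_{s_0}^t \frac{\langle \Psi_0(v_r),\phi^*\rangle_m}{h(r)^2}\, dr, \quad t\geq s_0.
\]

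Second, I will show that $\langle \Psi_0(v_r),\phi^*\rangle_m / h(r)^2 \to c$ as $r \to \infty$. Set $\rho(r,x) := v_r(x)/[\phi(x) h(r)]$; by Lemma \ref{lem:Kolmogorov-1}, $\sup_x|\rho(r,x)-1| \to 0$, so $\rho(r,\cdot)^2 \le 4$ for $r$ large enough. With $g(u) := 2(e^{-u}-1+u)/u^2$, which is a $[0,1]$-valued function on $(0,\infty)$ with $g(u) \to 1$ as $u \to 0^+$, one writes
\[
\frac{\Psi_0(v_r)(x)}{h(r)^2} = \alpha(x)\phi(x)^2 \rho(r,x)^2 + \tfrac{1}{2}\phi(x)^2\rho(r,x)^2 \int_{(0,\infty)} g(v_r(x)y)\, y^2\, \pi(x,dy).
\]
By Lemma \ref{lem:discuss-of-assumption2'}(5), $v_r(x) \to 0$ for every $x\in E$, and Assumption \ref{asp:3} gives $\int y^2\pi(x,dy) \le A(x) < \infty$ pointwise, so dominated convergence applied to the inner integral yields the pointwise limit $\tfrac{1}{2}A(x)\phi(x)^2$. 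The entire integrand is bounded above by $\tfrac{1}{2}A(x)\phi(x)^2 \cdot 4 \le 2\|A\phi\|_\infty\, \phi(x)$, which is integrable against $\phi^*(x)m(dx)$ because $\langle \phi,\phi^*\rangle_m = 1$. A second application of dominated convergence then gives
\[
\frac{\langle \Psi_0(v_r),\phi^*\rangle_m}{h(r)^2} \;\longrightarrow\; \int_E \tfrac{1}{2} A(x)\phi(x)^2 \phi^*(x)\, m(dx) \;=\; c.
\]

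Finally, dividing the identity in the first step by $t$ gives
$\tfrac{1}{t\, h(t)} = o(1) + \tfrac{1}{t}\int_{s_0}^t \tfrac{\langle \Psi_0(v_r),\phi^*\rangle_m}{h(r)^2}\, dr$, and the Cesaro mean of a function tending to $c$ also tends to $c$, completing the proof. I expect the main obstacle to lie in the second step: the uniform domination must be obtained from the boundedness of $A\phi$ rather than of $A$ itself (since $\int y^2\pi(x,dy)$ may be unbounded as a function of $x$), and it is the combination of the bound $\tfrac{1}{2}A(x)\phi(x)^2 \le \tfrac{1}{2}\|A\phi\|_\infty\phi(x)$ with the uniform convergence $\rho(r,\cdot)\to 1$ from Lemma \ref{lem:Kolmogorov-1} and the integrability $\phi\phi^*\in L^1(m)$ that makes the dominated convergence legitimate.
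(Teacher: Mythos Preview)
Your proof is correct and follows essentially the same route as the paper's: both derive that $(1/h)'=\langle\Psi_0(v_r),\phi^*\rangle_m/h(r)^2$, show this quantity converges to $c$ via Lemma~\ref{lem:Kolmogorov-1} together with the bound $A\phi\in L^\infty$, and conclude by integrating and dividing by $t$. Your packaging via the function $g(u)=2(e^{-u}-1+u)/u^2$ and the direct use of absolute continuity is a bit cleaner than the paper's decomposition $\Psi_0=\tfrac12Az^2+R$ and its separate treatment of the differentiability set $C$, but the substance is the same. One small quibble: the strict positivity of $h$ does not follow from ``$\Psi_0\not\equiv 0$''; rather, $h(t)=-\log\mathbf P_\nu(X_t=\mathbf 0)>0$ because $\mathbf P_\nu[X_t(\phi)]=\langle\nu,\phi\rangle=1>0$ forces $\mathbf P_\nu(X_t=\mathbf 0)<1$.
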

\begin{proof}
	We use an argument similar to that used in \cite{Powell2016An-invariance} for critical branching diffusions.
	According to \cite{RenSongZhang2015Limit}, we have that, for any $x\in E$ and $z\geq 0$,
\[\begin{split}
	R(x,z)
	:=\psi_0(x,z)-\frac{1}{2} A(x)z^2
	\leq e(x,z)z^2,
\end{split}\]
	where
\[
	e(x,z)
	:=\int_{(0,\infty)}y^2\big(1\wedge \frac{1}{6}yz\big)\pi(x,dy)
	\leq  A(x).	
\]
	By monotonicity, we have that
\begin{equation}\label{equation:reason2}
	e(x,z)
	\xrightarrow[z\to 0]{} 0,
	\quad x\in E.
\end{equation}
	Taking $b(t):=\langle v_t,\phi^*\rangle_m$ and writing $l_t(x):=v_t(x)-b(t)\phi(x)$, Lemma \ref{lem:Kolmogorov-1} says that,
\begin{equation}\label{equation:reason1}
	\sup_{x\in E}\Big|\frac{l_t(x)}{b(t)\phi(x)}\Big|
	\xrightarrow[t\to\infty]{} 0.
\end{equation}
	Now, taking $s_0>0$ as in \eqref{eq:extinction-vt-phi},
	we have that $t\mapsto b(t)$ is differentiable on the set
\[
	C
	=\{t> s_0: \text{the function}~ t \mapsto \langle\Psi_0(v_t),\phi^*\rangle_m~ \text{is continuous at}~ t \}
\]
	and that
\begin{equation}\label{equation:b(t)}\begin{split}
	\frac{d}{dt}b(t)
	&= -\langle\Psi_0(v_t),\phi^*\rangle_m
	= -\big\langle \frac{1}{2} A \cdot v_t^2+R (\cdot,v_t(\cdot)),\phi^*\big\rangle_m \\
	&= -\big\langle \frac{1}{2}  A \cdot \big(b(t)\phi+l_t\big)^2+R (\cdot,v_t(\cdot)),\phi^*\big\rangle_m\\
	&= -b(t)^2\big[\frac{1}{2} \langle  A\phi,\phi \phi^*\rangle_m+g(t)\big],
	\quad t\in C,
\end{split}\end{equation}	
	where
\[\begin{split}
	g(t)
	&= \Big\langle \frac{l_t}{b(t) \phi}, A\phi^2\phi^*\Big\rangle_m + \frac{1}{2}\Big\langle \Big(\frac{l_t}{b(t) \phi}\Big)^2, A\phi^2\phi^*\Big\rangle_m + \Big\langle \frac{R(\cdot,v_t(\cdot))}{b(t)^2 \phi^2},\phi^2\phi^*\Big\rangle_m \\
	&=: g_1(t) + g_2(t) + g_3(t).
\end{split}\]
	From \eqref{equation:reason1}, we have $g_1(t)\to 0$ and $g_2(t)\to 0$ as $t\to\infty$.
	From
\[\begin{split}
	\frac{R(x,v_t(x))}{b(t)^2 \phi(x)^2}
	\leq \frac{e(x,v_t(x))\cdot v_t(x)^2}{ b(t)^2\phi(x)^2}
	= e(x,v_t(x))\Big(1+\frac{l_t(x)}{ b(t) \phi(x)}\Big)^2,
\end{split}\]
	using \eqref{equation:reason1}, \eqref{equation:reason2}, Lemma \ref{lem:discuss-of-assumption2'} (5) and the dominated convergence theorem 
	($e(x,v_t(x))$ is dominated by $ A(x)$), we conclude that $g_3(t)\to 0$ as $t\to\infty$.
\par
	Finally, from \eqref{equation:b(t)} we can write
\begin{equation}\label{eq:derivative-of-b(t)-1}
	\frac{d}{dt} \Big(\frac{1}{b(t)}\Big)
	= -\frac{d b(t)}{b(t)^2dt}
	= \frac{1}{2}\langle  A\phi,\phi\phi^*\rangle_m + g(t),
	\quad t\in C.
\end{equation}
	Notice that, since the function $t\mapsto \langle\Psi_0(v_t),\phi^*\rangle_m$ is non-increasing in $t$, 
		the complement of $C$ has at most countably many elements.
	Therefore, using \eqref{eq:extinction-vt-phi} and \eqref{eq:derivative-of-b(t)-1}, one can verify that $t\mapsto \frac{1}{b(t)}$ is absolutely continuous on the interval $[s_0,t_0]$ as long as $s_0$ and $t_0$ are large enough.
	This allows us to integrate \eqref{eq:derivative-of-b(t)-1} on the interval $[s_0,t_0]$ with respect to the Lebesgue measure, and get that
\[
	\frac{1}{b(t_0)}
	= \frac{1}{b(s_0)} + \frac{1}{2}\langle A\phi,\phi\phi^*\rangle_m(t_0-s_0) + \int_{s_0}^{t_0} g(s)ds,
	\quad \text{for } 0\leq s_0\leq t_0 \text{ large enough}.
\]
	Dividing by $t_0$ and letting $t_0\to\infty$ in the above equation, we have
\[
	\frac{1}{b(t)t}
	\xrightarrow[t\to\infty]{} \frac{1}{2}\langle  A\phi,\phi\phi^*\rangle_m
\]
	as desired.
\end{proof}
\begin{proof}[Proof of Theorem \ref{thm:Kolmogorov-type-of-theorem}]
	For $\mu \in \mathcal M^\phi_f$, from Lemma \ref{lem:discuss-of-assumption2'}.(5) we know that
\begin{equation}\label{eq:kol-1}
	\langle \mu ,v_t\rangle
    = \int_E v_t(x) \mu(dx)
	= \int_E \frac{v_t(x)}{\phi(x)} \phi(x)\mu(dx)
	\xrightarrow[t\to\infty]{} 0.
\end{equation}
	From Lemma \ref{lem:Kolmogorov-1} we know that
\begin{equation}\label{eq:kol-2}
     \frac {\langle\mu, v_t\rangle}{ \langle v_t,\phi^*\rangle_m}
	= \int_E \frac{v_t(x)}{\langle v_t, \phi^* \rangle_m \phi(x)}\phi(x)\mu(dx)
	\xrightarrow[t\to\infty]{} \langle \mu,\phi\rangle.
\end{equation}
	It then follows from \eqref{eq:kol-1}, \eqref{eq:kol-2} and Lemma \ref{lem:Kolmogorov-2} that
\[\begin{split}
	t\mathbf P_{\mu}(X_t\neq \mathbf 0)
	&= t (1-e^{-\langle \mu, v_t \rangle})
	= t \langle v_t,\phi^*\rangle\frac{\langle \mu,v_t\rangle }{\langle v_t,\phi^*\rangle_m} \frac {1-e^{-\langle \mu, v_t \rangle}} {\langle \mu, v_t \rangle}\\
	&\xrightarrow[t\to\infty]{} \frac{\langle \mu,\phi\rangle} {\frac{1}{2}\langle  A \phi,\phi \phi^*\rangle_m},
	\quad x\in E.
\end{split}\]
\end{proof}

\subsection{Yaglom type result}
	Let $\{(X_t)_{t\geq 0}; (\mathbf P_\mu)_{\mu \in \mathcal M_f}\}$ be the $(\xi,\psi)$-superprocess introduced in Subsection \ref{sec: Main results} which satisfies Assumptions \ref{asp:1'} and \ref{asp:2'} and \ref{asp:3}.
	In this subsection, we will give a proof of Theorem \ref{thm:Yaglom-type-theorem}.

Slutsky's theorem is used quite often to  prove convergence in law of two components, in which one contributes to the limit, and the other one is negligible. The following proposition says that under $\dot {\mathbf P}_\mu$, 
the weighted  mass  $Y_t(\phi)$ coming off spine, normalized by $t$, converges to a Gamma distribution as $t\to\infty$.

\begin{prop}
\label{prop:yaglTheorSpinImmigr}
	Suppose that Assumptions \ref{asp:1'}, \ref{asp:2'}  and \ref{asp:3} hold.
	Suppose that $\mu \in \mathcal M_f^\phi$. Let $\{(\xi_t)_{t\geq 0}, (Y_t)_{t\geq 0}, \mathbf n; \dot {\mathbf P}_\mu\}$ be the spine immigration given by Theorem \ref{thm:spinDec}.
Then $W_t:= \frac{Y_t(\phi)}{t}$ converges weakly to a Gamma distribution
	$\Gamma(2,c_0^{-1})$ with $c_0 := \frac {1} {2} \langle \phi  A, \phi \phi^* \rangle_m$.
\end{prop}
\begin{proof}
We only have to prove that
\[
	\dot{\mathbf P}_\mu[e^{-\theta W_t}]
	\xrightarrow[t\to\infty]{}\frac{1}{(1 + c_0 \theta)^2},
	\quad \theta \geq 0,\mu\in \mathcal M^\phi_f.
\]
 First we consider the case when $\mu = \delta_x$ for an arbitrary $x\in E$.
	To simplify  notation, for all $x\in E,\theta\geq 0$ and $t\geq 0$, we write
\[\begin{split}
	J(x,\theta,t)
&:=(\phi A)(x)\dot{\mathbf P}_{\delta_{x}}[e^{-\theta W_t}]\widetilde{\mathbf P}_{x}[e^{-X_t(\frac{\theta\phi}{t})}],\\
	J_0(x,\theta,t)
&:=(\phi A)(x)\dot{\mathbf P}_{\delta_x}[e^{-\theta W_t}]
\end{split}\]
	and
\[\begin{split}
	M(x,\theta,t)
	&:=\Big|\frac{1}{(1+c_0\theta)^2}-\dot{\mathbf P}_{\delta_x}[e^{-\theta W_t}]\Big|.
\end{split}\]
	\emph{Step 1.  We will show that}
\begin{equation}\label{eq:yaglTheorSpinImmigrStep1}
	\dot{\mathbf P}_{\delta_x}[e^{-\theta W_t}]
	=\dot{\mathbf P}_{\delta_x}[ e^{-\int_0^1 du\int_0^\theta d\rho\cdot J(\xi_{ut},\rho(1-u),t(1-u))} ].
\end{equation}
	In fact, we have
\[
	\frac{\partial}{\partial \theta}\dot{\mathbf P}_{\delta_x}[e^{-\theta W_t}|\xi]
	= -\dot{\mathbf P}_{\delta_x}[W_te^{-\theta W_t}|\xi],
	\quad t\geq 0,\theta \geq 0.
\]
	Applying Lemma \ref{lem:key-lemma} with $K(dr)=\delta_t(dr)$ and $f_t=\frac{\theta\phi}{t}$, for each $\theta \geq 0$, we have
\[\begin{split}
	-\frac{\partial}{\partial \theta}\log \dot{\mathbf P}_{\delta_x}[e^{-\theta W_t}|\xi]
	&=\frac{\dot{\mathbf P}_{\delta_x} [W_t e^{-\theta W_t}|\xi]}{\dot{\mathbf P}_{\delta_x}[e^{-\theta W_t}|\xi]}\\
	&=\frac{1}{t}\int_0^t  (A\phi)(\xi_s)\dot{\mathbf P}_{\delta_{\xi_s}}[e^{-(\theta \frac{t-s}{t})W_{t-s}}]\widetilde{\mathbf P}_{\xi_s}[e^{-X_{t-s}(\frac{\theta\phi}{t})}]ds\\
	&=\int_0^1 J(\xi_{ut},\theta(1-u),t(1-u)) du.
\end{split}\]
	Integrating both sides of the above equation yields that
\[
	-\log \dot{\mathbf P}_{\delta_x}[e^{-\theta W_t}|\xi]
	=\int_0^1 du\int_0^\theta J(\xi_{ut},\rho(1-u),t(1-u)) d\rho,
\]
	which implies \eqref{eq:yaglTheorSpinImmigrStep1}.
\par
	\emph{Step 2. We will show that}
\begin{equation}\label{eq:yaglTheorSpinImmigrStep2}
	\int_0^1 du\int_0^\theta (J_0-J)(\xi_{ut},\rho(1-u),t(1-u)) d\rho
	\xrightarrow[t\to\infty]{L^2(\dot{\mathbf P}_{\delta_x})} 0,\quad \theta\geq 0.
\end{equation}
	To get this result, we will apply Lemma \ref{lem:ergodic} with
\begin{equation}\label{eq:F}\begin{split}
	F(x,u,t)
	&:=\int_0^\theta d\rho\cdot (J_0-J)(x,\rho(1-u),t(1-u))\\
	&=\int_0^\theta d\rho\cdot  (A\phi)(x)\dot{\mathbf P}_{\delta_{x}}[e^{-\rho(1-u)W_{t(1-u)}}]\widetilde{\mathbf P}_{x}[1-e^{-X_{t(1-u)}(\frac{\rho\phi}{t})}].
\end{split}\end{equation}
	Firstly note that $F(x,u,t)$ is bounded by $\theta\|\phi A\|_\infty$ on $E\times[0,1]\times[0,\infty)$.
	Secondly note that $F(x,u,t)\xrightarrow[t\to\infty]{} 0$ for each $x\in E$ and $u\in[0,1]$, since $|J_0-J|$ is bounded by $\|\phi A\|_\infty$ and
\[\begin{split}
	\big|(J_0-J)(x,\theta,t)\big|
	&=  (A\phi)(x)\dot{\mathbf P}_{\delta_{x}}[e^{-\theta W_{t}}]\widetilde{\mathbf P}_{x}[1-e^{-X_t(\frac{\theta\phi}{t})}]\\
	&\leq  (A\phi)(x)\widetilde{\mathbf P}_{x}(X_t\neq \mathbf 0) \\
	&=  (A\phi)(x)\frac{2\alpha(x)\mathbf P_{\mathbf 0}(X_t\neq \mathbf 0)+\int_{(0,\infty)}y^2\mathbf P_{y\delta_x}(X_t\neq \mathbf 0)\pi(x,dy)}{2\alpha(x)+\int_{(0,\infty)}y^2\pi(x,dy)}\\
	&\xrightarrow[t\to\infty]{} 0, \quad x\in E,\theta\geq 0.
\end{split}\]
	Therefore, we can apply Lemma \ref{lem:ergodic} with $F(x,u,t)$ given by \eqref{eq:F}, and get \eqref{eq:yaglTheorSpinImmigrStep2}.
\par
	\emph{Step 3. We will show that}
\begin{equation}\label{eq:yaglTheorSpinImmigrStep3}
	\frac{1}{(1+c_0\theta)^2}
	= \lim_{t\to\infty} \dot {\mathbf P}_{\delta_x} \Big[e^{- \int_0^1 du \int_0^\theta d\rho \frac{ (A\phi)(\xi_{ut})}{(1+c_0\rho(1-u))^2} }\Big],
	\quad \theta\geq 0.
\end{equation}
	By elementary calculus, the following map
\[
	(x,u)
	\mapsto\int_0^\theta \frac{ (A\phi)(x)}{(1+c_0\rho(1-u))^2} d\rho
	= \frac{ (A\phi)(x)\theta}{1+c_0\theta(1-u)}
\]
	is bounded by $\theta\| A\phi\|_\infty$ on $E\times[0,1]$.
	According to Lemma \ref{lem:ergodic}, we have that
\[\begin{split}
	\int_0^1 du \int_0^\theta \frac{ (A\phi)(\xi_{ut})}{\big(1+c_0\rho(1-u)\big)^2} d\rho
	&\xrightarrow[t\to\infty]{L^2(\dot{\mathbf P}_{\delta_x})} \int_0^1 \big\langle \frac{\theta A\phi}{1+c_0\theta(1-u)},\phi\phi^* \big\rangle_m du\\
	&= \langle  A\phi,\phi\phi^*\rangle_m\int_0^1 \frac{\theta}{1+c_0\theta(1-u)}du \\
	&=2\log(1+c_0\theta).
\end{split}\]
	Therefore, by the bounded convergence theorem, we get \eqref{eq:yaglTheorSpinImmigrStep3}.
\par
	\emph{Step 4. We will show that}
\begin{equation}\label{eq:yaglTheorSpinImmigrStep4}
	M(x,\theta)
	:=\limsup_{t\to\infty}M(x,\theta,t)=0,
	\quad x\in E,\theta\geq 0.
\end{equation}
	In fact,
\begin{equation}\label{eq:separate-M-into-3-parts}
	M(x,\theta,t)
	\leq I_1+I_2+I_3,
\end{equation}
	where
\[
	I_1
	:=\Big|\frac{1}{(1+c_0\theta)^2}- \dot {\mathbf P}_{\delta_x} \big[e^{- \int_0^1 du \int_0^\theta \frac{ (A\phi)(\xi_{ut})}{[1+c_0\rho(1-u)]^2}d\rho }\big]\Big|
	\xrightarrow[t\to\infty]{\text{by \eqref{eq:yaglTheorSpinImmigrStep3}}} 0,
\]
\[\begin{split}
	I_2
	&:=\Big| \dot {\mathbf P}_{\delta_x} [e^{- \int_0^1 du \int_0^\theta  \frac{ (A\phi)(\xi_{ut})}{(1+c_0\rho(1-u))^2} d\rho}]-\dot{\mathbf P}_{\delta_x}[e^{-\int_0^1 du\int_0^{\theta} J_0(\xi_{ut},\rho(1-u),t(1-u)) d\rho}]\Big|\\
	&\leq\dot{\mathbf P}_{\delta_x}\Big[\int_0^1du\int_0^\theta (A\phi)(\xi_{ut})M(\xi_{ut},\rho(1-u),t(1-u))d\rho \Big]\\
	&=\int_0^1du\int_0^\theta d\rho \int_{E} \dot{q}(ut,x,y) (A\phi)(y)M(y,\rho(1-u),t(1-u)) m(dy),
\end{split}\]
	and by \eqref{eq:yaglTheorSpinImmigrStep1} and \eqref{eq:yaglTheorSpinImmigrStep2},
\[\begin{split}
	I_3
	&:=\big|\dot{\mathbf P}_{\delta_x}[e^{-\int_0^1 du\int_0^{\theta} J_0(\xi_{ut},\rho(1-u),t(1-u)) d\rho }]-\dot{\mathbf P}_{\delta_x}[e^{-\theta W_t}]\big|\\
	&=\big|\dot{\mathbf P}_{\delta_x}[e^{-\int_0^1 du\int_0^{\theta} J_0(\xi_{ut},\rho(1-u),t(1-u)) d\rho }]-\dot{\mathbf P}_{\delta_x}[e^{-\int_0^1 du\int_0^{\theta} J(\xi_{ut},\rho(1-u),t(1-u)) d\rho }]\big|\\
	&\leq \dot{\mathbf P}_{\delta_x}\Big[\Big|\int_0^1du\int_0^\theta (J_0-J)(\xi_{ut},\rho(1-u),t(1-u))d\rho\Big|\Big]
	\xrightarrow[t\to\infty]{} 0.
\end{split}\]
	Therefore, taking $\limsup_{t\to\infty}$ in \eqref{eq:separate-M-into-3-parts}, by the reverse Fatou's lemma, we get
\begin{equation}\label{eq:the-key-inequiality}
	M(x,\theta)
	\leq \int_0^1du\int_0^\theta \langle  A\phi M(\cdot,\rho(1-u)),\phi\phi^*\rangle_m d\rho,
	\quad x\in E,\theta\geq 0.
\end{equation}
	Integrating with respect to the finite measure $(A\phi\phi\phi^*)(x)m(dx)$ yields that
\[
	\langle  A\phi M(\cdot,\theta),\phi\phi^*\rangle_m
	\leq \langle A\phi,\phi\phi^*\rangle_m\int_0^1du\int_0^\theta \langle  A\phi M(\cdot,\rho(1-u)),\phi\phi^*\rangle_m d\rho,
	\quad \theta\geq 0.
\]
	According to \cite[Lemma 3.1]{RenSongSun2018A-2-spine}, this inequality implies that $\langle A\phi M(\cdot,\theta),$ $\phi\phi^*\rangle_m$ $ =0$ for each $ \theta\geq 0$.
	This and \eqref{eq:the-key-inequiality} imply \eqref{eq:yaglTheorSpinImmigrStep4}, which completes the proof when $\mu=\delta_x$.
\par
	Finally, for any $\mu \in \mathcal M^\phi_f$, since
\[\begin{split}
	\langle \mu,\phi\rangle\dot {\mathbf P}_\mu [e^{-\theta W_t}]
	&=\langle\mu,\phi\rangle\mathbb N^{w_t(\phi)}_\mu[e^{-\theta \frac{w_t(\phi)}{t}}]
	=\mathbb N_\mu [w_t(\phi)e^{-\theta \frac{w_t(\phi)}{t}}]\\
	&=\int_E\mu(dx)\mathbb N_x[w_t(\phi)e^{-\theta\frac{w_t(\phi)}{t}}]
	=\int_E \mu(dx) \phi(x) \dot{\mathbf P}_{\delta_x}[e^{-\theta W_t}],
\end{split}\]
	we have that, by the bounded convergence theorem,
\[
	\big|\dot {\mathbf P}_\mu[e^{-\theta W_t}] - \frac{1}{(1+c_0\theta)^2}\big|
	\leq \int_E \big|\dot {\mathbf P}_{\delta_x}[e^{-\theta W_t}]-\frac{1}{(1+c_0\theta)^2}\big| \frac{\phi(x)\mu(dx)}{\langle\mu,\phi\rangle}
	\xrightarrow[t\to\infty]{} 0,
\]
	as desired.	
\end{proof}

The following lemma says that, conditional on survival up to time $t$, the weighted and normalized mass $t^{-1}X_t(\phi)$ 
(weighted  by $\phi$, and normalized  by $t$) 
has a limit distribution which is exponential with explicit parameter. Later we will consider limit of 
$t^{-1}X_t(f)$  with a general $f\in bp\mathscr B^\phi_E$.

\begin{lem}\label{lem:Yaglom-for-phi}
	Suppose that Assumptions \ref{asp:1'}, \ref{asp:2'} and \ref{asp:3} hold. Let $\mu \in \mathcal M_f^\phi$.
	Then it holds that $\{t^{-1}X_t(\phi);\mathbf P_\mu(\cdot | X_t\neq\mathbf 0)\}$
   converges weakly to 
   an exponential distribution
	$\operatorname{Exp}(c_0^{-1})$ with $c_0 := \frac{1}{2}\langle\phi A,\phi\phi^*\rangle_m$.
\end{lem}

\begin{proof}
 We only have to show that
\[\begin{split}
	\mathbf P_\mu[e^{-\theta t^{-1}X_t(\phi)} |X_t\neq\mathbf 0]
	&\xrightarrow[t\to\infty]{}\frac{1}{1+c_0\theta},
	\quad \theta\geq 0, \mu\in \mathcal M_f^\phi.
\end{split}\]
	Notice that, by Lemma \ref{lem:discuss-of-assumption2'}(6), we have
\[
	\{t^{-1}X_t(\phi);\mathbf P_\mu\}
	\xrightarrow[t\to\infty]{law} 0.
\]
	Therefore, by Theorem \ref{thm:spinDec} and Proposition \ref{prop:yaglTheorSpinImmigr}, we have
\[
	\mathbf P^M_\mu[e^{-\theta t^{-1}X_t(\phi)}]
	= (\mathbf P_\mu \otimes \dot {\mathbf P}_\mu)[e^{-\theta t^{-1}(X_t+Y_t)(\phi)}]
	\xrightarrow[t\to\infty]{}\frac{1}{(1+c_0\theta)^2}.
\]
	Also notice that, by elementary calculus
\[
	\frac{1-e^{-\theta u}}{u}
	=\int_0^\theta e^{-\rho u} d\rho,
	\quad u> 0.
\]
	From Theorem \ref{thm:spinDec} and Lemma \ref{lem:Y-is-immortal} we know that $\mathbf P^M_\mu(X_t=\mathbf 0)=0$.
	Therefore by the bounded convergence theorem, we have
\[\begin{split}
	\mathbf P^M_\mu \Big[\frac{1-e^{-\theta t^{-1}X_t(\phi)}}{t^{-1}X_t(\phi)}\Big]
	&= \mathbf P^M_\mu\Big[\int_0^\theta e^{-\rho t^{-1}X_t(\phi) }d\rho\Big]
	= \int_0^\theta \mathbf P^M_\mu [ e^{-\rho t^{-1}X_t(\phi) }] d\rho\\
	&\xrightarrow[t\to\infty]{} \int_0^\theta \frac{1}{(1+c_0\rho)^2}d\rho
	= c_0^{-1} (1-\frac{1}{1+c_0\theta}).
\end{split}\]
	Hence, by Theorem \ref{thm:Kolmogorov-type-of-theorem} we have
\[\begin{split}
	\mathbf P_\mu[1-e^{-\theta t^{-1}X_t(\phi)} |X_t\neq \mathbf 0]
	&= \mathbf P_\mu(X_t\neq \mathbf 0)^{-1}\mathbf P_\mu[(1-e^{-\theta t^{-1}X_t(\phi)})\mathbf 1_{X\neq \mathbf 0}]\\
	&= \mathbf P_\mu(X_t\neq \mathbf 0)^{-1}\mathbf P_\mu\Big[(1-e^{-\theta t^{-1}X_t(\phi)})\frac{X_t(\phi)}{X_t(\phi)}\Big]\\
	&= (t\mathbf P_\mu(X_t\neq \mathbf 0))^{-1} \langle\mu,\phi\rangle \mathbf P^M_\mu \Big[\frac{1-e^{-\theta t^{-1}X_t(\phi)}}{t^{-1}X_t(\phi)}\Big]\\
	&\xrightarrow[t\to\infty]{}1-\frac{1}{1+c_0\theta},
\end{split}\]
	which completes the proof.
\end{proof}

Now we consider  limit of $t^{-1} X_t(f)$ with general weight $f\in bp\mathscr B^\phi_E$ . The main idea is to use the following decomposition for $f$:
$f(x)=\langle \phi^*,f\rangle_m \phi(x)+\tilde f(x), x\in E$. The following lemma says that $\tilde f$ has no contribution to the limit, and then we can easily get that the conditional limit of $t^{-1} X_t(f)$ as $t\to\infty$ is the contribution of $\langle \phi^*,f\rangle_mt^{-1}X_t(\phi)$, which is known from Lemma \ref{lem:Yaglom-for-phi}.

\begin{lem}\label{lem:general-lemma}
	Suppose that Assumptions \ref{asp:1'}, \ref{asp:2'} and \ref{asp:3} hold.
	If $\tilde f\in b\mathscr B^\phi_E$ satisfies $\langle \tilde f, \phi^*\rangle = 0$, then we have, for any $\mu \in \mathcal M^\phi_f$,
\[\begin{split}
	\big\{ t^{-1} X_t(\tilde f) ; \mathbf P_\mu(\cdot|X_t \neq \mathbf 0)\big\}
	&\xrightarrow[t\to\infty]{} 0,
	\quad\text{in probability}.
\end{split}\]
\end{lem}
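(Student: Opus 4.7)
The plan is to combine a second-moment bound (via Proposition \ref{prop:covanrance}) with the Kolmogorov-type result (Theorem \ref{thm:Kolmogorov-type-of-theorem}) and Chebyshev's inequality. The key intuition is that the orthogonality condition $\langle \tilde f,\phi^*\rangle_m = 0$ makes $\tilde f$ invisible to the principal eigenfunction, so intrinsic ultracontractivity \eqref{eq:IU} forces $\phi^{-1}S_t\tilde f$ to decay exponentially. This decay, propagated through the covariance formula, will be strong enough to beat the $O(t^{-1})$ survival probability.

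First, I would apply Proposition \ref{prop:covanrance} with $g = f = \tilde f$ to write
\[
\mathbf P_\mu[X_t(\tilde f)^2] = \langle\mu, S_t\tilde f\rangle^2 + \langle\mu,\phi\rangle\,\dot{\mathbb P}_{\mu\phi}\Big[(\phi^{-1}\tilde f)(\xi_t)\int_0^t (A S_{t-s}\tilde f)(\xi_s)\,ds\Big].
\]
Using \eqref{eq:IU} and $\langle\tilde f,\phi^*\rangle_m=0$, for $t>1$ I have
\[
|S_t\tilde f(x)| = \Big|\int_E\big[q(t,x,y)-\phi(x)\phi^*(y)\big]\tilde f(y)m(dy)\Big| \leq c\,e^{-\gamma t}\phi(x)\,\|\phi^{-1}\tilde f\|_\infty,
\]
so the mean term $\langle\mu,S_t\tilde f\rangle^2$ decays like $e^{-2\gamma t}$. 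For the covariance term, combining the same exponential bound for $t-s>1$ with the crude estimate $|S_{t-s}\tilde f|\leq \phi\,\|\phi^{-1}\tilde f\|_\infty$ for $t-s\leq 1$, and using Assumption \ref{asp:3} to bound $A\phi$, I obtain
\[
|A S_{t-s}\tilde f(x)| \leq \|A\phi\|_\infty\,\|\phi^{-1}\tilde f\|_\infty\,\min\{1, c\,e^{-\gamma(t-s)}\},
\]
whose integral in $s$ over $[0,t]$ is uniformly bounded in $t$ by $1+c/\gamma$. Bounding $(\phi^{-1}\tilde f)(\xi_t)$ by $\|\phi^{-1}\tilde f\|_\infty$ on the outside and integrating, the covariance term is $O(1)$ uniformly in $t$. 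Hence $\mathbf P_\mu[X_t(\tilde f)^2]=O(1)$.

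Finally, Theorem \ref{thm:Kolmogorov-type-of-theorem} gives $\mathbf P_\mu(X_t\neq\mathbf 0)\sim Ct^{-1}$ for some $C>0$, so Chebyshev's inequality yields
\[
\mathbf P_\mu\!\big(|t^{-1}X_t(\tilde f)|\geq\epsilon\,\big|\,X_t\neq\mathbf 0\big)
\leq \frac{\mathbf P_\mu[X_t(\tilde f)^2]}{(\epsilon t)^2\,\mathbf P_\mu(X_t\neq\mathbf 0)} = O(t^{-1})\xrightarrow[t\to\infty]{}0,
\]
which is the desired convergence in probability. The one place where caution is needed is the covariance term: without exploiting the orthogonality $\langle\tilde f,\phi^*\rangle_m=0$ one would only get $\int_0^t \|A\phi\|_\infty\|\phi^{-1}\tilde f\|_\infty\,ds = O(t)$, giving $\mathbf P_\mu[X_t(\tilde f)^2]=O(t)$; dividing by $t^2\cdot Ct^{-1}$ then leaves an $O(1)$ bound, which is useless. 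The exponential decay of $S_{t-s}\tilde f$ forced by $\langle\tilde f,\phi^*\rangle_m=0$ is exactly the ingredient that salvages the argument.
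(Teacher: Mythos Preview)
Your proof is correct and follows the same overall architecture as the paper: Chebyshev's inequality, the second-moment formula from Proposition \ref{prop:covanrance}, and the Kolmogorov-type survival asymptotics from Theorem \ref{thm:Kolmogorov-type-of-theorem}. The one point of difference is how the covariance term is handled. The paper normalizes by $t$ and invokes the ergodic Lemma \ref{lem:ergodic} to show
\[
\frac{1}{t}\int_0^t A S_{t-s}\tilde f(\xi_s)\,ds \xrightarrow[t\to\infty]{L^2(\dot{\mathbb P}_x)} \langle A\phi,\phi\phi^*\rangle_m\,\langle\phi^*,\tilde f\rangle_m = 0,
\]
so that the bracketed expectation tends to $0$ and the conditional second moment is $o(1)$. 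You instead exploit directly the exponential decay $|\phi^{-1}S_r\tilde f|\le ce^{-\gamma r}\|\phi^{-1}\tilde f\|_\infty$ (for $r>1$) forced by \eqref{eq:IU} together with $\langle\tilde f,\phi^*\rangle_m=0$, obtaining the uniform pointwise bound $\int_0^t|A S_{t-s}\tilde f(\xi_s)|\,ds\le \|A\phi\|_\infty\|\phi^{-1}\tilde f\|_\infty(1+c/\gamma)$ and hence $\mathbf P_\mu[X_t(\tilde f)^2]=O(1)$. This bypasses Lemma \ref{lem:ergodic} entirely and even yields a quantitative rate $O(t^{-1})$ for the conditional probability; the paper's route is slightly less sharp here but keeps the argument parallel to the other places where Lemma \ref{lem:ergodic} is used.
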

\begin{proof}
	If we can show that $\mathbf P_\mu\big[\big(t^{-1}X_t(\tilde f)\big)^2|X_t \neq \mathbf 0\big]\xrightarrow[t\to\infty]{} 0$, then the desired result follows by the Chebyshev's inequality
\[
	\mathbf P_\mu \big( | t^{-1} X_t(\tilde f) | \geq \epsilon \big | X_t \neq \mathbf 0\big)
	\leq \epsilon^{-2}\mathbf P_\mu \big[ \big(t^{-1} X_t(\tilde f)\big)^2 \big | X_t \neq \mathbf 0 \big].
\]
	By Proposition \ref{prop:covanrance} we have that
\begin{equation}\label{eq:general-lemma-1}\begin{split}
	&\mathbf P_\mu\big[\big(t^{-1}X_t(\tilde f)\big)^2\big|X_t \neq \mathbf 0\big]
	= t^{-2} \mathbf P_\mu (X_t\neq \mathbf 0)^{-1}\mathbf P_\mu\big[X_t(\tilde f)^2\mathbf 1_{X_t\neq \mathbf 0}\big] \\
	&\quad= t^{-1} \mathbf P_\mu (X_t\neq \mathbf 0)^{-1} \Big(\frac{\langle \mu,S_t\tilde f\rangle^2}{t} + \langle \mu,\phi \rangle\dot{\mathbb P}_{\mu}\big[(\phi^{-1}\tilde f)(\xi_t)\frac{1}{t}
	\int_0^t A(\xi_s)\cdot (S_{t-s} \tilde f)(\xi_s)ds\big]\Big).
\end{split}\end{equation}
	Letting $c,\gamma > 0$ be the constants in \eqref{eq:IU}, we know that
\begin{equation}\label{eq:ll1}\begin{split}
	| S_t\tilde f (x) - \langle \phi^* , \tilde f \rangle_m \phi(x)|
	&= \Big | \int_{E} \big(q(t,x,y) - \phi(x)\phi^*(y)\big) \tilde f (y) m(dy) \Big | \\
	&\leq \int_{E} \big|\frac{q(t,x,y)}{\phi(x)\phi^*(y)} - 1 \big| \cdot |\phi(x)\phi^*(y) \tilde f (y) | m(dy) \\
	& \leq ce^{-\gamma t} \phi(x) \|\phi^{-1}\tilde f\|_\infty \int_{E} (\phi\phi^*)(y) m(dy)\\
	& \xrightarrow[t\to\infty]{} 0,
	\quad x\in E.
\end{split}\end{equation}
Therefore, by the dominated convergence theorem,
\[
	\langle \mu,S_t\tilde f\rangle
	\xrightarrow[t\to \infty]{} \langle \phi^*, \tilde f\rangle_m\langle \mu, \phi\rangle
	= 0.
\]
	Hence,	
\begin{equation}\label{eq:S_t}
	\frac{ \langle \mu,S_t\tilde f\rangle}{t}
	\xrightarrow [t\to\infty]{} 0,
	\quad x\in E.
\end{equation}
	By \eqref{eq:ll1} and Lemma \ref{lem:ergodic}, we know that
\[\begin{split}
\frac{1}{t}\int_0^t A(\xi_s) \cdot (S_{t-s} \tilde f)(\xi_s)ds
	&= \int_0^1 A(\xi_{ut})\cdot (S_{t-ut} \tilde f)(\xi_{ut})d{u}\\
    &\xrightarrow[t\to\infty]{L^2(\dot{\mathbb P}_x)} \int_0^1\langle A\phi,\phi\phi^*\rangle_m \langle\phi^*,\tilde f\rangle_m du
	= 0.
\end{split}\]
	Hence, by Lemma \ref{lem:1stMomSizBiasSupProc} and the bounded convergence theorem we have that
\begin{equation}\label{eq:int}\begin{split}
	&\big| \langle \mu , \phi \rangle \dot{\mathbb P}_{\mu}\big[(\phi^{-1}\tilde f)(\xi_t)\frac{1}{t}\int_0^t A(\xi_s)\cdot (S_{t-s} \tilde f)(\xi_s)ds\big]\big|\\
	&\quad\leq \int \mu(dx)\phi(x) \big|\dot{\mathbb P}_x \big[(\phi^{-1}\tilde f)(\xi_t)\frac{1}{t}\int_0^t A(\xi_s)\cdot (S_{t-s} \tilde f)(\xi_s)ds\big]\big| \\
	&\quad\leq \|\phi^{-1} \tilde f\|_\infty \cdot \int \mu(dx)\phi(x) \dot{\mathbb P}_{x}\Big[ \big| \frac{1}{t}\int_0^t A(\xi_s)\cdot (S_{t-s} \tilde f)(\xi_s)ds \big|^2 \Big]^{\frac{1}{2}}\\
	&\quad \xrightarrow[t\to\infty]{} 0.
\end{split}\end{equation}
	Finally, using  Theorem \ref{thm:Kolmogorov-type-of-theorem} and combining  \eqref{eq:general-lemma-1}, \eqref{eq:S_t} and  \eqref{eq:int},   we have that
\[
	\mathbf P_{\mu}\big[\big(t^{-1}X_t(\tilde f)\big)^2\big|X_t \neq \mathbf 0\big]
	\xrightarrow[t\to\infty]{} 0
\]
	as required.
\end{proof}
\begin{proof}[Proof of Theorem \ref{thm:Yaglom-type-theorem}]
	Define a function $\tilde f$ by
\begin{equation}\label{eq:yy1}
	\tilde f(x)
	:=f(x) - \langle \phi^*,f\rangle_m \phi(x),
	\quad x\in E.
\end{equation}
	It is easy to see that $\tilde f\in b\mathscr B^\phi_E$ and $\langle\tilde f,\phi^* \rangle_m = 0$.
	It then follows from Lemma \ref{lem:Yaglom-for-phi} that
\begin{equation}\label{eq:yy2}
	\big\{ t^{-1}X_t(\langle \phi^*,f\rangle_m \phi);\mathbf P_\mu(\cdot | X_t\neq \mathbf 0)\big\}
	\xrightarrow[t\to\infty]{law} \frac{1}{2}\langle \phi^*,f\rangle_m\langle \phi A, \phi\phi^*\rangle_m \mathbf e,
\end{equation}
	and from Lemma \ref{lem:general-lemma} that
\begin{equation}\label{eq:yy3}
	\big\{ t^{-1} X_t(\tilde f) ; \mathbf P_\mu(\cdot|X_t \neq \mathbf 0)\big\}
	\xrightarrow[t\to\infty]{\text{in probability}}0.
\end{equation}
	The desired result then follows from \eqref{eq:yy1}, \eqref{eq:yy2}, \eqref{eq:yy3} and Slutsky's theorem.
\end{proof}

\begin{rem}
	In the symmetric case, i.e. when $(S_t)$ are self-adjoint operators,  \eqref{eq:yy1} is exactly an $L^2$-orthogonal decomposition.
\end{rem}

\bigskip
\noindent
{\bf Acknowledgments:}
We thank the two referees for very helpful comments on the first version
of this paper.

\end{document}